\newcommand{\R}{{\mathbb R}}
\newcommand{\Z}{{\mathbb Z}}
\newcommand{\C}{{\mathbb C}}
\newcommand{\be}{\begin{eqnarray}}
\newcommand{\ben}{\begin{eqnarray*}}
\newcommand{\en}{\end{eqnarray}}
\newcommand{\enn}{\end{eqnarray*}}
\newcommand{\ba}{\backslash}
\newcommand{\pa}{\partial}
\newcommand{\ov}{\overline}
\newcommand{\Om}{\Omega}
\newcommand{\om}{\omega}
\newcommand{\ra}{\rightarrow}
\newtheorem{theorem}{Theorem}[section]
\newtheorem{lemma}[theorem]{Lemma}
\newtheorem{remark}[theorem]{Remark}
\newtheorem{example}{Example}[]
\definecolor{xxx}{rgb}{0,0,0}
\newcommand{\x}{\color{xxx}}
\definecolor{blau}{rgb}{0,0,0}
\newcommand{\bl}{\color{blau}}
\begin{document}
\renewcommand{\theequation}{\arabic{section}.\arabic{equation}}
\begin{titlepage}

\title{Simultaneously recover two constant coefficients and a polygon with a single pair of Cauchy data for the Helmholtz equation}

\author{Xiaoxu Xu\thanks{School of Mathematics and Statistics, Xi'an Jiaotong University, Xi'an, Shaanxi, 710049, China (\sf{xuxiaoxu@xjtu.edu.cn})}
\and Guanghui Hu\thanks{School of Mathematical Sciences and LPMC, Nankai University, Tianjin 300071, China ({\sf ghhu@nankai.edu.cn}).}}

\date{}
\end{titlepage}
\maketitle
\vspace{.2in}
\begin{abstract}
This paper is concerned with an inverse boundary value problem for the Helmholtz equation over a bounded domain.
The aim is to reconstruct two constant coefficients together with the location and shape of a Dirichlet polygonal obstacle from a single pair of Cauchy data.
Uniqueness results are verified under some a priori assumptions and the one-wave factorization method has been adapted to recover the polygonal obstacle as well as the two coefficients.
{\bl A modified factorization using the Dirichlet-to-Neumann operator is employed to overcome difficulties arising from possible eigenvalues.}
Intensive numerical examples indicate that our method is efficient.

\vspace{.2in} {\bf Keywords}: Inverse boundary value problem, uniqueness, single Cauchy data, coefficient recovery, one-wave factorization method.
\end{abstract}

\section{Introduction}

Suppose that $D\subset\R^2$ is a convex polygon that contained in the interior of a disk $B$.
We consider the following boundary value problem:
\be\label{-1}
\nabla\cdot(\sigma\nabla u)+qu=0 & \text{in}\;B\ba\ov{D},\\ \label{-2}
u=f & \text{on}\;\pa B,\\ \label{-3}
u=0 & \text{on}\;\pa D.
\en
Here, $f$ is called Dirichlet boundary value of $u$ on $\pa B$ and and $D$ is assumed to be a Dirichlet obstacle which means that $u\!=\!0$ on $\pa D$. We restrict ourselves to the special case that both $\sigma\in\C\ba\{0\}$ and $q\in\C$ are constants. In this case,
the boundary value problem (\ref{-1})--(\ref{-3}) can be rewritten as
\be\label{1}
\Delta u+k^2u=0 & \text{in } B\ba\ov{D},\\ \label{2}
u=f & \text{on }\pa B,\\ \label{3}
u=0 & \text{on }\pa D,
\en
where $k=\sqrt{q/\sigma}$ with the branch of the square root being chosen such that ${\rm Im}\,k\geq0$.
The equation (\ref{1}) is well known as the Helmholtz equation.
According to \cite[Chapter 4]{WM} (see also \cite[Section 3.3]{Monk}), the boundary value problem (\ref{1})--(\ref{3}) is uniquely solvable in $H^1(B\backslash\overline{D})$ for all $f\in H^{1/2}(\pa D)$ and $k^2\in(0,+\infty)\ba\mathcal S$, where $\mathcal S\subset (0,+\infty)$ denotes a discrete set of Dirichlet eigenvalue with the only accumulating point $+\infty$ (see \cite[Theorem 5.18]{Cakoni14}).
Note that $k^2$ is called an eigenvalue of the boundary value problem (\ref{1})--(\ref{3}) if (\ref{1})--(\ref{3}) admits a nontrivial solution with $f=0$ on $\pa B$. In this paper, we always assume that $k^2$ is not an eigenvalue of the boundary value problem (\ref{1})--(\ref{3}).

The inverse boundary value problem concerned in this paper is to determine the coefficients $\sigma$, $q$ and the obstacle $D$ from a single Cauchy data $(f,\sigma\pa_\nu u|_{\pa B})$, where $\nu$ denotes the unit normal on $\pa B$ directed into the exterior of $B$.
This is an extension {\bl to the Helmholtz equation} of our previews work \cite{XH24} where an inverse boundary problem for {\bl the Laplace's equation} has been studied.
Analogously, we shall adapt the one-wave factorization method proposed in \cite{EH19,HL,MaHu} for inverse scattering problems to the inverse boundary value problem under question. The original version of the factorization method, proposed by A. Kirsch (see \cite{Kirsch98, Kirsch08}), requires measurement data over all observation and incident directions but does not rely on physical properties of the target. It is based on designing a point-wisely defined indicator function which decides on whether a sampling point lies inside or outside of the target. The one-wave factorization method belongs to the class of domain-defined non-iterative sampling schemes (see \cite{NP2013} for an overview), which does not rely on forward solvers and usually involves synthetic data for testing targets. There are some other non-iterative approaches with a single far-field pattern or Cauchy data, for example, the enclosure method \cite{Ik1998,I99,Ik1999}, the one-wave range test \cite{Lin2021,Lin2023,rangetest,Sun2023}, one-wave non-response test \cite{P2003,Lin2021,Lin2023,Sun2023} and the extended linear sampling method \cite{LS2018}.
The advantages of the one-wave factorization lies in the fact that
it yields a sufficient and necessary condition to characterize a convex obstacle of polygonal/polyhedral type based on the corner singularity analysis \cite{BLS,ElHu2015,hu2018,HL,KS,MaHu}.
The computational criterion from the one-wave factorization looks more explicit and can be easily implemented when sampling disks or straight lines are chosen as testing targets \cite{XH2024}.

The inverse boundary value problem for (\ref{-1}) has been investigated in \cite{Kirsch05}, where {\bl a factorization method using the boundary Neumann-to-Dirichlet operator} has been employed to recover a general interface. In the first part of this paper, we shall establish a factorization scheme for a testing domain which is  supposed to be occupied by either an impedance obstacle or an absorbing medium.
Instead of the Neumann-to-Dirichlet operator used in \cite{Kirsch05} and \cite[Chapter 6]{Kirsch08}, we will establish a framework based on the Dirichlet-to-Neumann operator in this paper (see \cite{XH24}).
The one-wave factorization method is a direct consequence of the factorization scheme for testing domains and the singularity of the solution around a corner point of the obstacle. Both uniqueness results and numerical methods will be considered in our paper.
{\x Promising features of this article include the design of indicator functions for recovering two constant coefficients and an efficient implementation of the one-wave factorization method.}
{\bl Our numerical tests show that, an optimization-based iterative approach with an initial guess given by the result of this method will lead to a more precise inversion result.}

The remaining part of this paper is organized as follows.
Some preliminary results from factorization method based on Dirichlet-to-Neumann operators will be given in Section \ref{s2}.
Sections \ref{s3} and \ref{sec4} are devoted to the uniqueness results and numerical methods of the inverse problem based on a single Cauchy data, respectively.
Numerical examples will be given in Section \ref{s4}.
Finally, a conclusion will be given in Section \ref{s5}.

\section{Factorization of Dirichlet-to-Neumann operators}\label{s2}
\setcounter{equation}{0}

In order to reconstruct the coefficients $\sigma$, $q$ and the inclusion $D$ from a single Cauchy data, we will apply the one-wave factorization method developed in \cite{MaHu}.
To this end, we first introduce the classical factorization method based on Cauchy data to the boundary value problem of the Helmholtz equation.
{\bl To overcome difficulties due to possible eigenvalues, a modified factorization using the Dirichlet-to-Neumann operator will be prosed in the second subsection.}

\subsection{Dirichlet-to-Neumann operators and the range identity}

By the trace theorem for $H_\Delta^1(\Om)$ and $L_\Delta^2(\Om)$ (see \cite[Theorem 2.1]{XH24}) and proceeding as in the proof of \cite[Theorems 2.2 and 2.3]{XH24} and \cite[Example 5.17]{Cakoni14}, we can show the existence of a unique solution to \eqref{1}--\eqref{3} and the following boundary value problems:
\be\label{5}
\Delta u_0+k^2u_0=0 & \text{in}\; B,\\ \label{6}
u_0=f & \text{on}\;\pa B,
\en
\be\label{1'}
\Delta v+k^2v=0 & \text{in } B\ba\ov{\Om},\\ \label{2'}
v=f & \text{on }\pa B,\\ \label{3'}
\pa_\nu v+\eta v=0 & \text{on }\pa \Om,
\en
and
\be\label{m1}
\Delta w+k^2nw=0 & \text{in}\; B,\\ \label{m2}
w=f & \text{on}\;\pa B.
\en
{\bl Here,} $\Om\subset B$ is a Lipschitz domain, $\nu$ denotes the unit normal on $\pa\Om$ directed into the exterior of $\Om$, $\eta\in C(\pa\Om)$ in \eqref{1'}--\eqref{3'},  $n\in L^\infty(B)$ and ${\rm supp}\,(n-1)=:\ov{\mho}$ is contained in $B$ {\bl and} $k^2$ is not an eigenvalue of the corresponding boundary value problem.
{\bl Note that} $k^2$ is called an eigenvalue of the boundary value problem \eqref{5}--\eqref{6} if \eqref{5}--\eqref{6} admits a nontrivial solution with $f=0$ on $\pa B$.
The eigenvalues of \eqref{1'}--\eqref{3'} and \eqref{m1}--\eqref{m2} are defined analogously.
{\bl Several remarks concerning the above boundary value problems are summarized below.}

\begin{remark}\label{rem-}
(i) If $k^2$ is not an eigenvalue of \eqref{5}--\eqref{6}, then $\|u_0\|_{H^1_\Delta(B)}\leq C\|f\|_{H^{1/2}(\pa B)}$ for all $f\in H^{1/2}(\pa B)$ and $\|u_0\|_{L^2_\Delta(B)}\leq C\|f\|_{H^{-1/2}(\pa B)}$ for all $f\in H^{-1/2}(\pa B)$, where $C>0$ is a constant independent of $f$.
Define the Dirichlet-to-Neumann operator $A_0:=A_0(k^2)$ corresponding to (\ref{5})--(\ref{6}) by $A_0f:=\pa_\nu u_0$ on $\pa B$.
By the trace theorem \cite[Theorem 2.1]{XH24}, we know $A_0:H^{1/2}(\pa B)\ra H^{-1/2}(\pa B)$ and $A_0:H^{-1/2}(\pa B)\ra H^{-3/2}(\pa B)$ are bounded.
By the interpolation property of the Sobolev
spaces (see \cite[Theorem 8.13]{Kress14}), $A_0:H^s(\pa B)\ra H^{s-1}(\pa B)$ is bounded for $s\in[-\frac12,\frac12]$.
By trace theorem and the regularity of elliptic equations (see e.g., \cite{Evans,GT}), we know $A_0:H^s(\pa B)\ra H^{s-1}(\pa B)$ is bounded for $s\geq-\frac12$.

(ii) If $k^2$ is not an eigenvalue of \eqref{1}--\eqref{3}, then $\|u\|_{H^1_\Delta(B\ba\ov{D})}\leq C\|f\|_{H^{1/2}(\pa B)}$ for all $f\in H^{1/2}(\pa B)$.
Assume further that $B_1$ is an open subset of $B$ such that $\ov{D}\subset B_1\subset\ov{B_1}\subset B$.
Then $\|u\|_{H_\Delta^1(B_1\ba\ov{D})}+\|u\|_{L^2_\Delta(B\ba\ov{B_1})}\leq C\|f\|_{H^{-1/2}(\pa B)}$ for all $f\in H^{-1/2}(\pa B)$.
Here, $C>0$ is a constant independent of $f$.
Define the Dirichlet-to-Neumann operator $A:=A(k^2,D)$ corresponding to \eqref{1}--\eqref{3} by $Af:=\pa_\nu u$ on $\pa B$.
By the trace theorem \cite[Theorem 2.1]{XH24}, we know $A:H^{1/2}(\pa B)\ra H^{-1/2}(\pa B)$ and $A:H^{-1/2}(\pa B)\ra H^{-3/2}(\pa B)$ are bounded.
By the interpolation property of the Sobolev
spaces (see \cite[Theorem 8.13]{Kress14}), $A:H^s(\pa B)\ra H^{s-1}(\pa B)$ is bounded for $s\in[-\frac12,\frac12]$.
By trace theorem and the regularity of elliptic equations (see e.g., \cite{Evans,GT}), we know $A:H^s(\pa B)\ra H^{s-1}(\pa B)$ is bounded for $s\geq-\frac12$.

(iii) If $k^2$ is not an eigenvalue of \eqref{1'}--\eqref{3'}, then $\|v\|_{H^1_\Delta(B\ba\ov{\Om})}\leq C\|f\|_{H^{1/2}(\pa B)}$ for all $f\in H^{1/2}(\pa B)$.
Assume further that $B_1$ is an open subset of $B$ such that $\ov{\Om}\subset B_1\subset\ov{B_1}\subset B$.
Then $\|v\|_{H_\Delta^1(B_1\ba\ov{\Om})}+\|v\|_{L^2_\Delta(B\ba\ov{B_1})}\leq C\|f\|_{H^{-1/2}(\pa B)}$ for all $f\in H^{-1/2}(\pa B)$.
Here, $C>0$ is a constant independent of $f$.
Define the Dirichlet-to-Neumann operator $A:=A(k^2,\Om)$ corresponding to \eqref{1'}--\eqref{3'} by $Af:=\pa_\nu v$ on $\pa B$.
By the trace theorem \cite[Theorem 2.1]{XH24}, we know $A:H^{1/2}(\pa B)\ra H^{-1/2}(\pa B)$ and $A:H^{-1/2}(\pa B)\ra H^{-3/2}(\pa B)$ are bounded.
By the interpolation property of the Sobolev
spaces (see \cite[Theorem 8.13]{Kress14}), $A:H^s(\pa B)\ra H^{s-1}(\pa B)$ is bounded for $s\in[-\frac12,\frac12]$.
By trace theorem and the regularity of elliptic equations (see e.g., \cite{Evans,GT}), we know $A:H^s(\pa B)\ra H^{s-1}(\pa B)$ is bounded for $s\geq-\frac12$.

(iv) If $k^2$ is not an eigenvalue of \eqref{m1}--\eqref{m2}, then $\|w\|_{H^1_\Delta(B)}\leq C\|f\|_{H^{1/2}(\pa B)}$ for all $f\in H^{1/2}(\pa B)$.
Assume further that $B_1$ is an open subset of $B$ such that $\ov{\mho}\subset B_1\subset\ov{B_1}\subset B$.
Then $\|w\|_{H_\Delta^1(B_1)}+\|w\|_{L^2_\Delta(B\ba\ov{B_1})}\leq C\|f\|_{H^{-1/2}(\pa B)}$ for all $f\in H^{-1/2}(\pa B)$.
Define the Dirichlet-to-Neumann operator $A:=A(k^2,\mho)$ corresponding to \eqref{m1}--\eqref{m2} by $Af:=\pa_\nu w$ on $\pa B$.
By the trace theorem \cite[Theorem 2.1]{XH24}, we know $A:H^{1/2}(\pa B)\ra H^{-1/2}(\pa B)$ and $A:H^{-1/2}(\pa B)\ra H^{-3/2}(\pa B)$ are bounded.
By the interpolation property of the Sobolev
spaces (see \cite[Theorem 8.13]{Kress14}), $A:H^s(\pa B)\ra H^{s-1}(\pa B)$ is bounded for $s\in[-\frac12,\frac12]$.
By trace theorem and the regularity of elliptic equations (see e.g., \cite{Evans,GT}), we know $A:H^s(\pa B)\ra H^{s-1}(\pa B)$ is bounded for $s\geq-\frac12$.

(v) The solution $u_0$ to \eqref{5}--\eqref{6} is analytic in $B$; the solution $u$ to \eqref{1}--\eqref{3} is analytic in $B\ba\ov{D}$; the solution $v$ to \eqref{1'}--\eqref{3'} is analytic in $B\ba\ov{\Om}$; the solution $w$ to \eqref{m1}--\eqref{m2} is analytic in $B\ba\ov{\mho}$; see \cite[Theorem 3.2]{Cakoni14}.

(vi) If ${\rm Im}\,\eta>0$ on $\pa B$, then $k^2>0$ is not an eigenvalue of \eqref{1'}--\eqref{3'} (see \cite[Theorem 8.2]{Cakoni14}).
Taking the complex conjugate of \eqref{1'}--\eqref{3'}, we know $k^2>0$ is not an eigenvalue of \eqref{1'}--\eqref{3'} provided ${\rm Im}\,\eta<0$ on $\pa B$.
If ${\rm Im}\,n>0$ in $\ov{B}$, then $k^2>0$ is not an eigenvalue of \eqref{m1}--\eqref{m2} (see \cite[Theorem 8.12]{CK19}).
Similarly, taking the complex conjugate of \eqref{m1}--\eqref{m2}, we know $k^2>0$ is not an eigenvalue of \eqref{m1}--\eqref{m2} provided ${\rm Im}\,n<0$ in $\ov{B}$.
\end{remark}

Now we return to the boundary value problem \eqref{-1}--\eqref{-3}.
We define the Dirichlet-to-Neumann operator $\Lambda(\sigma,q,D)$ by $\Lambda(\sigma,q,D)f=\sigma\pa_\nu u|_{\pa B}$, where $u$ solves \eqref{-1}--\eqref{-3}.
With this notation, the Cauchy data $(u|_{\pa B},\pa_\nu u|_{\pa B})$ to \eqref{-1}--\eqref{-3} can be represented as $(f,\Lambda(\sigma,q,D)f)$, or, equivalently, $(f,\sigma A(q/\sigma,D)f)$.

Below we shall derive factorizations of $A(k^2,\Om)-A_0(k^2)$ and $A(k^2,\mho)-A_0(k^2)$.
%
For this purpose, we define the operator $G(k^2,\Om):H^{-1/2}(\pa\Om)\ra H^{-1/2}(\pa B)$ by $G(k^2,\Om)g=\pa_\nu v$ on $\pa B$ where $v$ is the unique solution to the boundary value problem
\be\label{7}
\Delta v+k^2v=0 & \text{in}\;B\ba\ov{\Om},\\ \label{8}
v=0 & \text{on}\;\pa B,\\ \label{9}
\pa_\nu v+\eta v=g & \text{on}\;\pa\Om,
\en
{\bl and $\Om$, $\eta$ are defined as the} same as in (\ref{1'})--(\ref{3'}).
Similarly, define the operator $G(k^2,\mho):L^2(\mho)\ra H^{-1/2}(\pa B)$ by $G(k^2,\mho)\rho=\pa_\nu w$ where $w$ solves
\be\label{G-1}
\Delta w+k^2nw=(1-n)\rho & \text{in }B,\\ \label{G-2}
w=0 & \text{on }\pa B,
\en
{\bl and} $n$ is the same as in \eqref{m1}--\eqref{m2}.
If $k^2$ is not an eigenvalue of \eqref{1'}--\eqref{3'}, then there exists a unique solution to (\ref{7})--(\ref{9}) for any $g\in H^{-1/2}(\pa B)$.
Similarly, if $k^2$ is not an eigenvalue of \eqref{m1}--\eqref{m2}, then there exists a unique solution to (\ref{G-1})--(\ref{G-2}) for any $\rho\in L^2(\mho)$.
Therefore, $G(k^2,\Om):H^{-1/2}(\pa\Om)\ra H^{-1/2}(\pa B)$ and $G(k^2,\mho):L^2(\mho)\ra H^{-1/2}(\pa B)$ are well-defined.
Analogously to the proof of \cite[Theorem 2.5]{XH24}, {\bl we have} mapping properties of $G(k^2,\Om)$ and $G(k^2,\mho)$ summarized as follows.

\begin{theorem}\label{t3.3}
For any integer $m\geq1$ the following hold:

(a) If $k^2$ is not an eigenvalue of \eqref{1'}--\eqref{3'}, then $G(k^2,\Om):H^{-1/2}(\pa\Om)\ra H^{m-3/2}(\pa B)$ is bounded and $G(k^2,\Om):H^{-1/2}(\pa\Om)\ra L^2(\pa B)$ is compact.

(b) If $k^2$ is not an eigenvalue of \eqref{m1}--\eqref{m2}, then $G(k^2,\mho):L^2(\mho)\ra H^{m-3/2}(\pa B)$ is bounded and $G(k^2,\mho):L^2(\mho)\ra L^2(\pa B)$ is compact.
\end{theorem}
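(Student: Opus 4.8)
The plan is to treat both operators by the same two-step recipe: first establish interior regularity of the solution to the auxiliary problem away from the support of the data ($\pa\Om$ or $\mho$), and then pass this interior regularity to the boundary $\pa B$ through the trace theorem, upgrading boundedness into compactness by exploiting the gain of one (or more) Sobolev derivatives together with the compact Sobolev embedding $H^{m-3/2}(\pa B)\hookrightarrow L^2(\pa B)$ for $m\geq 1$. For part (a): given $g\in H^{-1/2}(\pa\Om)$, the well-posedness assumption (no eigenvalue of \eqref{1'}--\eqref{3'}) yields the unique solution $v\in H^1_\Delta(B\ba\ov\Om)$ of \eqref{7}--\eqref{9} with $\|v\|_{H^1_\Delta(B\ba\ov\Om)}\le C\|g\|_{H^{-1/2}(\pa\Om)}$. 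Since $\Delta v+k^2 v=0$ holds in all of $B\ba\ov\Om$ and $\pa B$ is analytic (a circle), $v$ is smooth up to $\pa B$ — indeed, by Remark 2.1(v), $v$ is analytic in $B\ba\ov\Om$, so in a fixed neighborhood of $\pa B$ inside $B\ba\ov\Om$ all Sobolev norms of $v$ are controlled by, say, the $L^2$ norm of $v$ on a slightly larger interior set, hence by $\|g\|_{H^{-1/2}(\pa\Om)}$. Applying the trace theorem \cite[Theorem 2.1]{XH24} to $\pa_\nu v$ on $\pa B$ then gives $\|\pa_\nu v\|_{H^{m-3/2}(\pa B)}\le C\|g\|_{H^{-1/2}(\pa\Om)}$ for every $m\ge 1$, which is the asserted boundedness; composing with the compact embedding $H^{m-3/2}(\pa B)\hookrightarrow L^2(\pa B)$ (valid already for $m=1$, since $H^{-1/2}$ embeds compactly into, e.g., $H^{-1}$, but more cleanly one takes $m=2$ to land in $H^{1/2}(\pa B)\hookrightarrow L^2(\pa B)$ compactly) establishes compactness of $G(k^2,\Om):H^{-1/2}(\pa\Om)\to L^2(\pa B)$.

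For part (b) the argument is structurally identical. Given $\rho\in L^2(\mho)$, the no-eigenvalue assumption for \eqref{m1}--\eqref{m2} gives a unique $w\in H^1_\Delta(B)$ solving \eqref{G-1}--\eqref{G-2} with $\|w\|_{H^1_\Delta(B)}\le C\|\rho\|_{L^2(\mho)}$; note the source term $(1-n)\rho$ is supported in $\ov\mho$, so away from $\ov\mho$ the function $w$ satisfies the homogeneous Helmholtz equation and, by Remark 2.1(v), is analytic there, in particular in a fixed neighborhood of $\pa B$. The same trace-theorem step then yields $\|\pa_\nu w\|_{H^{m-3/2}(\pa B)}\le C\|\rho\|_{L^2(\mho)}$ for all $m\ge 1$, hence boundedness, and composing with the compact embedding into $L^2(\pa B)$ gives compactness. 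Throughout, one invokes \cite[Theorem 2.5]{XH24} as the template, since that reference proves precisely this kind of statement for the analogous operator in the Laplace setting; the Helmholtz modification only requires that $k^2$ avoid the relevant eigenvalue, which is exactly our standing hypothesis.

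The main technical obstacle — and the place where I would be most careful — is the interior elliptic estimate that converts the $H^1_\Delta$ a priori bound into control of high-order Sobolev norms near $\pa B$. The clean way is to fix nested open sets $\ov\Om\subset B_1\subset\ov{B_1}\subset B$ (resp. $\ov\mho\subset B_1\subset\ov{B_1}\subset B$) exactly as in Remark 2.1, observe that on the annular region $B\ba\ov{B_1}$ the solution solves a homogeneous Helmholtz equation with analytic (circular) outer boundary $\pa B$, and then apply standard elliptic regularity up to $\pa B$ (\cite{Evans,GT}) together with a Caccioppoli/interior-estimate chain to bound $\|v\|_{H^{m}(\text{nbhd of }\pa B)}$ by $\|v\|_{L^2(B\ba\ov{B_1})}$, which is in turn $\le C\|g\|_{H^{-1/2}(\pa\Om)}$ by the second estimate in Remark 2.1(iii) (resp. Remark 2.1(iv) for $w$). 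The subtlety is purely bookkeeping: one must make sure the constant $C$ depends only on $m$, $k$, $B$ and the fixed geometry, not on $g$ or $\rho$; since all the ingredients (well-posedness, trace theorem, interior regularity) are linear with constants of this type, the composition inherits the uniformity, and the proof goes through.
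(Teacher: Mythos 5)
Your proposal is correct and takes essentially the approach the paper intends, namely the adaptation of \cite[Theorem 2.5]{XH24} to the Helmholtz setting: the a priori bound from well-posedness of \eqref{7}--\eqref{9} (resp.\ \eqref{G-1}--\eqref{G-2}), higher-order regularity of the solution in an annular neighborhood of $\pa B$ where it satisfies the homogeneous Helmholtz equation, the trace theorem for $\pa_\nu$ on $\pa B$, and the compact embedding $H^{m-3/2}(\pa B)\hookrightarrow L^2(\pa B)$. Two small points to tighten: the regularity near $\pa B$ must be obtained as boundary (not merely interior) regularity, using the homogeneous Dirichlet condition $v=0$ (resp.\ $w=0$) on the analytic circle $\pa B$ from \eqref{8} (resp.\ \eqref{G-2}) — which your final paragraph effectively does — and since $H^{-1/2}(\pa B)\not\subset L^2(\pa B)$, the compactness should be drawn from $m\geq2$ (e.g.\ $H^{1/2}(\pa B)\hookrightarrow L^2(\pa B)$), as you indeed choose, rather than from the $m=1$ case.
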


{\bl To factorize the operators} $A(k^2,\Om)-A_0(k^2)$ and $A(k^2,\mho)-A_0(k^2)$, we need to introduce several integral operators.
We begin with the Green function to the Helmholtz equation in $B$ with the homogeneous Dirichlet boundary condition
\ben
\Psi_k(x,y):=\Phi_k(x,y)+\psi_k(x,y),\quad x,y\in B,\;x\neq y,
\enn
where $\Phi_k(x,y):=\frac{i}{4}H_0^{(1)}(k|x-y|)$ is the fundamental solution to the Helmholtz equation in $\R^2$ with wave number $k$ (see \cite[(3.106)]{CK19}) and $u_0=\psi_k(\cdot,y)$ is the unique solution to (\ref{5})--(\ref{6}) with $f:=-\Phi_k(\cdot,y)$ on $\pa B$. {\bl Note that $k^2$ is not an eigenvalue of \eqref{5}--\eqref{6}.
We observe} that $\Psi_k(x,y)$ is analytic in $x\in B\ba\{y\}$ and $\Psi_k(\cdot,y)=0$ on $\pa B$.
Moreover, proceeding as in \cite{XH24} {\bl one can prove} the symmetric property for $\Psi_k(x,y)$ and {\bl get} an explicit representation of $u_0$ in terms of the boundary data $f$ as follows.

\begin{lemma}\label{lem211025}
Assume $k^2$ is not an eigenvalue of \eqref{5}--\eqref{6}. Then the following hold:

(i) $\Psi_k(x,y)=\Psi_k(y,x)$ for all $x,y\in B$.

(ii) For $f\in H^s(\pa B)$ with $s\in[-\frac12,\frac12]$, the solution $u_0\in L^2_\Delta(B)$ to (\ref{5})--(\ref{6}) is given by
\be\label{211025-1}
u_0(x)=-\int_{\pa B}\frac{\pa \Psi_k(x,y)}{\pa\nu(y)}f(y)ds(y),\quad x\in B.
\en
In particular, $u_0\in H^1_\Delta(B)$ provided $f\in H^{1/2}(\pa B)$.
\end{lemma}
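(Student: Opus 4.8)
The plan is to establish the two assertions of Lemma \ref{lem211025} by exploiting the symmetry and boundary behaviour of the Green function $\Psi_k$, following the pattern already used for the Laplace case in \cite{XH24}.

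\medskip

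\textbf{Part (i): symmetry of $\Psi_k$.} First I would fix two distinct points $x_1,x_2\in B$ and apply Green's second identity to the pair $\Psi_k(\cdot,x_1)$ and $\Psi_k(\cdot,x_2)$ on the punctured domain $B\setminus(\overline{B_\varepsilon(x_1)}\cup\overline{B_\varepsilon(x_2)})$. Since both functions satisfy the Helmholtz equation $\Delta\Psi_k(\cdot,x_j)+k^2\Psi_k(\cdot,x_j)=0$ away from their singularities, the volume terms cancel. On $\partial B$ both functions vanish by construction (recall $\Psi_k(\cdot,y)=0$ on $\partial B$), so the boundary integral over $\partial B$ drops out. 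Letting $\varepsilon\to0$, the standard local analysis of the logarithmic singularity of $\Phi_k$ — using that $\psi_k(\cdot,x_j)$ is smooth near $x_j$ and that $H_0^{(1)}(k|x-y|)\sim \frac{2i}{\pi}\log|x-y|$ — shows the small-circle integral around $x_1$ produces $\Psi_k(x_1,x_2)$ and the one around $x_2$ produces $-\Psi_k(x_2,x_1)$, giving the claim. This is the routine part.

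\medskip

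\textbf{Part (ii): representation formula.} For $f\in H^{1/2}(\partial B)$ the solution $u_0$ lies in $H^1_\Delta(B)$ by Remark \ref{rem-}(i), so I would start from Green's representation theorem for $u_0$ in $B$: for $x\in B$,
\begin{equation*}
u_0(x)=\int_{\partial B}\left(\Phi_k(x,y)\,\partial_\nu u_0(y)-\frac{\partial\Phi_k(x,y)}{\partial\nu(y)}\,u_0(y)\right)ds(y).
\end{equation*}
Next, since $u_0(\cdot)=\psi_k(\cdot,x)$ solves \eqref{5}--\eqref{6} with data $-\Phi_k(\cdot,x)$ on $\partial B$, Green's second identity applied to $u_0$ and $\psi_k(\cdot,x)$ on $B$ (again the Helmholtz volume terms cancel, and this time there is no interior singularity) yields $\int_{\partial B}\big(\psi_k(x,y)\partial_\nu u_0(y)-\partial_\nu\psi_k(x,y)\,u_0(y)\big)ds(y)=0$. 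Adding this to the representation formula converts $\Phi_k$ into $\Psi_k$; because $\Psi_k(x,\cdot)=0$ on $\partial B$ (using part (i) to move the vanishing from the first to the second slot), the single-layer term disappears and one is left with $u_0(x)=-\int_{\partial B}\partial_\nu\Psi_k(x,y)\,u_0(y)\,ds(y)=-\int_{\partial B}\partial_\nu\Psi_k(x,y)f(y)\,ds(y)$, which is \eqref{211025-1}. Finally, to extend this to $f\in H^s(\partial B)$ with $s\in[-\tfrac12,\tfrac12]$ and $u_0\in L^2_\Delta(B)$, I would use the density of $H^{1/2}(\partial B)$ in $H^s(\partial B)$ together with the continuity estimates in Remark \ref{rem-}(i) on the left, and on the right the fact that, for fixed $x\in B$, the kernel $\partial_\nu\Psi_k(x,\cdot)$ is smooth (indeed analytic) on $\partial B$, so the pairing $\langle\partial_\nu\Psi_k(x,\cdot),f\rangle$ extends by duality to all $f\in H^s(\partial B)$ and depends continuously on $f$; passing to the limit in both sides closes the argument.

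\medskip

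\textbf{Main obstacle.} The genuinely delicate step is the low-regularity case $s<\tfrac12$ in part (ii): here $u_0$ need not be in $H^1(B)$, so one cannot directly take its Dirichlet/Neumann traces on $\partial B$ in the classical sense, and Green's identities must be justified via the trace theory for $L^2_\Delta(B)$ from \cite[Theorem 2.1]{XH24} and a duality/density argument rather than by a direct computation. Handling the boundary terms correctly in that weak setting — and verifying that the right-hand side of \eqref{211025-1} is well-defined and continuous in $f\in H^s(\partial B)$ pointwise in $x$ — is where the real work lies; everything else reduces to classical potential theory for the Helmholtz Green function.
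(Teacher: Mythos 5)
Your argument is correct and is exactly the standard Green's-identity proof that the paper itself invokes by reference (it omits the proof, pointing to the analogous Laplace-case argument in \cite{XH24}): symmetry via Green's second identity on the doubly punctured domain, the representation formula by adding the interior Green identity for $\psi_k(\cdot,x)$ to Green's representation theorem so that $\Phi_k$ upgrades to $\Psi_k$ and the single-layer term drops, and a density/duality passage to $s<\tfrac12$ using the a priori bound of Remark \ref{rem-}(i) and the smoothness of $\pa_\nu\Psi_k(x,\cdot)$ on $\pa B$. You also correctly identify the only delicate point, namely justifying the boundary terms for $f\in H^s(\pa B)$ with $s<\tfrac12$ via the trace theory for $L^2_\Delta(B)$, so nothing is missing.
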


\begin{remark}\label{rem3.4}
$\Psi_k(x,y)$ is {\bl real valued} for all $x,y\in B$.
Actually, for any fixed $y\in B$ define $\tilde u(x,y):=\frac{i}2J_0(k|x-y|)+\psi_k(x,y)-\ov{\psi_k(x,y)}$, $x\in B$, where $J_0$ denotes the Bessel function of order zero.
It is easy to verify that $\tilde u(x,y)=\Psi_k(x,y)-\ov{\Psi_k(x,y)}$ for $x\neq y$.
Hence, $\tilde u(\cdot,y)$ solves the Helmholtz equation in $B$ and vanishes on $\pa B$.
Therefore, $\tilde u(\cdot,y)=0$ since $k^2$ is not an eigenvalue of (\ref{5})--(\ref{6}).
\end{remark}


Analogously to \cite[(3.8)--(3.11) and (11.62)]{CK19}, we define the boundary integral operators $S:=S(k^2,\pa\Om)$, $K:=K(k^2,\pa\Om)$, $K':=K'(k^2,\pa\Om)$, and $T:=T(k^2,\pa\Om)$ by
\be\label{bi-1}
&(S\varphi)(x):=\int_{\pa\Om}\Psi_k(x,y)\varphi(y)ds(y),\quad x\in\pa\Om,&\\ \label{bi-2}
&(K\varphi)(x):=\int_{\pa\Om}\frac{\pa\Psi_k(x,y)}{\pa\nu(y)}\varphi(y)ds(y),\quad x\in\pa\Om,&\\ \label{bi-3}
&(K'\varphi)(x):=\frac{\pa}{\pa\nu(x)}\int_{\pa\Om}\Psi_k(x,y)\varphi(y)ds(y),\quad x\in\pa\Om,&\\ \label{bi-4}
&(T\varphi)(x):=\frac{\pa}{\pa\nu(x)}\int_{\pa\Om}\frac{\pa\Psi_k(x,y)}{\pa\nu(y)}\varphi(y)ds(y),\quad x\in\pa\Om,&
\en
and the volume integral operator $S_\mho:=S_\mho(k^2)$ by
\ben
(S_\mho\psi)(x)=-\frac{\psi}{1-n}-k^2\int_{\mho}\Psi_k(x,y)\psi(y)dy,\quad x\in\mho.
\enn
{\bl Properties} of the above integral operators are given in the following theorems.
{\x The proofs of these properties are omitted, because they are similar to those in \cite{Cakoni14,CK19,CK83,Costabel,Kirsch08,WM,XH24}.}
\begin{theorem}\label{t+}
Assume $k^2$ is not an eigenvalue of \eqref{5}--\eqref{6}.

(a) {\bl The integral operator $H:L^2(\pa B)\ra H^{-1/2}(\pa\Om)$ defined by}
\ben
(Hf)(x):=-\left(\frac{\pa}{\pa\nu(x)}+\eta\right)\int_{\pa B}\frac{\pa \Psi_k(x,y)}{\pa\nu(y)}f(y)ds(y),\quad x\in\pa\Om.
\enn
is bounded.
If $k^2$ is not an eigenvalue of
\be\label{imp1}
\Delta u_0+k^2u_0=0 && \text{in }\Om,\\ \label{imp2}
\pa_\nu u_0+\eta u_0=0 && \text{on }\pa\Om,
\en
i.e., the unique solution of \eqref{imp1}--\eqref{imp2} is the zero solution $u_0=0$, then $H$ is injective.

(b) $S:H^{-1/2}(\pa\Om)\ra H^{1/2}(\pa\Om)$ is bounded  and self-adjoint.

(c) $K:H^{-1/2}(\pa\Om)\ra H^{1/2}(\pa\Om)$, $K':H^{-1/2}(\pa\Om)\ra H^{1/2}(\pa\Om)$ are bounded and adjoint.

(d) $T:H^{1/2}(\pa\Om)\ra H^{-1/2}(\pa\Om)$ are bounded and self-adjoint.

(e) Define $T_{imp}:=T+i{\rm Im}\,\eta I+K'\ov{\eta}+\eta K+\eta S\ov{\eta}$ for some $\eta\in C(\pa\Om)$ satisfying ${\rm Im}\,\eta>0$ or ${\rm Im}\,\eta<0$ on $\pa\Om$.
Then $T_{imp}:H^{1/2}(\pa\Om)\ra H^{-1/2}(\pa\Om)$ is bounded.
Define $T_i:H^{1/2}(\pa\Om)\ra H^{-1/2}(\pa\Om)$ by
\ben
(T_i\varphi)(x)=\frac{\pa}{\pa\nu(x)}\int_{\pa\Om}\frac{\pa\Phi_i(x,y)}{\pa\nu(y)}\varphi(y)ds(y),\quad x\in\pa\Om.
\enn
Then $-T_i$ is coercive, i.e., there exists a constant $c>0$ such that
\ben
\langle-T_i\varphi,\ov{\varphi}\rangle\geq c\|\varphi\|_{H^{1/2}(\pa\Om)}^2\quad\text{for all }\varphi\in H^{1/2}(\pa\Om),
\enn
and $(T_{imp}-T_i):H^{1/2}(\pa\Om)\ra H^{-1/2}(\pa\Om)$ is compact.
Moreover, it holds that
\ben
{\rm sign}({\rm Im}\,\eta){\rm Im}\langle\,T_{imp}\varphi,\ov{\varphi}\rangle>0\quad\text{for all }\varphi\in H^{1/2}(\pa\Om)\ba\{0\}.
\enn
\end{theorem}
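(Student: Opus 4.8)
\medskip
\noindent\textit{Proof plan.} The plan is to reduce every assertion to the classical theory of acoustic boundary integral operators by splitting the Green kernel as $\Psi_k(x,y)=\Phi_k(x,y)+\psi_k(x,y)$. For fixed $y\in B$, $\psi_k(\cdot,y)$ solves \eqref{5}--\eqref{6} with boundary datum $-\Phi_k(\cdot,y)$ on $\pa B$, a datum depending smoothly on the parameter $y$ with no interior singularity; hence $\psi_k$ is a smooth (indeed real-analytic) function of $(x,y)$ on $B\times B$, also across the diagonal. Consequently every boundary integral operator formed from the $\psi_k$-part has a $C^\infty$ kernel on $\pa\Om\times\pa\Om$, is a compact smoothing map between any Sobolev spaces on $\pa\Om$, and contributes no jump across $\pa\Om$; the $\Phi_k$-parts are exactly the usual acoustic single-layer $S$, double-layer $K$, adjoint double-layer $K'$ and hypersingular $T$ operators, whose boundedness, jump, self-adjointness and adjointness properties are the classical ones of \cite{Costabel,CK19,CK83,XH24}. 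I will also use, from Remark~\ref{rem3.4}, that since $k^2>0$ the kernel $\Psi_k$ is real-valued, so that $S,K,K',T$ all carry real, symmetric kernels; this is the decisive point for the imaginary-part statement in~(e).

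\medskip
\noindent For~(a): by Lemma~\ref{lem211025}(ii), $-\int_{\pa B}\pa_{\nu(y)}\Psi_k(\cdot,y)f(y)\,ds(y)$ equals the solution $u_0$ of \eqref{5}--\eqref{6} with datum $f$, so $Hf=(\pa_\nu+\eta)u_0|_{\pa\Om}$ is simply the impedance trace of $u_0$ on $\pa\Om$. Since $\ov\Om\subset B$, the kernel of $H$ is smooth on $\pa\Om\times\pa B$, which gives boundedness $L^2(\pa B)\ra C^\infty(\pa\Om)\hookrightarrow H^{-1/2}(\pa\Om)$; and if $Hf=0$ then $u_0|_\Om$ solves \eqref{imp1}--\eqref{imp2}, so under the non-eigenvalue assumption $u_0\equiv0$ in $\Om$, hence $u_0\equiv0$ throughout the connected set $B$ by analyticity (Remark~\ref{rem-}(v)) and unique continuation, and $f=u_0|_{\pa B}=0$. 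For (b)--(d) I would use the template above: the claimed boundedness between the indicated Sobolev spaces is the classical bound for the $\Phi_k$-part plus the trivial bound for the smooth remainder, and the self-adjointness of $S$ and $T$ and the relation $\langle K\varphi,\psi\rangle=\langle\varphi,K'\psi\rangle$ are read off from $\Psi_k(x,y)=\Psi_k(y,x)$ by Fubini (for $T$ via the regularised, Maue-type, form of the quadratic form).

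\medskip
\noindent For~(e): $T$ is bounded $H^{1/2}(\pa\Om)\ra H^{-1/2}(\pa\Om)$ by~(d), while each of $i\,\Ima\eta\,I$, $K'\ov\eta$, $\eta K$, $\eta S\ov\eta$ is a composition of multiplications by $C(\pa\Om)$ functions with classical operators that land in $H^{1/2}(\pa\Om)$ or $L^2(\pa\Om)$, followed by the compact embedding $H^{1/2}(\pa\Om)\hookrightarrow L^2(\pa\Om)\hookrightarrow H^{-1/2}(\pa\Om)$; hence $T_{imp}$ is bounded and $T_{imp}-T$ is compact. The coercivity of $-T_i$ on $H^{1/2}(\pa\Om)$ is the classical positivity of the hypersingular operator for the modified Helmholtz equation, obtained from Green's first identity for the $\Phi_i$-double-layer potential (cf. \cite{CK83,CK19}). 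Writing $T_{imp}-T_i=(T-T_i)+(T_{imp}-T)$ and $T-T_i=T_{\psi_k}+(T_{\Phi_k}-T_{\Phi_i})$, the first summand is smoothing and the second has a weakly singular kernel, because the principal ($k$-independent, purely logarithmic) singularities cancel; hence $T_{imp}-T_i$ is compact. Finally, because $\Psi_k$ is real and symmetric, $\langle T\varphi,\ov\varphi\rangle$ and $\langle\eta S\ov\eta\varphi,\ov\varphi\rangle$ are real, $\langle\eta K\varphi,\ov\varphi\rangle=\ov{\langle K'\ov\eta\varphi,\ov\varphi\rangle}$, and $\langle i\,\Ima\eta\,I\varphi,\ov\varphi\rangle=i\int_{\pa\Om}\Ima\eta\,|\varphi|^2\,ds$, so that
\[
\Ima\langle T_{imp}\varphi,\ov\varphi\rangle=\int_{\pa\Om}\Ima\eta\,|\varphi|^2\,ds,
\]
and multiplying by ${\rm sign}(\Ima\eta)$, with $\Ima\eta$ continuous and nonvanishing on the compact set $\pa\Om$, gives ${\rm sign}(\Ima\eta)\Ima\langle T_{imp}\varphi,\ov\varphi\rangle=\int_{\pa\Om}|\Ima\eta|\,|\varphi|^2\,ds\geq(\min_{\pa\Om}|\Ima\eta|)\|\varphi\|_{L^2(\pa\Om)}^2>0$ for $\varphi\neq0$.

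\medskip
\noindent I do not expect a single dramatic obstacle; the work is in the bookkeeping. The more delicate points will be: pinning down the precise classical Sobolev exponents in (b)--(d) for the assumed regularity of $\pa\Om$; checking that $\psi_k$ is genuinely smooth up to the diagonal, so that all $\psi_k$-contributions are smoothing and carry no jump; and controlling the weakly singular remainder $T_{\Phi_k}-T_{\Phi_i}$ that enters the compactness of $T_{imp}-T_i$. These are exactly the ingredients supplied by \cite{Costabel,CK19,CK83,Cakoni14,Kirsch08,XH24}, which is why the detailed proof is omitted.
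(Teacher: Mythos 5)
Your proposal is correct and follows the same route the paper intends: the paper omits this proof entirely, citing \cite{Cakoni14,CK19,CK83,Costabel,Kirsch08,WM,XH24}, and your decomposition $\Psi_k=\Phi_k+\psi_k$ into the classical kernel plus a real-analytic remainder, together with the real-valuedness and symmetry of $\Psi_k$ to compute $\Ima\langle T_{imp}\varphi,\ov{\varphi}\rangle=\int_{\pa\Om}\Ima\eta\,|\varphi|^2\,ds$, is exactly the standard argument those references supply. The only caveat is one you already flag yourself: the mapping properties $K,K':H^{-1/2}(\pa\Om)\to H^{1/2}(\pa\Om)$ as stated require more than Lipschitz regularity of $\pa\Om$, but that is an issue with the theorem's hypotheses (in practice the testing domains are disks) rather than with your argument.
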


The coercivity of the boundary integral operator $T_{imp}$ plays an important role in the factorization method.
It should be remarked that we consider the impedance boundary condition with ${\rm Im}\,\eta>0$ or ${\rm Im}\,\eta<0$ on $\pa\Om$ since, in general, the boundary integral operators are not coercive under the Dirichlet or Neumann boundary condition, even if $k^2$ is not a corresponding eigenvalue.
Moreover, analogously to Remark \ref{rem-} (vi), if ${\rm Im}\,\eta>0$ or ${\rm Im}\,\eta<0$ on $\pa\Om$,  then $k^2>0$ is {\bl cannot be} an eigenvalue of \eqref{imp1}--\eqref{imp2}.

\begin{theorem}\label{t+m}
Assume $k^2$ is not an eigenvalue of \eqref{5}--\eqref{6}.

(a) Define $H_\mho:L^2(\pa B)\ra L^2(\mho)$ by
\ben
(H_\mho f)=-\int_{\pa B}\frac{\pa\Psi_k(x,y)}{\pa\nu(y)}f(y)ds(y),\quad x\in\mho.
\enn
Then $H_\mho:L^2(\pa B)\ra L^2(\mho)$ is bounded, compact, and injective.


(b) If ${\rm Re}\,n>1$ or ${\rm Re}\,n<1$ in $\ov{\mho}$ and ${\rm Im}\,n>0$ or ${\rm Im}\,n<0$ in $\ov{\mho}$, then $S_\mho:L^2(\mho)\ra L^2(\mho)$ is bounded.
Define $S_{\mho,0}:L^2(\mho)\ra L^2(\mho)$ by
\ben
(S_{\mho,0}\psi)(x):=-\frac{\psi}{1-n},\quad x\in\mho.
\enn
Then ${\rm sign}({\rm Re}\,n-1){\rm Re}\,S_{\mho,0}$ is coercive, i.e., there exists a constant $c>0$ such that
\ben
{\rm sign}({\rm Re}\,n-1){\rm Re}\langle S_{\mho,0}\psi,\ov{\psi}\rangle\geq c\left\|\psi\right\|_{L^2(\mho)}^2\quad\text{for all }\psi\in L^2(\mho),
\enn
and $S_\mho-S_{\mho,0}:L^2(\mho)\ra L^2(\mho)$ is compact.
Moreover, it holds that
\ben
-{\rm sign}({\rm Im}\,n){\rm Im}\langle S_\mho\psi,\ov{\psi}\rangle>0\quad\text{for all }\psi\in L^2(\mho)\ba\{0\}.
\enn
\end{theorem}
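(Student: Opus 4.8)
The plan is to treat the two parts of Theorem \ref{t+m} in turn, modelling the arguments on the corresponding volume-potential facts used in the factorization method for the inhomogeneous medium problem (cf. \cite{Kirsch08, CK19, Cakoni14}), with the only novelty being that the free-space fundamental solution $\Phi_k$ is replaced by the Dirichlet Green function $\Psi_k$ of the disk $B$.

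For part (a), I would first note that by Lemma \ref{lem211025}(ii) the function $(H_\mho f)(x)=-\int_{\pa B}\frac{\pa\Psi_k(x,y)}{\pa\nu(y)}f(y)\,ds(y)$ is exactly $u_0|_\mho$, the restriction to $\mho$ of the solution $u_0$ of \eqref{5}--\eqref{6}. Boundedness from $L^2(\pa B)$ into $L^2(\mho)$ then follows from the mapping estimate $\|u_0\|_{L^2_\Delta(B)}\le C\|f\|_{H^{-1/2}(\pa B)}$ in Remark \ref{rem-}(i) together with $\|f\|_{H^{-1/2}(\pa B)}\le C\|f\|_{L^2(\pa B)}$ and $\ov{\mho}\subset B$. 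Compactness comes from Remark \ref{rem-}(v): $u_0$ is analytic in $B$, hence $H^m$-bounded on a slightly larger subdomain $B_1$ with $\ov{\mho}\subset B_1\subset\ov{B_1}\subset B$, and the embedding $H^m(B_1)\hookrightarrow L^2(\mho)$ is compact for $m\ge1$; this is the same device already used in Theorem \ref{t3.3}. For injectivity, suppose $H_\mho f=0$, i.e. $u_0=0$ on the open set $\mho$. Since $u_0$ is real-analytic in the connected domain $B$ and vanishes on an open subset, unique continuation (equivalently, the identity theorem for analytic functions) forces $u_0\equiv0$ in $B$, hence $f=u_0|_{\pa B}=0$. (Here one uses that $k^2$ is not an eigenvalue of \eqref{5}--\eqref{6}, which is a standing assumption of the theorem, although in fact $u_0\equiv0$ already gives $f=0$ directly.)

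For part (b), the decomposition $S_\mho=S_{\mho,0}+S_{\mho,1}$ with $(S_{\mho,1}\psi)(x)=-k^2\int_\mho\Psi_k(x,y)\psi(y)\,dy$ splits the operator into a pointwise multiplication part and a volume-potential part. Boundedness of $S_\mho$ on $L^2(\mho)$ reduces to: (i) $1/(1-n)\in L^\infty(\mho)$, which holds because $n$ is bounded and, under the hypothesis $\real n>1$ or $\real n<1$ in $\ov\mho$ together with $\Ima n>0$ or $\Ima n<0$, the quantity $1-n$ is bounded away from $0$ on the compact set $\ov\mho$; and (ii) boundedness (indeed compactness) of the weakly singular volume potential $\psi\mapsto\int_\mho\Psi_k(\cdot,y)\psi(y)\,dy$ on $L^2(\mho)$, which follows from the logarithmic singularity of $\Phi_k$ and the smoothness of the correction $\psi_k$, exactly as for the Newtonian/Helmholtz volume potential in \cite{CK19}. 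The same argument shows $S_\mho-S_{\mho,0}=S_{\mho,1}$ is compact. For the coercivity of ${\rm sign}(\real n-1)\,\real S_{\mho,0}$, compute $\real\langle S_{\mho,0}\psi,\ov\psi\rangle=-\int_\mho\real\!\big(\tfrac1{1-n}\big)|\psi|^2\,dx=\int_\mho\frac{\real n-1}{|1-n|^2}|\psi|^2\,dx$; since $\real n-1$ has a fixed sign and $|1-n|$ is bounded above on $\ov\mho$, multiplying by ${\rm sign}(\real n-1)$ yields a lower bound $c\|\psi\|_{L^2(\mho)}^2$ with $c=\inf_{\ov\mho}|\real n-1|/\sup_{\ov\mho}|1-n|^2>0$. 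Finally, for the sign of the imaginary part, note $\Ima\langle S_{\mho,1}\psi,\ov\psi\rangle$: one shows this term does not spoil the sign by the standard argument — writing $w$ for the solution of \eqref{G-1}--\eqref{G-2} with $\rho=\psi$, Green's identity on $B$ together with $w=0$ on $\pa B$ gives $\langle S_\mho\psi,\ov\psi\rangle=\int_B\big(|\nabla w|^2-k^2 n|w|^2\big)\,dx-\int_\mho\frac1{1-n}|\psi|^2\,dx+\text{(real terms)}$, so that $\Ima\langle S_\mho\psi,\ov\psi\rangle$ equals $-k^2\int_B(\Ima n)|w|^2\,dx+\int_\mho\frac{\Ima n}{|1-n|^2}|\psi|^2\,dx$ up to sign bookkeeping; with $\Ima n$ of one fixed sign throughout $\ov\mho$, both terms carry that sign, and strict positivity of $-{\rm sign}(\Ima n)\Ima\langle S_\mho\psi,\ov\psi\rangle$ follows because the $\psi$-term alone is already strictly positive for $\psi\neq0$.

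The main obstacle I anticipate is the bookkeeping in part (b): getting the Green's-identity expansion of $\langle S_\mho\psi,\ov\psi\rangle$ correct, carefully tracking which terms are real and which contribute to the imaginary part, and verifying that the boundary term over $\pa B$ genuinely drops out because of the homogeneous Dirichlet condition built into $\Psi_k$ (this is precisely where using $\Psi_k$ rather than $\Phi_k$ pays off and where a sign error would be easy to make). Everything else — boundedness, compactness via interior analyticity, and the algebraic coercivity estimate for the multiplication operator — is routine and parallels the cited references.
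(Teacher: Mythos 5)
Your part (a) and the boundedness, compactness and coercivity-of-$S_{\mho,0}$ arguments in part (b) are sound and follow exactly the standard route the paper has in mind (the paper in fact omits this proof altogether, referring to \cite{Cakoni14,CK19,Kirsch08,XH24}, so there is no in-text proof to compare against; your reconstruction of those steps is the intended one). One small caveat: to get $1/(1-n)\in L^\infty(\mho)$ and the coercivity constant you need $|1-n|$ \emph{essentially} bounded away from zero, which does not follow from ``${\rm Re}\,n>1$ pointwise'' plus compactness of $\ov\mho$ alone when $n$ is merely $L^\infty$; one should read the hypothesis as a uniform (or continuous-coefficient) condition, as is standard.

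The genuine problem is the last step, the sign of ${\rm Im}\langle S_\mho\psi,\ov\psi\rangle$. You take $w$ to be the solution of \eqref{G-1}--\eqref{G-2} with $\rho=\psi$, but that $w$ does not represent the volume potential appearing in $S_\mho$; the relevant auxiliary function is $w(x)=\int_\mho\Psi_k(x,y)\psi(y)\,dy$, which satisfies $\Delta w+k^2w=-\psi$ in $B$ and $w=0$ on $\pa B$. Moreover, the identity you then write down is internally inconsistent: if ${\rm Im}\langle S_\mho\psi,\ov\psi\rangle$ really were $-k^2\int_B({\rm Im}\,n)|w|^2+\int_\mho\frac{{\rm Im}\,n}{|1-n|^2}|\psi|^2$ with ``both terms carrying the sign of ${\rm Im}\,n$'', then $-{\rm sign}({\rm Im}\,n){\rm Im}\langle S_\mho\psi,\ov\psi\rangle$ would be \emph{negative}, contradicting the assertion you are trying to prove. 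The correct bookkeeping is much simpler and needs no $|w|^2$ term at all: since $\Psi_k$ is real-valued and symmetric (Remark \ref{rem3.4} and Lemma \ref{lem211025}(i)), the operator $\psi\mapsto\int_\mho\Psi_k(\cdot,y)\psi(y)\,dy$ is self-adjoint on $L^2(\mho)$, so its quadratic form is real (equivalently, with the correct $w$ above, Green's identity gives $-k^2\int_\mho w\ov\psi\,dx=k^2\bigl(k^2\int_B|w|^2\,dx-\int_B|\nabla w|^2\,dx\bigr)\in\R$). Hence
\begin{equation*}
{\rm Im}\langle S_\mho\psi,\ov\psi\rangle={\rm Im}\langle S_{\mho,0}\psi,\ov\psi\rangle=-\int_\mho\frac{{\rm Im}\,n}{|1-n|^2}|\psi|^2\,dx,
\end{equation*}
so that $-{\rm sign}({\rm Im}\,n){\rm Im}\langle S_\mho\psi,\ov\psi\rangle=\int_\mho\frac{|{\rm Im}\,n|}{|1-n|^2}|\psi|^2\,dx>0$ for $\psi\neq0$. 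With this replacement your proposal becomes a complete proof; as written, the final sign argument does not go through.
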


The coercivity of the boundary integral operator $S_\mho$ plays an important role in the factorization method.
It should be remarked that we consider a medium with refractive index $n$ satisfying ${\rm Im}\,n>0$ or ${\rm Im}\,n<0$ in $\ov{\mho}$ since, in general, $S_\mho$ is not coercive when $n$ is real-valued, even if $k^2$ is not a corresponding eigenvalue.
Moreover, analogously to  Remark \ref{rem-} (vi), if ${\rm Im}\,n>0$ or ${\rm Im}\,n<0$ in $\ov{\mho}$ then $k^2$ is not an eigenvalue of \eqref{m1}--\eqref{m2}.

{\bl Using the results of previous theorems and proceeding} as in \cite{CK19,Kirsch08}, {\x we can obtain the symmetric factorizations of $A(k^2,\Om)-A_0(k^2)$ and $A(k^2,\mho)-A_0(k^2)$ as follows.}

\begin{theorem}\label{t3.5}
We have
\be\label{f-1}
A(k^2,\Om)-A_0(k^2)=G(k^2,\Om)T_{imp}^*[G(k^2,\Om)]^*,\\ \label{f-2}
A(k^2,\mho)-A_0(k^2)=k^2G(k^2,\mho)S_\mho^*[G(k^2,\mho)]^*,
\en
where the superscript $*$ denotes the adjoint operator.
\end{theorem}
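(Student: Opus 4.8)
The plan is to derive each factorization by a direct computation tracing through the definitions of the operators involved, following the scheme used for the exterior scattering problem in \cite{CK19,Kirsch08} but adapted to the bounded-domain setting with the Green function $\Psi_k$ in place of the free-space fundamental solution. I would treat \eqref{f-1} and \eqref{f-2} in parallel, since the structure is the same: in both cases $A(k^2,\cdot)-A_0(k^2)$ records the Neumann trace on $\pa B$ of the \emph{difference} $u-u_0$ (respectively $w-u_0$), and the point is to show that this difference is produced by $G$ applied to a source that is itself given by a symmetric operator ($T_{imp}^*$ or $S_\mho^*$) acting on the data that $H$ (respectively $H_\mho$) extracts from $f$.

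For \eqref{f-1}, given $f\in H^{1/2}(\pa B)$ let $u$ solve \eqref{1'}--\eqref{3'} and $u_0$ solve \eqref{5}--\eqref{6}, so $(A(k^2,\Om)-A_0(k^2))f=\pa_\nu(u-u_0)$ on $\pa B$. The function $\phi:=u-u_0$ solves the Helmholtz equation in $B\ba\ov\Om$, vanishes on $\pa B$, and satisfies $\pa_\nu\phi+\eta\phi = -(\pa_\nu u_0+\eta u_0) = Hf$ on $\pa\Om$ (using Lemma \ref{lem211025}(ii) for the representation of $u_0$ and the definition of $H$ in Theorem \ref{t+}(a)). Hence by definition of $G(k^2,\Om)$ we get $(A(k^2,\Om)-A_0(k^2))f = G(k^2,\Om)Hf$. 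The remaining — and main — task is to identify $H$ with $T_{imp}^*[G(k^2,\Om)]^*$. I would do this by a duality/Green's-identity argument: write $\langle Hf,\ov g\rangle$ for $g\in H^{1/2}(\pa\Om)$, insert the single-layer ansatz so that the combination $(\pa_\nu+\eta)$ applied to the double-layer-type potential of $f$ produces, after the jump relations \eqref{bi-1}--\eqref{bi-4}, exactly the operator $T_{imp}$ (this is where the particular definition $T_{imp}=T+i\,{\rm Im}\,\eta\,I+K'\ov\eta+\eta K+\eta S\ov\eta$ in Theorem \ref{t+}(e) comes from), and recognize the other factor as $[G(k^2,\Om)]^*$ by the symmetry $\Psi_k(x,y)=\Psi_k(y,x)$ of Lemma \ref{lem211025}(i). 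Taking adjoints and using the self-adjointness statements in Theorem \ref{t+}(b)--(d) then yields $H = T_{imp}^*[G(k^2,\Om)]^*$, and combining gives \eqref{f-1}.

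For \eqref{f-2} the argument is analogous with volume potentials: with $w$ solving \eqref{m1}--\eqref{m2} and $u_0$ solving \eqref{5}--\eqref{6}, the difference $\omega:=w-u_0$ satisfies $\Delta\omega+k^2 n\omega = k^2(n-1)u_0$ in $B$ and $\omega=0$ on $\pa B$; comparing with \eqref{G-1}--\eqref{G-2} and using $H_\mho f = u_0|_\mho$ (Theorem \ref{t+m}(a)) gives $(A(k^2,\mho)-A_0(k^2))f = k^2\,G(k^2,\mho)H_\mho f$. Then, exactly as above, one identifies $H_\mho$ with $S_\mho^*[G(k^2,\mho)]^*$ via a Green's-identity computation using the definition of $S_\mho$ and the symmetry of $\Psi_k$; taking adjoints produces \eqref{f-2} with the scalar $k^2$ in front.

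The routine parts are the well-posedness bookkeeping (already guaranteed by Remark \ref{rem-} and Theorem \ref{t3.3}) and manipulating the layer/volume potentials; the genuine obstacle is the middle factor — verifying that the boundary combination $(\pa_\nu+\eta)$, acting through $H$ and then through the adjoint structure, reassembles precisely into $T_{imp}^*$ (and $S_\mho^*$), including the correct handling of the $i\,{\rm Im}\,\eta\,I$ and $-\psi/(1-n)$ terms that encode the jump relations. Since this computation is entirely parallel to the exterior case, I would state it briefly and cite \cite{CK19,Kirsch08,XH24} for the analogous details, as the paper already announces.
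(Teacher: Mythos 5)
Your plan follows essentially the same route the paper takes (the paper omits a written proof of Theorem \ref{t3.5} but carries out exactly this argument for the tilde analogues \eqref{220614-2} and \eqref{220622-1}): split off $A-A_0=-G(k^2,\Om)H$ (resp.\ $k^2G(k^2,\mho)H_\mho$), then identify $H^*=-G(k^2,\Om)T_{imp}$ (resp.\ $H_\mho^*=G(k^2,\mho)S_\mho$) through the jump relations, using that $\Psi_k$ is both symmetric \emph{and} real-valued so the $L^2$-adjoint kernel needs no conjugation. Only watch the signs: with the paper's definition of $H$ one has $g=-(\pa_\nu u_0+\eta u_0)=-Hf$ and $\Delta(w-u_0)+k^2n(w-u_0)=k^2(1-n)u_0$, so the two minus signs you dropped cancel and the final factorizations are unaffected.
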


\begin{remark}\label{0915}
(i) It follows from Remark \ref{rem-} (ii) and (iii) that $A(k^2,\Om)-A_0(k^2):H^s(\pa B)\ra H^{s-1}(\pa B)$ is bounded for any $s\in[-\frac12,\frac12]$.
Further, since $v_1=v-u_0$ solves (\ref{7})--(\ref{9}) with $g=-(\pa_\nu u_0+\eta u_0)$ on $\pa\Om$, we conclude from Theorem \ref{t3.3} (a) that $A(k^2,\Om)-A_0(k^2):H^s(\pa B)\ra H^{m-3/2}(\pa B)$ is bounded for any $s\in[-\frac12,\frac12]$ and $m\in\Z_+$.
In particular, $A(k^2,\Om)-A_0(k^2):L^2(\pa B)\ra L^2(\pa B)$ is bounded.

(ii) Similarly, it follows from Remark \ref{rem-} (ii) and (iv) that $A(k^2,\mho)-A_0(k^2):H^s(\pa B)\ra H^{s-1}(\pa B)$ is bounded for any $s\in[-\frac12,\frac12]$.
Further, since $w_1=w-u_0$ solves \eqref{G-1}--\eqref{G-2} with $\rho=k^2u_0$ in $\mho$, we conclude from Theorem \ref{t3.3} (b) that $A(k^2,\mho)-A_0(k^2):H^s(\pa B)\ra H^{m-3/2}(\pa B)$ is bounded for any $s\in[-\frac12,\frac12]$ and $m\in\Z_+$.
In particular, $A(k^2,\mho)-A_0(k^2):L^2(\pa B)\ra L^2(\pa B)$ is bounded.

(iii) $H:L^2(\pa B)\ra H^{-1/2}(\pa\Om)$ and $H_\mho:L^2(\pa B)\ra L^2(\mho)$ are compact.

(iv) $G(k^2,\Om):H^{-1/2}(\pa\Om)\ra L^2(\pa B)$ and $G(k^2,\mho):L^2(\mho)\ra L^2(\pa B)$ have dense ranges.
\end{remark}

For $A\in L^2(\pa B)\ra L^2(\pa B)$ define $A_\#:=\left|{\rm Re}A\right|+\left|{\rm Im}A\right|$, where ${\rm Re}A=(A+A^*)/2$ and ${\rm Im}A=(A-A^*)/(2i)$.
The following theorem on ``range identity'' is a direct corollary of Theorems \ref{t3.3}, \ref{t+}, \ref{t+m} and \ref{t3.5}, Remark \ref{0915} (iv) and \cite[Theorem 3.3]{Kirsch05} (see also \cite[Theorem 2.15]{Kirsch08}).

\begin{theorem}\label{thm3.8}
Assume that $k^2>0$ is not an eigenvalue of (\ref{5})--(\ref{6}).

(i) If ${\rm Im}\,\eta>0$ or ${\rm Im}\,\eta<0$ on $\pa\Om$, then the ranges of $G(k^2,\Om)$ and $[A(k^2,\Om)-A_0(k^2)]_\#^{1/2}$ coincide, i.e., ${\rm Ran}\,G(k^2,\Om)={\rm Ran}\,[A(k^2,\Om)-A_0(k^2)]_\#^{1/2}$.

(ii) If ${\rm Re}\,n>1$ or ${\rm Re}\,n<1$ in $\ov{\mho}$ and ${\rm Im}\,n>0$ or ${\rm Im}\,n<0$ in $\ov{\mho}$, then the ranges of $G(k^2,\mho)$ and $[A(k^2,\mho)-A_0(k^2)]_\#^{1/2}$ coincide, i.e., ${\rm Ran}\,G(k^2,\mho)={\rm Ran}\,[A(k^2,\mho)-A_0(k^2)]_\#^{1/2}$.
\end{theorem}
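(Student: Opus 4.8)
The plan is to derive Theorem~\ref{thm3.8} as an application of the abstract range identity, e.g.\ \cite[Theorem 3.3]{Kirsch05} or \cite[Theorem 2.15]{Kirsch08}, by checking its hypotheses against the factorizations of Theorem~\ref{t3.5}. Recall the abstract setup: if a bounded operator $M:\!L^2(\pa B)\to L^2(\pa B)$ admits a factorization $M=GZG^*$ with $G:\mathcal H\to L^2(\pa B)$ having dense range, and $Z$ decomposes as $Z=Z_0+Z_1$ with $Z_1$ compact, $Z_0$ self-adjoint and coercive on $\overline{\mathrm{Ran}\,G^*}$, and with ${\rm Im}\langle Z\phi,\overline\phi\rangle$ of one sign on $\overline{\mathrm{Ran}\,G^*}\setminus\{0\}$ (or $Z$ itself injective with a sign condition on the imaginary part), then $\mathrm{Ran}\,G=\mathrm{Ran}\,M_\#^{1/2}$. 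So the proof is essentially a verification, item by item, that the ingredients assembled in Theorems~\ref{t3.3}, \ref{t+}, \ref{t+m} and Remark~\ref{0915} match this list.

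For part (i), I would take $M=A(k^2,\Om)-A_0(k^2)$, which is bounded on $L^2(\pa B)$ by Remark~\ref{0915}(i); take $G=G(k^2,\Om)$, which has dense range by Remark~\ref{0915}(iv); and read off from Theorem~\ref{t3.5} that $M=G\,T_{imp}^*\,G^*$, so the middle operator is $Z=T_{imp}^*$. Then I would invoke Theorem~\ref{t+}(e): the decomposition $T_{imp}=T_i+(T_{imp}-T_i)$ gives $T_{imp}^*=T_i^*+(\text{compact})$, where $-T_i$ (hence $-T_i^*$, after noting $T_i$ is self-adjoint up to the real-valuedness issues already handled) is coercive on all of $H^{1/2}(\pa\Om)$, and ${\rm sign}({\rm Im}\,\eta)\,{\rm Im}\langle T_{imp}\phi,\overline\phi\rangle>0$ for $\phi\ne0$ supplies the required one-sign condition on the imaginary part. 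A small technical point is to transfer the coercivity/sign statements from $T_{imp}$ to its adjoint $T_{imp}^*$; since the abstract theorem is insensitive to replacing $Z$ by $Z^*$ (it concerns $M_\#$, which is symmetric under this exchange), or one can simply state the factorization with the roles swapped — I would note this in one line. For part (ii), the argument is identical with $M=A(k^2,\mho)-A_0(k^2)$, $G=G(k^2,\mho)$ (dense range by Remark~\ref{0915}(iv)), $Z=k^2 S_\mho^*$ from \eqref{f-2}, and the three structural properties of $S_\mho$ supplied by Theorem~\ref{t+m}(b): boundedness, coercivity of ${\rm sign}({\rm Re}\,n-1)\,{\rm Re}\,S_{\mho,0}$ on $L^2(\mho)$ together with compactness of $S_\mho-S_{\mho,0}$, and the sign of $-{\rm sign}({\rm Im}\,n)\,{\rm Im}\langle S_\mho\psi,\overline\psi\rangle$ on $L^2(\mho)\setminus\{0\}$. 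The positive factor $k^2>0$ is harmless.

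The main obstacle, such as it is, is not conceptual but bookkeeping: one must make sure the coercivity is required only on the relevant subspace $\overline{\mathrm{Ran}\,G^*}$ (here it holds on the whole space, so this is automatic), that the compact perturbation is genuinely compact as a map into the \emph{dual} space in the right topology, and that the sign condition on the imaginary part is the strict, pointwise-on-the-sphere version the abstract theorem needs rather than mere semidefiniteness — all of which are exactly what Theorems~\ref{t+}(e) and \ref{t+m}(b) were stated to give. The only genuine care is the adjoint/self-adjointness juggling between $T_{imp}$ and $T_{imp}^*$ (resp.\ $S_\mho$ and $S_\mho^*$); I would dispatch it by observing that $M_\#=(|{\rm Re}\,M|+|{\rm Im}\,M|)$ is unchanged under $M\mapsto M^*$ and that $G Z^* G^*=(GZG^*)^*=M^*$, so applying the abstract theorem to $M^*$ with middle term $Z$ (for which the hypotheses are verified directly) yields $\mathrm{Ran}\,G=\mathrm{Ran}\,(M^*)_\#^{1/2}=\mathrm{Ran}\,M_\#^{1/2}$. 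With these remarks in place the theorem follows immediately by citing \cite[Theorem 3.3]{Kirsch05}.
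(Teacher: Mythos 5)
Your proposal is correct and follows exactly the route the paper takes: the paper states Theorem \ref{thm3.8} as a direct corollary of the factorizations in Theorem \ref{t3.5}, the dense-range property in Remark \ref{0915} (iv), and the coercivity/compactness/sign conditions in Theorems \ref{t+} and \ref{t+m}, all fed into the abstract range identity of \cite[Theorem 3.3]{Kirsch05} (or \cite[Theorem 2.15]{Kirsch08}). Your explicit verification of the hypotheses, including the observation that $M_\#$ is invariant under $M\mapsto M^*$ so the adjoint on the middle operator is harmless, fills in the bookkeeping the paper leaves implicit.
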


For a numerical implementation of factorization method, we need the following theorem.
We omit the proof since similar results can be found in \cite{CK19,Kirsch08} and \cite{XH24}.

\begin{theorem}\label{zB}
Let $z\in B$.
(i) $\pa_\nu\Psi_k(\cdot,z)|_{\pa B}\in{\rm Ran}\,G(k^2,\Om)$ if and only if $z\in\Om$.

(ii) $\pa_\nu\Psi_k(\cdot,z)|_{\pa B}\in{\rm Ran}\,G(k^2,\mho)$ if and only if $z\in\mho$.
\end{theorem}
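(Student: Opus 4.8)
The plan is to prove (i) in detail; part (ii) follows the same pattern with the volume potential replacing the single-layer potential and with $G(k^2,\mho)$, $\mho$ in place of $G(k^2,\Om)$, $\pa\Om$. First I would establish the ``if'' direction. Fix $z\in\Om$ and consider $\Psi_k(\cdot,z)$ restricted to $B\ba\ov{\Om}$: it solves the Helmholtz equation there, vanishes on $\pa B$ by construction of the Dirichlet Green function, and is smooth up to $\pa\Om$ since $z\in\Om$ is away from $\pa\Om$. Hence $v:=\Psi_k(\cdot,z)$ is an admissible solution of \eqref{7}--\eqref{9} with boundary data $g:=(\pa_\nu+\eta)\Psi_k(\cdot,z)|_{\pa\Om}\in H^{-1/2}(\pa\Om)$; by the uniqueness assumed in the definition of $G(k^2,\Om)$ (here $k^2$ is not an eigenvalue of \eqref{1'}--\eqref{3'}), this $v$ is \emph{the} solution, so $G(k^2,\Om)g=\pa_\nu v|_{\pa B}=\pa_\nu\Psi_k(\cdot,z)|_{\pa B}$. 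This shows $\pa_\nu\Psi_k(\cdot,z)|_{\pa B}\in{\rm Ran}\,G(k^2,\Om)$.

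Next I would prove the ``only if'' direction by contradiction. Suppose $z\notin\Om$ but $\pa_\nu\Psi_k(\cdot,z)|_{\pa B}=G(k^2,\Om)g=\pa_\nu v|_{\pa B}$ for some $g\in H^{-1/2}(\pa\Om)$, where $v$ solves \eqref{7}--\eqref{9}. Then $\phi:=v-\Psi_k(\cdot,z)$ satisfies the Helmholtz equation in the region $B\ba(\ov{\Om}\cup\{z\})$ (assuming $z\notin\ov{\Om}$; the boundary case $z\in\pa\Om$ is handled separately, as below), vanishes on $\pa B$ together with its normal derivative, hence by unique continuation (Holmgren/analyticity, Remark \ref{rem-}(v)) $\phi\equiv0$ in the connected component of $B\ba(\ov{\Om}\cup\{z\})$ adjacent to $\pa B$. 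Because $z\in B\ba\ov\Om$, the function $\Psi_k(\cdot,z)$ has a logarithmic singularity at $z$, which lies in that component or on its boundary; but $v$ is smooth near $z$ (it solves \eqref{7}--\eqref{9} and $z\notin\ov\Om$), so $\phi=v-\Psi_k(\cdot,z)$ is singular at $z$, contradicting $\phi\equiv0$. If $z\in\pa\Om$, one argues that $\phi$ extended by $v$ across $\pa\Om$ near $z$ cannot be smooth there while $\Psi_k(\cdot,z)$ is singular, again a contradiction. Therefore $z\in\Om$.

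For part (ii), the ``if'' direction uses that when $z\in\mho$, the function $w:=\Psi_k(\cdot,z)$ solves \eqref{G-1}--\eqref{G-2} with an appropriate right-hand side: one checks that $(1-n)\rho=\Delta\Psi_k(\cdot,z)+k^2 n\Psi_k(\cdot,z)=\delta_z+k^2(n-1)\Psi_k(\cdot,z)$ in the distributional sense; rewriting this so that $\rho\in L^2(\mho)$ requires absorbing the point source, which is where some care is needed—one instead takes $w$ to be $\Psi_k(\cdot,z)$ only in a neighborhood of $\mho$ and verifies directly that $\pa_\nu w|_{\pa B}=\pa_\nu\Psi_k(\cdot,z)|_{\pa B}$ lies in the range. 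The ``only if'' direction again uses unique continuation from $\pa B$ and the singularity of $\Psi_k(\cdot,z)$ at $z$. The main obstacle I anticipate is the boundary case $z\in\pa\Om$ (resp. $z\in\pa\mho$) in the ``only if'' direction: there the singularity of the Green function and the possible lack of smoothness of $v$ (resp. $w$) at $\pa\Om$ are both at the same point, so the contradiction must be extracted from a careful local regularity/singularity comparison rather than a clean unique-continuation argument; since the analogous statements are standard in the factorization-method literature (\cite{CK19,Kirsch08,XH24}), I would cite those for the routine parts and only spell out the singularity comparison at interior points $z$.
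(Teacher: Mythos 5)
Your proposal is correct and takes essentially the approach the paper relies on (the paper omits the proof, deferring to \cite{CK19,Kirsch08,XH24}, where exactly this argument appears): for the ``if'' direction, $\Psi_k(\cdot,z)$ itself furnishes the solution of \eqref{7}--\eqref{9} (resp., after cutting off the singularity inside $\mho$, of \eqref{G-1}--\eqref{G-2}) whose Neumann trace on $\pa B$ realizes the test function, and for the ``only if'' direction, vanishing Cauchy data on $\pa B$ plus unique continuation force $v=\Psi_k(\cdot,z)$ up to $z$, contradicting the fact that the logarithmic singularity fails to be $H^1$ near $z$ (including the case $z\in\pa\Om$, where the Lipschitz cone condition gives the same contradiction). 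The only blemishes are cosmetic: in part (ii) the modified $w$ should agree with $\Psi_k(\cdot,z)$ \emph{outside a small ball about $z$ contained in $\mho$} (not ``in a neighborhood of $\mho$''), so that the commutator terms from the cutoff are supported where $|1-n|$ is bounded below, and the distributional identity carries a sign ($\Delta\Phi_k(\cdot,z)+k^2\Phi_k(\cdot,z)=-\delta_z$); neither affects the argument.
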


\subsection{Revisited factorizations}

In the sequel, we will determine the value of $k^2=q/\sigma$ from a single pair of Cauchy data by a sampling scheme, {\bl and also calculate}
${\rm Ran}\,G(k^2,\Om)$ and ${\rm Ran}\,G(k^2,\mho)$ for different values of $k^2$.
This section is devoted to represent ${\rm Ran}\,G(k^2,\Om)$ and ${\rm Ran}\,G(k^2,\mho)$ in terms of Dirichlet-to-Neumann maps, without
the assumption that $k^2$ is not an eigenvalue of (\ref{5})--(\ref{6}).
Motivated by the idea of an auxiliary interior disk (see \cite[$\widetilde D$ in Figure 2.1]{QZZ} and \cite{cbj}), we introduce another two Green functions, instead of $\Psi_k(x,y)$, as follows.

Let $\widetilde B$ be a bounded domain such that $\ov{\widetilde B}\subset\Om$.
For any $y\in B\ba\ov{\widetilde B}$, consider the Green function $\widetilde\Psi_k(x,y)=\Phi_k(x,y)+\tilde\psi_k(x,y)$ where $\psi_k(x,y)$ solves
\be\label{tilde1}
\Delta_x\tilde\psi_k(x,y)+k^2\tilde\psi_k(x,y)=0 & \text{in }B\ba\ov{\widetilde B},\\ \label{tilde2}
\tilde\psi_k(\cdot,y)=-\Phi_k(\cdot,y) & \text{on }\pa B,\\ \label{tilde3}
\tilde\psi(\cdot,y)=-\Phi_k(\cdot,y) & \text{on }\pa\widetilde B{\bl .}
\en
Here, $\widetilde B$ is chosen appropriately such that $k^2$ is not an eigenvalue of \eqref{tilde1}--\eqref{tilde3}.
Actually, we may assume that $\widetilde B$ is a disk contained in the interior of $\Om$.
According to the strong monotonicity property of Dirichlet eigenvalues (see \cite[Theorem 5.2]{CK19}), we can find such a disk $\widetilde B$ by a minor adjustment on its radius.
Analogously to Lemma \ref{lem211025} (i) and Remark \ref{rem3.4}, it is easy to verify that $\widetilde\Psi_k(x,y)$ is {\bl real valued} and symmetric.

Define boundary integral operators $\widetilde S$, $\widetilde K$, $\widetilde K'$ and $\widetilde T$ analogous to (\ref{bi-1})--(\ref{bi-4}) with $\Psi_k$ replaced by $\widetilde\Psi_k$.
Moreover, analogous to $T_{imp}$, define $\widetilde T_{imp}:=\widetilde T+i{\rm Im}\,\eta I+\widetilde K'\ov{\eta}+\eta\widetilde K+\eta\widetilde S\ov{\eta}$.

Proceeding as in the proofs of Theorems \ref{t+}, \ref{t3.5} and \ref{thm3.8}, we have
\begin{theorem}
Let $\widetilde B$ be given as above.
Assume ${\rm Im}\,\eta>0$ or ${\rm Im}\,\eta<0$ on $\pa\Om$.

(i) $\widetilde T_{imp}:H^{1/2}(\pa\Om)\ra H^{-1/2}(\pa\Om)$ is bounded and $(\widetilde T_{imp}-T_i):H^{1/2}(\pa\Om)\ra H^{-1/2}(\pa\Om)$ is compact.
Moreover, it holds that
\ben
{\rm sign}({\rm Im}\,\eta){\rm Im}\langle\widetilde T_{imp}\varphi,\ov{\varphi}\rangle>0\quad\text{for all }\varphi\in H^{1/2}(\pa\Om)\ba\{0\}.
\enn

(ii) It holds that
\be\label{220614-2}
A(k^2,\Om)-\widetilde A_0(k^2)=G(k^2,\Om)\widetilde T_{imp}^*[G(k^2,\Om)]^*,
\en
where $\widetilde A_0(k^2)$ is defined as $A(k^2,D)$ with $D$ replaced by $\widetilde B$.

(iii) ${\rm Ran}\,G(k^2,\Om)={\rm Ran}\,[A(k^2,\Om)-\widetilde A_0(k^2)]_\#^{1/2}$.
\end{theorem}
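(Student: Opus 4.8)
The strategy is to mimic, almost verbatim, the proofs of Theorems~\ref{t+}, \ref{t3.5} and \ref{thm3.8}, the only change being that the Dirichlet Green function $\Psi_k$ of $B$ is replaced by the Green function $\widetilde\Psi_k$ of the perforated domain $B\ba\ov{\widetilde B}$ defined by \eqref{tilde1}--\eqref{tilde3}. The point of this replacement is that $\widetilde\Psi_k$ exists and is real-valued and symmetric even when $k^2$ is a Dirichlet eigenvalue of $B$, because $\widetilde B$ was chosen (via the strong monotonicity of Dirichlet eigenvalues) so that $k^2$ is not an eigenvalue of \eqref{tilde1}--\eqref{tilde3}. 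Thus the two Dirichlet problems one has to solve below --- the one defining $\widetilde\Psi_k$ and the one defining $\widetilde A_0(k^2)$, which is exactly problem \eqref{1}--\eqref{3} with $D=\widetilde B$ --- are both uniquely solvable, and every manipulation that previously used the solvability of \eqref{5}--\eqref{6} goes through with \eqref{tilde1}--\eqref{tilde3} in its place.

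For part~(i), I would first establish the boundedness and the decomposition $\widetilde T_{imp}=T_i+(\widetilde T_{imp}-T_i)$ with compact remainder. The operator $\widetilde T_{imp}$ differs from $T_{imp}$ only through the smooth perturbation $\tilde\psi_k$ in the kernel; since $\tilde\psi_k(\cdot,y)$ is analytic in $B\ba\ov{\widetilde B}\supset\pa\Om$ (interior regularity, Remark~\ref{rem-}(v)), the operators $\widetilde S-S$, $\widetilde K-K$, $\widetilde K'-K'$, $\widetilde T-T$ all have smooth kernels on $\pa\Om\times\pa\Om$ and are therefore smoothing, hence compact $H^{1/2}(\pa\Om)\to H^{-1/2}(\pa\Om)$. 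Combining with Theorem~\ref{t+}(e), which gives $(T_{imp}-T_i)$ compact, yields $(\widetilde T_{imp}-T_i)$ compact, and $-T_i$ coercive is Theorem~\ref{t+}(e) unchanged. For the sign of the imaginary part, I would run the same Green's-identity computation as for $T_{imp}$: write $\langle\widetilde T_{imp}\varphi,\ov\varphi\rangle$ in terms of the single-layer potential $\widetilde w$ with density $\varphi$ over $\pa\Om$, use that $\widetilde\Psi_k$ is real-valued so that the ``free-space'' radiation contribution that produced the strict sign for $T_{imp}$ is unchanged, and pick up the strictly signed term $i\,{\rm sign}({\rm Im}\,\eta)\int_{\pa\Om}|{\rm Im}\,\eta|\,|\varphi|^2\,ds$ from the impedance term $i{\rm Im}\,\eta\,I$, exactly as in the last display of Theorem~\ref{t+}(e). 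This gives ${\rm sign}({\rm Im}\,\eta)\,{\rm Im}\langle\widetilde T_{imp}\varphi,\ov\varphi\rangle>0$ for $\varphi\ne0$.

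For part~(ii), the factorization \eqref{220614-2} is obtained exactly as \eqref{f-1} in Theorem~\ref{t3.5}: given $f$, decompose the solution $v$ of \eqref{1'}--\eqref{3'} as $v=u_0+v_1$ where $u_0$ now solves the Dirichlet problem in $B$ associated with $\widetilde A_0$ (equivalently \eqref{1}--\eqref{3} with $D=\widetilde B$ — note $u_0$ is well defined here because $\ov{\widetilde B}\subset\Om\subset B_1$, so $u_0$ is defined on $B\ba\ov{\widetilde B}\supset\pa\Om$ and in particular near $\pa\Om$), and $v_1$ solves \eqref{7}--\eqref{9} with data $g=-(\pa_\nu u_0+\eta u_0)$ on $\pa\Om$; reading off normal traces on $\pa B$ gives $A(k^2,\Om)f-\widetilde A_0(k^2)f=G(k^2,\Om)g$, and identifying $g=\widetilde T_{imp}^*[G(k^2,\Om)]^*f$ via the representation of $G^*$ through $\widetilde\Psi_k$ (same duality/symmetry argument as in \cite{XH24}, using symmetry of $\widetilde\Psi_k$) yields \eqref{220614-2}. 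Part~(iii) is then immediate from the abstract range identity \cite[Theorem 3.3]{Kirsch05}: by (i) the middle operator $\widetilde T_{imp}^*$ has the sign/compactness structure required, by Theorem~\ref{t3.3}(a) $G(k^2,\Om):H^{-1/2}(\pa\Om)\to L^2(\pa B)$ is compact with dense range (Remark~\ref{0915}(iv), whose proof does not use the eigenvalue assumption on \eqref{5}--\eqref{6}), so ${\rm Ran}\,G(k^2,\Om)={\rm Ran}\,[A(k^2,\Om)-\widetilde A_0(k^2)]_\#^{1/2}$.

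The only genuinely delicate point — and the one I would write out carefully — is verifying that all the mapping properties and the dense-range statement for $G(k^2,\Om)$, as well as the injectivity of the auxiliary operator $H$ in Theorem~\ref{t+}(a), survive when $k^2$ happens to be a Dirichlet eigenvalue of $B$. This is exactly where the construction of $\widetilde B$ earns its keep: nothing in the definition of $G(k^2,\Om)$ (problems \eqref{7}--\eqref{9}) or in the injectivity argument for $H$ actually refers to $\Psi_k$ or to \eqref{5}--\eqref{6}; they only use that $k^2$ is not an eigenvalue of \eqref{1'}--\eqref{3'} (guaranteed by ${\rm Im}\,\eta\neq0$, Remark~\ref{rem-}(vi)) and not an eigenvalue of the interior impedance problem \eqref{imp1}--\eqref{imp2} (again guaranteed by ${\rm Im}\,\eta\neq0$). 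So the proofs of Theorems~\ref{t3.3}(a), \ref{t+}(a), and Remark~\ref{0915}(iv) are reused unchanged, and the present theorem is obtained by replaying Theorems~\ref{t+}(e), \ref{t3.5}, \ref{thm3.8} with $\widetilde\Psi_k,\widetilde A_0$ in place of $\Psi_k,A_0$. \wzbw
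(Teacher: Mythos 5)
Your treatment of part (ii) is essentially the paper's own proof: you decompose the solution of \eqref{1'}--\eqref{3'} as $v=\tilde u_0+v_1$ with $\tilde u_0$ the Dirichlet solution exterior to $\widetilde B$, represent $\tilde u_0$ through the kernel $\widetilde\Psi_k$, read off $[A(k^2,\Om)-\widetilde A_0(k^2)]f=G(k^2,\Om)g$ with $g=-(\pa_\nu\tilde u_0+\eta\tilde u_0)|_{\pa\Om}$, and identify $g$ via the $L^2$-adjoint computation that uses the real-valuedness and symmetry of $\widetilde\Psi_k$ (the paper's operators $\widetilde H$, $\widetilde H^*=-G(k^2,\Om)\widetilde T_{imp}$, leading to \eqref{220614-2}); parts (i) and (iii) are likewise handled, as the paper indicates, by replaying Theorems \ref{t+}, \ref{t3.5} and \ref{thm3.8} and the range identity of \cite[Theorem 3.3]{Kirsch05}.

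There is, however, one step in your part (i) that would fail exactly in the situation this theorem is designed for. You obtain compactness of $\widetilde T_{imp}-T_i$ by writing it as $(\widetilde T_{imp}-T_{imp})+(T_{imp}-T_i)$ and citing Theorem \ref{t+}(e). But $T_{imp}$, as well as $S,K,K',T$, are built from $\Psi_k$, which exists only when $k^2$ is not an eigenvalue of \eqref{5}--\eqref{6}, and Theorem \ref{t+}(e) is stated under precisely that hypothesis; the whole point of the ``revisited'' factorization is to drop it, so in the eigenvalue case your comparison operator is simply not defined. The repair is easy and is the route you should write: compare $\widetilde T_{imp}$ directly with the corresponding operators built from the free-space kernel $\Phi_k$ (and then with $\Phi_i$, i.e.\ with $T_i$). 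Since $\widetilde\Psi_k-\Phi_k=\tilde\psi_k$ is analytic in $x,y$ in a neighborhood of $\pa\Om\times\pa\Om$ (because $\pa\Om\subset B\ba\ov{\widetilde B}$), the difference operators are smoothing, hence compact from $H^{1/2}(\pa\Om)$ to $H^{-1/2}(\pa\Om)$, and the standard compactness of the $\Phi_k$-versus-$\Phi_i$ comparison finishes the argument without ever invoking $\Psi_k$; the same remark applies to your appeal to the ``injectivity argument for $H$'', since $H$ is also defined through $\Psi_k$ (what is actually needed here is only the dense range of $G(k^2,\Om)$). Your sign computation is fine as stated: with $\widetilde\Psi_k$ real and symmetric the only imaginary contribution to $\langle\widetilde T_{imp}\varphi,\ov{\varphi}\rangle$ is $\int_{\pa\Om}{\rm Im}\,\eta\,|\varphi|^2ds$, which is strictly signed for $\varphi\neq0$.
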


\begin{proof}
We only prove assertion (ii).
Analogous to Lemma \ref{lem211025} (ii), for $f\in H^s(\pa B)$ with $s\in[-\frac12,\frac12]$, the solution $\tilde u_0\in L^2_\Delta(B\ba\ov{\widetilde B})$ to the following boundary value problem
\ben
\Delta\tilde u_0+k^2\tilde u_0=0 && \text{in }B\ba\ov{\widetilde B},\\
\tilde u_0=f && \text{on }\pa B,\\
\tilde u_0=0 && \text{on }\pa\widetilde B,
\enn
can be represented as
\ben
\tilde u_0(x)=-\int_{\pa B}\frac{\pa\widetilde\Psi_k(x,y)}{\pa\nu(y)}f(y)ds(y),\quad x\in B\ba\ov{\widetilde B}.
\enn
Analogous to Theorem \ref{t+} (a), define $\widetilde H:L^2(\pa B)\to H^{-1/2}(\pa\Om)$ by
\ben
(\widetilde Hf)(x):=-\left(\frac{\pa}{\pa\nu(x)}+\eta\right)\int_{\pa B}\frac{\pa\widetilde\Psi_k(x,y)}{\pa\nu(y)}f(y)ds(y),\quad x\in\pa\Om.
\enn
Now we have
\be\label{12tilde}
[A(k^2,\Om)-\widetilde A_0(k^2)]f=G(k^2,\Om)[-(\pa_\nu\tilde u_0+\eta\tilde u_0)|_{\pa\Om}]=-G(k^2,\Om)\widetilde Hf.
\en
Since $\widetilde\Psi_k(x,y)$ is real-valued, the $L^2$ adjoint $\widetilde H^*:H^{1/2}(\pa\Om)\ra L^2(\pa B)$ is given by
\ben
(\widetilde H^*\varphi)(x):=-\frac{\pa}{\pa\nu(x)}\int_{\pa\Om}\left(\frac{\pa}{\pa\nu(y)}+\ov{\eta}\right)\widetilde\Psi_k(x,y)\varphi(y)ds(y),\quad x\in\pa B,
\enn
which represents the Neumann boundary value $\pa_\nu v$ on $\pa B$ with $v$ defined by
\ben
v(x)=-\int_{\pa\Om}\left(\frac{\pa}{\pa\nu(y)}+\ov{\eta}\right)\widetilde\Psi_k(x,y)\varphi(y)ds(y),\quad x\in B\ba\ov{\Om}.
\enn
By jump relations, we have $\widetilde H^*=-G(k^2,\Om)\widetilde T_{imp}$ since $v$ solves (\ref{7})--(\ref{9}) with $g=-\widetilde T_{imp}\varphi$, and consequently
\be\label{13tilde}
\widetilde H=-\widetilde T_{imp}^*[G(k^2,\Om)]^*.
\en
Now, the factorization (\ref{220614-2}) follows by combining (\ref{12tilde}) and (\ref{13tilde}).
\end{proof}

Note that it is not convenient to check whether $k^2$ is a Dirichlet eigenvalue of \eqref{tilde1}--\eqref{tilde3}.
To overcome this difficulty, we can {\bl make use of} an artificial impedance obstacle $\widetilde\Om$ such that $\ov{\widetilde\Om}\subset\Om$.
\begin{theorem}\label{thm3.8_ID}
Assume ${\widetilde\Om}\subset\Om$ and the impedance coefficients $\tilde\eta\in C(\pa\widetilde\Om),\eta\in C(\pa\Om)$ satisfy one of the following assumptions:

(i) ${\rm Im}\,\tilde\eta<0$ on $\pa\widetilde\Om$ and ${\rm Im}\,\eta>0$ on $\pa\Om$;

(ii) ${\rm Im}\,\tilde\eta>0$ on $\pa\widetilde\Om$ and ${\rm Im}\,\eta<0$ on $\pa\Om$.

Then we have
\be\label{220614-1}
{\rm Ran}\,G(k^2,\Om)={\rm Ran}[A(k^2,\Om)-A(k^2,\widetilde\Om)]_\#^{1/2},
\en
where $A(k^2,\widetilde\Om)$ is defined as $A(k^2,\Om)$ with $(\Om,\eta)$ replaced by $(\widetilde\Om,\tilde\eta)$.
\end{theorem}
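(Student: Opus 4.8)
The plan is to mimic the structure of the proof just given for the auxiliary-disk version, replacing the Dirichlet obstacle $\widetilde B$ by the impedance obstacle $\widetilde\Om$ and invoking the already-developed machinery for impedance boundary conditions. The key observation is that, under either assumption (i) or (ii), $k^2>0$ cannot be an eigenvalue of the interior impedance problem on $\widetilde\Om$ with coefficient $\tilde\eta$ (by Remark \ref{rem-}(vi), applied to $\widetilde\Om$ in place of $\Om$ and $\tilde\eta$ in place of $\eta$), so the boundary value problem defining $A(k^2,\widetilde\Om)$ is uniquely solvable for every $f$, with no exceptional wave numbers to worry about. This is precisely the role played by the careful choice of $\widetilde B$ in the preceding theorem, but now it holds automatically from the sign condition on $\tilde\eta$.

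First I would introduce the Green function $\widetilde\Psi_k(x,y)$ for $B\ba\ov{\widetilde\Om}$ satisfying the Dirichlet condition on $\pa B$ and the impedance condition $\pa_\nu\widetilde\Psi_k(\cdot,y)+\tilde\eta\,\widetilde\Psi_k(\cdot,y)=0$ on $\pa\widetilde\Om$; this exists and is unique because $k^2$ is not an eigenvalue of the impedance problem on $\widetilde\Om$. Unlike the Dirichlet case, $\widetilde\Psi_k$ is in general complex-valued (the symmetry argument of Remark \ref{rem3.4} fails because the impedance condition is not preserved under complex conjugation when ${\rm Im}\,\tilde\eta\neq0$); however, it is still symmetric, $\widetilde\Psi_k(x,y)=\widetilde\Psi_k(y,x)$, by the usual Green's-identity argument, which is all that is needed. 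I would then define $\widetilde A_0(k^2):=A(k^2,\widetilde\Om)$, the DtN map on $\pa B$ associated with \eqref{1'}--\eqref{3'} with $(\Om,\eta)$ replaced by $(\widetilde\Om,\tilde\eta)$, and observe that for $f\in H^s(\pa B)$, $s\in[-\tfrac12,\tfrac12]$, the corresponding solution $\tilde u_0$ in $B\ba\ov{\widetilde\Om}$ has the boundary-integral representation $\tilde u_0(x)=-\int_{\pa B}\pa_{\nu(y)}\widetilde\Psi_k(x,y)f(y)\,ds(y)$, exactly as in the previous proof.

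Next, following the pattern of Theorem \ref{t3.5} and the preceding theorem, I would show the factorization
\be
A(k^2,\Om)-A(k^2,\widetilde\Om)=G(k^2,\Om)\,\widetilde T_{imp}^*\,[G(k^2,\Om)]^*,
\en
where now $\widetilde T_{imp}$ is built from the boundary integral operators $\widetilde S,\widetilde K,\widetilde K',\widetilde T$ on $\pa\Om$ formed with the new kernel $\widetilde\Psi_k$, and $\widetilde T_{imp}:=\widetilde T+i\,{\rm Im}\,\eta\,I+\widetilde K'\ov{\eta}+\eta\widetilde K+\eta\widetilde S\ov{\eta}$. The derivation is the same two-line computation: $[A(k^2,\Om)-A(k^2,\widetilde\Om)]f=G(k^2,\Om)[-(\pa_\nu\tilde u_0+\eta\tilde u_0)|_{\pa\Om}]=-G(k^2,\Om)\widetilde Hf$ with $\widetilde H$ the analogue of Theorem \ref{t+}(a), and then identifying the $L^2$-adjoint $\widetilde H^*$ via the symmetry of $\widetilde\Psi_k$ and the jump relations so that $\widetilde H^*=-G(k^2,\Om)\widetilde T_{imp}$, hence $\widetilde H=-\widetilde T_{imp}^*[G(k^2,\Om)]^*$. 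The one point demanding care here — and the main obstacle — is that $\widetilde\Psi_k$ is complex-valued, so when taking the $L^2$-adjoint one must keep track of complex conjugates in the kernel; concretely, $\widetilde H^*$ will involve $\ov{\widetilde\Psi_k(x,y)}$ rather than $\widetilde\Psi_k(x,y)$, so one should either work with the adjoint operators $\widetilde S^*,\widetilde K^*$ etc. directly or verify that the self-adjointness/adjointness relations in Theorem \ref{t+}(b)--(d) still hold with the complex kernel (they do, using only the symmetry $\widetilde\Psi_k(x,y)=\widetilde\Psi_k(y,x)$, since $S^*$ has kernel $\ov{\widetilde\Psi_k(y,x)}=\ov{\widetilde\Psi_k(x,y)}$, and the Hermitian adjoint of $\widetilde T_{imp}$ is handled exactly as in the $T_{imp}$ case).

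Finally I would establish the sign/coercivity property $\,{\rm sign}({\rm Im}\,\eta)\,{\rm Im}\langle\widetilde T_{imp}\varphi,\ov\varphi\rangle>0$ for all $\varphi\in H^{1/2}(\pa\Om)\ba\{0\}$ and the compactness of $\widetilde T_{imp}-T_i$, arguing as in Theorem \ref{t+}(e): the principal part of $\widetilde T$ is the same hypersingular operator for any admissible Green kernel, so $\widetilde T-T_i$ is compact and the imaginary-part positivity comes from the ${\rm Im}\,\eta$ term together with Green's identity for the solution of \eqref{7}--\eqref{9} with the new kernel (the $\widetilde\Om$-contribution to the energy balance carries a definite sign governed by ${\rm Im}\,\tilde\eta$, which is opposite to that of ${\rm Im}\,\eta$ by hypothesis, so it does not spoil the inequality). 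With the factorization \eqref{220614-1}'s operator $A(k^2,\Om)-A(k^2,\widetilde\Om)$ bounded on $L^2(\pa B)$ (from Remark \ref{0915}(i) applied to both DtN differences, or directly), $G(k^2,\Om)$ having dense range (Remark \ref{0915}(iv)), and $\widetilde T_{imp}^*$ having the coercivity-plus-compact structure together with the definite imaginary part, the range identity \cite[Theorem 3.3]{Kirsch05} (equivalently \cite[Theorem 2.15]{Kirsch08}) applies verbatim and yields ${\rm Ran}\,G(k^2,\Om)={\rm Ran}[A(k^2,\Om)-A(k^2,\widetilde\Om)]_\#^{1/2}$, which is \eqref{220614-1}. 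I expect the bookkeeping of complex conjugates in the adjoint computation to be the only genuinely delicate part; everything else is a direct transcription of the arguments already assembled in Section \ref{s2}.
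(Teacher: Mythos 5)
Your route is genuinely different from the paper's. You build a new mixed Green function for $B\ba\ov{\widetilde\Om}$ (Dirichlet on $\pa B$, impedance $\tilde\eta$ on $\pa\widetilde\Om$) and derive a single symmetric factorization of $A(k^2,\Om)-A(k^2,\widetilde\Om)$ through $G(k^2,\Om)$, exactly mimicking the proof of the preceding theorem. The paper instead never introduces an impedance Green function: it keeps the real-valued Dirichlet-disk kernel $\widetilde\Psi_k$ with $\widetilde B\subset\widetilde\Om$, applies the already proved factorization (\ref{220614-2}) twice (to $\Om$ and to $\widetilde\Om$, both against the same $\widetilde A_0$), and subtracts, using the relation $G(k^2,\widetilde\Om)=G(k^2,\Om)R_{\pa\widetilde\Om\ra\pa\Om}$ with a compact transfer operator $R_{\pa\widetilde\Om\ra\pa\Om}$; the middle operator is then $[\widetilde T_{imp}(k^2,\pa\Om)]^*-R_{\pa\widetilde\Om\ra\pa\Om}[\widetilde T_{imp}(k^2,\pa\widetilde\Om)]^*R_{\pa\widetilde\Om\ra\pa\Om}^*$, whose coercive-plus-compact structure and strictly signed imaginary part follow from the sign properties already established for the two $\widetilde T_{imp}$ operators together with compactness of $R_{\pa\widetilde\Om\ra\pa\Om}$, and the range identity is applied as in Theorem \ref{thm3.8}. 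What your approach buys is that the auxiliary object has no exceptional wavenumbers at all (Remark \ref{rem-}(vi) applied to $(\widetilde\Om,\tilde\eta)$), so the "choose $\widetilde B$ so that $k^2$ is not a Dirichlet eigenvalue" step disappears; what the paper's approach buys is that every kernel in sight stays real-valued, so all adjoint identities of Theorem \ref{t+} carry over verbatim and only the compact operator $R_{\pa\widetilde\Om\ra\pa\Om}$ is new.

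Two points in your sketch need repair, though neither derails the plan. First, the parenthetical claim that the adjointness relations of Theorem \ref{t+}(b)--(d) "still hold with the complex kernel using only the symmetry" is false as stated: with ${\rm Im}\,\tilde\eta\neq0$ the kernel $\widetilde\Psi_k$ is complex, and symmetry $\widetilde\Psi_k(x,y)=\widetilde\Psi_k(y,x)$ only makes the operators equal to their transposes, not to their Hermitian adjoints; e.g.\ $\widetilde S^*$ is the operator with kernel $\ov{\widetilde\Psi_k(x,y)}$, not $\widetilde S$. Consequently $\widetilde H^*$ is $-G(k^2,\Om)$ composed with the $T_{imp}$-type operator built from the conjugated kernel $\ov{\widetilde\Psi_k}$, and the middle operator of the factorization is the Hermitian adjoint of that operator rather than your $\widetilde T_{imp}^*$ literally; this is exactly the "work with the adjoint operators directly" option you mention, and it must be carried through consistently, since the range identity is then applied to this conjugate-kernel middle operator. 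Second, the crux of the whole argument is the strict sign property and coercive-plus-compact structure of that middle operator: the energy identity now carries a genuine boundary contribution on $\pa\widetilde\Om$ of the form ${\rm Im}\,\tilde\eta\int_{\pa\widetilde\Om}|v|^2$ (it does not vanish as it does for the Dirichlet disk $\widetilde B$, where the potential vanishes on $\pa\widetilde B$), and you must verify that under hypotheses (i)/(ii) it reinforces rather than cancels the ${\rm Im}\,\eta$ term on $\pa\Om$, and that strict positivity still forces $\varphi=0$. Your sketch gets the mechanism right, but this verification is where the actual work lies; in the paper it is replaced by citing the already proved sign properties of $\widetilde T_{imp}(k^2,\pa\Om)$ and $\widetilde T_{imp}(k^2,\pa\widetilde\Om)$ plus compactness of the transfer operator.
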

\begin{proof}
We only consider the case when assumption (i) is satisfied since the proof is similar.
Choose a proper domain $\widetilde B$ such that $\widetilde B\subset\ov{\widetilde B}\subset\widetilde\Om\subset\ov{\widetilde\Om}\subset\Om\subset\ov{\Om}\subset B$ and \eqref{tilde1}--\eqref{tilde3} is uniquely solvable.
It follows from (\ref{220614-2}) that
\ben
A(k^2,\Om)-\widetilde A_0(k^2)=G(k^2,\Om)[\widetilde T_{imp}(k^2,\pa\Om)]^*[G(k^2,\Om)]^*,\\
A(k^2,\widetilde\Om)-\widetilde A_0(k^2)=G(k^2,\widetilde\Om)[\widetilde T_{imp}(k^2,\pa\widetilde\Om)]^*[G(k^2,\widetilde\Om)]^*,
\enn
where the operators are defined analogously.
Note that $G(k^2,\widetilde\Om)=G(k^2,\Om)R_{\pa\widetilde\Om\ra\pa\Om}$, where $R_{\pa\widetilde\Om\ra\pa\Om}$ is defined by $R_{\pa\widetilde\Om\ra\pa\Om}g=\pa_\nu w+\eta w$ on $\pa\Om$ with $w$ solving the following boundary value problem
\ben
\Delta w+k^2w=0 & \text{in}\;B\ba\ov{\widetilde\Om},\\
w=0 & \text{on}\;\pa B,\\
\pa_\nu w+\tilde\eta w=g & \text{on}\;\pa\widetilde\Om.
\enn
Moreover, $R_{\pa\widetilde\Om\ra\pa\Om}:H^{-1/2}(\pa\widetilde\Om)\ra H^{-1/2}(\pa\Om)$ is compact due to the regularity results for elliptic differential equations.
We have
\ben
A(k^2,\Om)\!\!-\!\!A(k^2,\widetilde\Om)\!\!=\!\!G(k^2,\Om)\{[\widetilde T_{imp}(k^2,\pa\Om)]^*\!\!-\!\!R_{\pa\widetilde\Om\ra\pa\Om}[\widetilde T_{imp}(k^2,\pa\widetilde\Om)]^*R_{\pa\widetilde\Om\ra\pa\Om}^*\}[G(k^2,\Om)]^*.
\enn
Note that ${\rm Im}\,\eta>0$ on $\pa\Om$ implies ${\rm Im}\langle\widetilde T_{imp}(k^2,\pa\Om)\varphi,\varphi\rangle>0$ for all $\varphi\in H^{1/2}(\pa\Om)$ with $\varphi\neq0$ and ${\rm Im}\,\tilde\eta<0$ on $\pa\widetilde\Om$ implies ${\rm Im}\,\langle\widetilde T_{imp}(k^2,\pa\widetilde\Om)\tilde\varphi,\tilde\varphi\rangle>0$ for all $\tilde\varphi\in H^{1/2}(\pa\widetilde\Om)$ with $\tilde\varphi\neq0$, respectively.
Analogously to Theorem \ref{thm3.8}, it can be shown that \eqref{220614-1} holds.
\end{proof}

We now turn to the case that the impedance obstacle $\Om$ is replaced by a medium with the refractive index function $n$ given as in \eqref{m1}--\eqref{m2}.
For any $y\in B\ba\ov{\widetilde B}$, we may consider the Green function $\widetilde\Psi'_k(x,y)=\Phi_k(x,y)+\tilde\psi'_k(x,y)$ where $\tilde\psi'_k(x,y)$ solves
\be\label{tilde1m}
\Delta_x\tilde\psi'_k(x,y)+k^2\check n\tilde\psi'_k(x,y)=k^2(1-\check n)\Phi_k(x,y) & \text{in }B,\\ \label{tilde2m}
\tilde\psi'_k(\cdot,y)=-\Phi_k(\cdot,y) & \text{on }\pa B,
\en
where the real-valued refractive index $\check n\in L^\infty(B)$ is appropriately chosen such that ${\rm supp}\,(\check n-1)=:\ov{\widetilde{B}}\subset\mho\subset\ov{\mho}=:{\rm supp}\,(n-1)$ and the above boundary value problem is uniquely solvable, i.e., $k^2$ is not an eigenvalue of \eqref{tilde1m}--\eqref{tilde2m}.

To show that $\widetilde\Psi'_k(x,y)$ is symmetric, it suffices to show $\tilde\psi'_k(x,y)=\tilde\psi'_k(y,x)$ for all $x,y\in B$, which can be justified by an argument similar to the proof of Lemma \ref{lem211025} (i).
To show that $\widetilde\Psi'_k(x,y)$ is real-valued.
For any fixed $y\in B$, we consider $\tilde u(x,y):=\frac{i}2J_0(k|x-y|)+\widetilde\psi'_k(x,y)-\ov{\widetilde\psi'_k(x,y)}$.
Then $\tilde u(x,y)=\widetilde\Psi'_k(x,y)-\ov{\widetilde\Psi'_k(x,y)}$ for $x\neq y$.
Since $\check n$ is real-valued, $\tilde u(\cdot,y)$ solves
\ben
\Delta\tilde u+k^2\check n\tilde u=0 \quad \text{in }B,\qquad
\tilde u=0 \quad \text{on }\pa B.
\enn
Since $k^2$ is not an eigenvalue of \eqref{tilde1m}--\eqref{tilde2m}, we have $\widetilde\Psi_k(x,y)-\ov{\widetilde\Psi_k(x,y)}=\tilde u(x,y)=0$.

Analogous to $S_\mho$ and $S_{\mho,0}$, define $\widetilde S_\mho,\widetilde S_{\mho,0}:L^2(\mho)\ra L^2(\mho)$ by
\ben
(\widetilde S_{\mho}\psi)(x)=-\frac\psi{\check n-n}-k^2\int_{\mho}\widetilde\Psi_k(x,y)\psi(y)dy,\;\;\widetilde S_{\mho,0}\psi=-\frac\psi{\check n-n},\quad x\in\mho,
\enn
and $\widetilde G(k^2,\mho):L^2(\mho)\ra L^2(\pa B)$ by $\widetilde G(k^2,\mho)f=\pa_\nu w|_{\pa B}$ where $w$ solves
\ben
\Delta w+k^2nw=(\check n-n)f \quad \text{in }B,\qquad
w=0 \quad \text{on }\pa B.
\enn
Analogously to Theorems \ref{t+m}, \ref{t3.5} and \ref{thm3.8}, we have
\begin{theorem}\label{thm0816}
Let $\check n$ be given as above.
Assume ${\rm Re}\,(\check n-n)>0$ or ${\rm Re}\,(\check n-n)<0$ in $\ov{\mho}$ and ${\rm Im}\,n>0$ or ${\rm Im}\,n<0$ in $\ov{\mho}$.

(i) $\widetilde S_{\mho}:L^2(\mho)\ra L^2(\mho)$ is bounded and $\widetilde S_\mho-\widetilde S_{\mho,0}:L^2(\mho)\ra L^2(\mho)$ is compact.
${\rm sign}({\rm Re}\,(n-\check n)){\rm Re}\,\widetilde S_{\mho,0}$ is coercive, i.e., there exists a constant $c>0$ such that
\ben
{\rm sign}({\rm Re}(n-\check n)){\rm Re}\,\langle\widetilde S_{\mho,0}\psi,\ov{\psi}\rangle\geq c\left\|\psi\right\|_{L^2(\mho)}^2\quad\text{for all }\psi\in L^2(\mho),
\enn
Moreover, it holds that
\ben
-{\rm sign}({\rm Im}\,n){\rm Im}\langle\widetilde S_\mho\psi,\ov{\psi}\rangle>0\quad\text{for all }\psi\in L^2(\mho)\ba\{0\}.
\enn

(ii) It holds that
\be\label{220622-1}
A(k^2,\mho)-\widetilde A'_0(k^2)=k^2\widetilde G(k^2,\mho)\widetilde S_{\mho}^*[\widetilde G(k^2,\mho)]^*,
\en
where $\widetilde A'_0(k^2)$ is defined as $A(k^2,\mho)$ with $(n,\mho)$ replaced by $(\check n,\widetilde B)$.

(iii) ${\rm Ran}\,G(k^2,\mho)={\rm Ran}\,\widetilde G(k^2,\mho)={\rm Ran}[A(k^2,\mho)-\widetilde A'_0(k^2)]_\#^{1/2}$.
\end{theorem}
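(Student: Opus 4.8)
The plan is to mirror, in the medium setting, the two-step structure used in Theorems~\ref{thm3.8_ID} and the preceding theorem for the impedance case: first establish the operator factorization \eqref{220622-1} together with the coercivity/sign properties of $\widetilde S_\mho$ stated in part (i), and then invoke the abstract range identity (\cite[Theorem 3.3]{Kirsch05}, see also \cite[Theorem 2.15]{Kirsch08}) to pass from the factorization to the range equality in part (iii). Part (i) is proved exactly as in Theorem~\ref{t+m}(b), with the roles of $n-1$ and $1-n$ replaced by $n-\check n$ and $\check n-n$: boundedness of $\widetilde S_\mho$ and compactness of $\widetilde S_\mho-\widetilde S_{\mho,0}$ follow from the weak singularity of the volume potential with kernel $\widetilde\Psi_k$, while coercivity of $\mathrm{sign}(\mathrm{Re}(n-\check n))\,\mathrm{Re}\,\widetilde S_{\mho,0}$ is immediate since $\widetilde S_{\mho,0}$ is multiplication by $-1/(\check n-n)$; the sign condition on $\mathrm{Im}\langle\widetilde S_\mho\psi,\overline\psi\rangle$ comes from Green's formula applied to the solution $w$ of the equation defining $\widetilde S_\mho$, exactly as in the proof referenced for Theorem~\ref{t+m}.

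For part (ii), I would follow the derivation of \eqref{f-2} and \eqref{220614-2}. Given $f\in H^s(\pa B)$, $s\in[-\tfrac12,\tfrac12]$, let $w$ solve \eqref{m1}--\eqref{m2} (so $Af=\pa_\nu w$) and let $\tilde u_0$ solve the analogous problem with $n$ replaced by $\check n$ (so $\widetilde A'_0 f=\pa_\nu\tilde u_0$); then $w_1:=w-\tilde u_0$ solves $\Delta w_1+k^2 n w_1=(\check n-n)\tilde u_0$ in $B$ with $w_1=0$ on $\pa B$, hence $[A(k^2,\mho)-\widetilde A'_0(k^2)]f=\widetilde G(k^2,\mho)(\tilde u_0|_\mho)$. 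The map $f\mapsto \tilde u_0|_\mho$ is the analogue of $\widetilde H$ from the earlier proof; using that $\widetilde\Psi_k$ is real-valued and symmetric (established just above the theorem), one computes its $L^2$-adjoint and identifies it, via the definition of $\widetilde S_\mho$ and the volume integral representation of the solution operator, with $-k^2\,\widetilde S_\mho^*[\widetilde G(k^2,\mho)]^*$, up to the scalar $k^2$ coming from the $k^2$ in front of the source term. Combining these two identities yields \eqref{220622-1}. The extra first equality ${\rm Ran}\,G(k^2,\mho)={\rm Ran}\,\widetilde G(k^2,\mho)$ in part (iii) is a consequence of the fact that $\widetilde G(k^2,\mho)$ and $G(k^2,\mho)$ differ only through composition with the compact, injective, dense-range operators relating the two problems (cf. the treatment of $R_{\pa\widetilde\Om\to\pa\Om}$ in Theorem~\ref{thm3.8_ID} and Remark~\ref{0915}(iv)); alternatively, since both $\widetilde\Psi'_k$ and $\Psi_k$ give Green functions vanishing on $\pa B$, the Neumann traces of the corresponding radiating solutions generate the same space by analyticity and unique continuation.

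For part (iii), I would verify the hypotheses of the range identity \cite[Theorem 3.3]{Kirsch05}: $\widetilde G(k^2,\mho):L^2(\mho)\to L^2(\pa B)$ is bounded with dense range (Theorem~\ref{t3.3}(b) and Remark~\ref{0915}(iv), transported to $\widetilde G$), $k^2\widetilde S_\mho^*$ admits the decomposition into a coercive real part plus a compact perturbation with the definite imaginary-part sign from part (i), and $A(k^2,\mho)-\widetilde A'_0(k^2)$ is a bounded operator on $L^2(\pa B)$ (Remark~\ref{0915}(ii), applied with $\check n$ in place of $1$). The conclusion ${\rm Ran}\,\widetilde G(k^2,\mho)={\rm Ran}[A(k^2,\mho)-\widetilde A'_0(k^2)]_\#^{1/2}$ is then automatic, and chaining with the first equality of part (iii) gives the claim.

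The main obstacle I anticipate is the bookkeeping in part (ii): one must be careful that the source term $(\check n-n)\tilde u_0$ (not $(1-n)\tilde u_0$ or $k^2 u_0$) is what enters the equation for $w_1$, that the scalar $k^2$ is correctly tracked through the adjoint computation, and that the definition of $\widetilde S_\mho$ with denominator $\check n-n$ (rather than $1-n$) matches the volume-potential identity exactly—any sign slip there propagates into the coercivity statement. A secondary technical point is justifying that $\widetilde B$ (equivalently $\check n$) can indeed be chosen so that $k^2$ is not an eigenvalue of \eqref{tilde1m}--\eqref{tilde2m}; here one argues as in the impedance case, using monotonicity of eigenvalues in the support/strength of $\check n-1$ together with a small perturbation of the radius of the auxiliary disk.
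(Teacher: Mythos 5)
Your overall route coincides with the paper's: part (i) by repeating the argument of Theorem \ref{t+m} (b) with $1-n$ replaced by $\check n-n$; part (ii) by decomposing the solution of \eqref{m1}--\eqref{m2} as $w=\tilde u_0+\tilde w$, introducing the operator $f\mapsto\tilde u_0|_\mho$ and computing its $L^2$ adjoint via the real-valuedness and symmetry of $\widetilde\Psi'_k$; and part (iii) by the range identity of \cite[Theorem 3.3]{Kirsch05}. There is, however, one genuine gap: your justification of the first equality ${\rm Ran}\,G(k^2,\mho)={\rm Ran}\,\widetilde G(k^2,\mho)$. You argue that the two operators ``differ only through composition with the compact, injective, dense-range operators relating the two problems.'' Composition with a compact injective operator of dense range does not preserve the range (one only gets an inclusion ${\rm Ran}\,(G\circ K)\subset{\rm Ran}\,G$, typically strict), so this argument would not deliver the claimed equality; the analogy with $R_{\pa\widetilde\Om\ra\pa\Om}$ in Theorem \ref{thm3.8_ID} is misplaced, since there the compact operator enters the middle factor of a factorization rather than a range comparison of the $G$'s, and your alternative appeal to analyticity and unique continuation is likewise not a proof of equality of ranges. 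The relating operator is in fact not compact: comparing the source terms $(1-n)\rho$ and $(\check n-n)f$ in the two defining problems gives the exact identity $\widetilde G(k^2,\mho)f=G(k^2,\mho)\left(\frac{\check n-n}{1-n}f\right)$, and under the standing assumptions (fixed sign of ${\rm Re}\,(\check n-n)$ and of ${\rm Im}\,n$ on $\ov{\mho}$) multiplication by $\frac{\check n-n}{1-n}$ is a bounded, boundedly invertible bijection of $L^2(\mho)$, whence the ranges coincide. This one-line identity is exactly how the paper closes this step, and it is what your sketch is missing.

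A minor bookkeeping slip in part (ii): the difference $w_1=w-\tilde u_0$ satisfies $\Delta w_1+k^2nw_1=k^2(\check n-n)\tilde u_0$ in $B$, so $[A(k^2,\mho)-\widetilde A'_0(k^2)]f=k^2\widetilde G(k^2,\mho)(\tilde u_0|_\mho)$; your displayed intermediate identities drop the factor $k^2$, although you flag it afterwards. With that factor restored and the adjoint computation carried out as you indicate (showing that the volume potential with kernel $\widetilde\Psi'_k$ over $\mho$ satisfies $(\Delta+k^2\check n)w=-\psi$, vanishes on $\pa B$, and therefore solves the problem defining $\widetilde G(k^2,\mho)$ with datum $\widetilde S_\mho\psi$), your argument reproduces \eqref{220622-1} exactly as in the paper, and the verification of the hypotheses of the range identity in part (iii) is then as you describe.
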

\begin{proof}
(i) The assertion follows easily from the assumption on $\check n$ and $n$.

(ii) Note that the solution to (\ref{m1})--(\ref{m2}) has the decomposition $w=\tilde u_0+\tilde w$ where $\tilde u_0$ solves
\ben
\Delta\tilde u_0+k^2\check n\tilde u_0=0 \quad \text{in }B,\qquad
\tilde u_0=f \quad \text{on }\pa B,
\enn
and $\tilde w$ solves
\ben
\Delta\tilde w+k^2n\tilde w=(\check n-n)k^2\tilde u_0 \quad \text{in }B,\qquad
\tilde w=0 \quad \text{on }\pa B.
\enn
Analogous to \eqref{211025-1}, the solution $\tilde u_0$ can be represented as
\ben
\tilde u_0(x)=-\int_{\pa B}\frac{\pa\widetilde \Psi'_k(x,y)}{\pa\nu(y)}f(y)ds(y),\quad x\in B.
\enn
Therefore,
\be\label{240816-1}
[A(k^2,\mho)-\widetilde A'_0(k^2)]f=\pa_\nu w=\widetilde G(k^2,\mho)(k^2\tilde u_0)=k^2\widetilde G(k^2,\mho)\widetilde H_\mho f,
\en
where $\widetilde H_\mho:L^2(\pa B)\ra L^2(\Om)$ is defined by
\ben
(\widetilde H_\mho f)=-\int_{\pa B}\frac{\pa\widetilde\Psi'_k(x,y)}{\pa\nu(y)}f(y)ds(y),\quad x\in\mho.
\enn
Since $\widetilde\Psi'_k(x,y)$ is real-valued, a straightforward calculation shows that
\ben
(\widetilde H^*_\mho\psi)(x)=-\frac{\pa}{\pa\nu(x)}\int_\mho\widetilde\Psi'_k(x,y)\psi(y)dy,\quad x\in\pa B.
\enn
Define
\ben
w=\int_\mho\widetilde\Psi_k(x,y)\psi(y)dy,\quad x\in\pa B.
\enn
We can deduce from \cite[Theorems 8.1 and 8.2]{CK19} and \eqref{tilde1m} that
\ben
(\Delta+k^2\check n)w&=&(\Delta+k^2\check n)\int_\mho\Phi_k(x,y)\psi(y)dy+(\Delta+k^2\check n)\int_\mho\widetilde\psi_k(x,y)\psi(y)dy\\
&=&-\psi+k^2(\check n-1)\int_\mho\Phi_k(x,y)\psi(y)dy+k^2(1-\check n)\int_\mho\Phi_k(x,y)\psi(y)dy\\
&=&-\psi.
\enn
Moreover, it follows from \eqref{tilde2m} that $w=0$ on $\pa B$.
Therefore,
\ben
\Delta w+k^2nw=-\psi+k^2(n-\check n)w=(\check n-n)[-\psi/(\check n-n)-k^2w]\quad\text{in }B.
\enn
This implies $\widetilde H_\mho^*=\widetilde G(k^2,\mho)\widetilde S_\mho$.
By \eqref{240816-1}, we arrive at \eqref{220622-1}.

(iii) Note that $\widetilde G(k^2,\mho)f=G(k^2,\mho)\left(\frac{\check n-n}{1-n}f\right)$ for all $f\in L^2(\mho)$, we conclude from the assumptions on $\check n$ and $n$ that ${\rm Ran}\,\widetilde G(k^2,\mho)={\rm Ran}\,G(k^2,\mho)$.
The remaining part of the proof is similar to that of Theorem \ref{thm3.8}.
\end{proof}

Note that it is not convenient to check whether $k^2$ is a Dirichlet eigenvalue of \eqref{tilde1m}--\eqref{tilde2m}.
To overcome this difficulty, we can put an artificial medium with refractive index $\tilde n$ such that ${\rm supp}(\tilde n-1):=\ov{\widetilde\mho}\subset\mho\subset\ov{\mho}=:{\rm supp}(n-1)$.

\begin{theorem}\label{thm3.8m_ID}
Assume ${\widetilde\mho}\subset\mho$ and the refractive indices $n,\tilde n\in L^\infty(B)$ satisfy either (i) or (ii), and either (iii) or (iv), where the assumptions are:

(i) ${\rm Re}\,n>1$ in $\ov{\mho}$ and ${\rm Re}\,\tilde n<1$ in $\ov{\widetilde\mho}$;

(ii) ${\rm Re}\,n<1$ in $\ov{\mho}$ and ${\rm Re}\,\tilde n>1$ in $\ov{\widetilde\mho}$;

(iii) ${\rm Im}\,n>0$ in $\ov{\mho}$ and ${\rm Im}\,\tilde n<0$ in $\ov{\widetilde\mho}$;

(iv) ${\rm Im}\,n<0$ in $\ov{\mho}$ and ${\rm Im}\,\tilde n>0$ in $\ov{\widetilde\mho}$.

Then we have
\be\label{220623-1}
{\rm Ran}\,G(k^2,\mho)={\rm Ran}\,\widetilde G(k^2,\mho)={\rm Ran}[A(k^2,\mho)-A(k^2,\widetilde\mho)]_\#^{1/2},
\en
where $A(k^2,\widetilde\mho)$ is defined as $A(k^2,\mho)$ with $(\mho,n)$ replaced by $(\widetilde\mho,\tilde n)$.
\end{theorem}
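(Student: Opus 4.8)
The plan is to follow the proof of Theorem~\ref{thm3.8_ID} almost line by line, with Theorem~\ref{thm0816} playing the role of \eqref{220614-2}; the only genuinely new ingredient is a zeroth order term coming from the volume (rather than boundary) formulation, which will have to be shown harmless. Throughout we may assume that (i) and (iii) hold, the remaining three combinations being treated identically after changing the signs of the relevant quadratic forms or passing to complex conjugates.

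First I would fix the auxiliary data. Choose a disk $\widetilde B$ with $\ov{\widetilde B}\subset\widetilde\mho$ and a real-valued $\check n\in L^\infty(B)$ with ${\rm supp}\,(\check n-1)=\ov{\widetilde B}$, taken constant and close to $1$ on $\widetilde B$ and arranged so that $k^2$ is not an eigenvalue of \eqref{tilde1m}--\eqref{tilde2m} (possible by a minor adjustment of $\widetilde B$ and $\check n$, as in the discussion following \eqref{tilde3}) and so that ${\rm Re}\,(\check n-n)<0$ in $\ov{\mho}$ while ${\rm Re}\,(\check n-\tilde n)>0$ in $\ov{\widetilde\mho}$; this is compatible with $\check n\approx1$ since, under (i), ${\rm Re}\,n>1$ on $\ov{\mho}$ and ${\rm Re}\,\tilde n<1$ on $\ov{\widetilde\mho}$. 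In particular $\check n-n$ is bounded away from $0$ on $\mho$. Applying Theorem~\ref{thm0816} to the pair $(\mho,n)$ and then to the pair $(\widetilde\mho,\tilde n)$, both with the common auxiliary $(\widetilde B,\check n)$ — note that $\widetilde A'_0(k^2)$ depends only on $(\widetilde B,\check n)$ — yields $A(k^2,\mho)-\widetilde A'_0(k^2)=k^2\widetilde G(k^2,\mho)\widetilde S_\mho^*[\widetilde G(k^2,\mho)]^*$ and $A(k^2,\widetilde\mho)-\widetilde A'_0(k^2)=k^2\widetilde G(k^2,\widetilde\mho)\widetilde S_{\widetilde\mho}^*[\widetilde G(k^2,\widetilde\mho)]^*$, together with the coercivity and sign properties of $\widetilde S_\mho,\widetilde S_{\widetilde\mho}$ from Theorem~\ref{thm0816}(i) and the identities ${\rm Ran}\,\widetilde G(k^2,\mho)={\rm Ran}\,G(k^2,\mho)$, ${\rm Ran}\,\widetilde G(k^2,\widetilde\mho)={\rm Ran}\,G(k^2,\widetilde\mho)$ from Theorem~\ref{thm0816}(iii).

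Next I would relate the two operators $\widetilde G$. For $f\in L^2(\widetilde\mho)$ the solution $w$ of $\Delta w+k^2\tilde n w=(\check n-\tilde n)f$ in $B$, $w=0$ on $\pa B$, also solves $\Delta w+k^2nw=(\check n-n)\rho$ in $B$ with $\rho=\frac{\check n-\tilde n}{\check n-n}f+k^2\frac{n-\tilde n}{\check n-n}\,w|_\mho\in L^2(\mho)$, so that $\widetilde G(k^2,\widetilde\mho)=\widetilde G(k^2,\mho)\widetilde R$, where $\widetilde R=\widetilde R_0+\widetilde R_c:L^2(\widetilde\mho)\ra L^2(\mho)$, $\widetilde R_0f=\frac{\check n-\tilde n}{\check n-n}f$ is a bounded multiplication operator (extended by zero) and $\widetilde R_c$ is compact (because $f\mapsto w$ is bounded into $H^1(B)$, which is compactly embedded in $L^2(B)$). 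Subtracting the two factorizations gives
\[
A(k^2,\mho)-A(k^2,\widetilde\mho)=k^2\,\widetilde G(k^2,\mho)\,\big\{\widetilde S_\mho^*-\widetilde R\,\widetilde S_{\widetilde\mho}^*\,\widetilde R^*\big\}\,[\widetilde G(k^2,\mho)]^*.
\]

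The hard part — and the only point where the argument departs from Theorem~\ref{thm3.8_ID}, where the analogue of $\widetilde R$ was already compact — is to verify that the middle operator $\widetilde M:=k^2\{\widetilde S_\mho^*-\widetilde R\,\widetilde S_{\widetilde\mho}^*\,\widetilde R^*\}$ still has the structure required by the range identity \cite[Theorem 3.3]{Kirsch05} (see also \cite[Theorem 2.15]{Kirsch08}): a coercive self-adjoint part up to a compact perturbation, plus a sign-definite imaginary part on $L^2(\mho)\ba\{0\}$. Writing $\widetilde S_\mho=\widetilde S_{\mho,0}+(\text{compact})$, $\widetilde S_{\widetilde\mho}=\widetilde S_{\widetilde\mho,0}+(\text{compact})$ and $\widetilde R=\widetilde R_0+\widetilde R_c$, only the zeroth order pieces survive in the real part modulo compacts, and a pointwise computation shows that $k^2\,{\rm Re}\big(\widetilde S_{\mho,0}^*-\widetilde R_0\widetilde S_{\widetilde\mho,0}^*\widetilde R_0^*\big)$ is multiplication by the function equal to $-k^2{\rm Re}(\check n-n)/|\check n-n|^2$ on $\mho\ba\widetilde\mho$ and to $k^2\,{\rm Re}(n-\tilde n)/|\check n-n|^2$ on $\widetilde\mho$, both bounded below by a positive constant (the first because ${\rm Re}(\check n-n)<0$, the second because ${\rm Re}(n-\tilde n)={\rm Re}\,n-{\rm Re}\,\tilde n>0$ under (i)); hence this part is coercive — the crucial point being that the $\check n-n$ and $\check n-\tilde n$ contributions reinforce rather than cancel, precisely because ${\rm Re}(\check n-n)$ and ${\rm Re}(\check n-\tilde n)$ were chosen with opposite signs. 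For the imaginary part, Theorem~\ref{thm0816}(i) under (iii) gives ${\rm Im}\langle\widetilde S_\mho\psi,\ov\psi\rangle<0$ for $\psi\neq0$ (since ${\rm Im}\,n>0$) and ${\rm Im}\langle\widetilde S_{\widetilde\mho}\tilde\psi,\ov{\tilde\psi}\rangle>0$ for $\tilde\psi\neq0$ (since ${\rm Im}\,\tilde n<0$); taking adjoints, ${\rm Im}\langle\widetilde M x,\ov x\rangle=k^2\big[-{\rm Im}\langle\widetilde S_\mho x,\ov x\rangle+{\rm Im}\langle\widetilde S_{\widetilde\mho}\widetilde R^*x,\ov{\widetilde R^*x}\rangle\big]>0$ for $x\neq0$, the first term being strictly positive and the second nonnegative. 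Finally, $\widetilde G(k^2,\mho)$ has dense range in $L^2(\pa B)$ (its range equals ${\rm Ran}\,G(k^2,\mho)$, dense by Remark~\ref{0915}) and $A(k^2,\mho)-A(k^2,\widetilde\mho)$ is compact on $L^2(\pa B)$ (each of $A(k^2,\mho)-\widetilde A'_0(k^2)$ and $A(k^2,\widetilde\mho)-\widetilde A'_0(k^2)$ is smoothing near $\pa B$, since the difference of the corresponding interior solutions solves a homogeneous Helmholtz equation with zero Dirichlet data near $\pa B$ — recall $\ov\mho,\ov{\widetilde\mho}\subset B$ — exactly as in Remark~\ref{0915}). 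Hence the cited range identity applies to the last displayed factorization and yields ${\rm Ran}\,\widetilde G(k^2,\mho)={\rm Ran}[A(k^2,\mho)-A(k^2,\widetilde\mho)]_\#^{1/2}$; combined with Theorem~\ref{thm0816}(iii) this is \eqref{220623-1}. The remaining three sign combinations are handled in the same way, the only change being that the coercive part and/or the imaginary part of $\widetilde M$ reverse sign, which does not affect the applicability of the range identity.
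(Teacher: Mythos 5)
Your proof follows essentially the same route as the paper's: the same choice of a real-valued $\check n$ with ${\rm Re}(\check n-n)<0$ on $\ov{\mho}$ and ${\rm Re}(\check n-\tilde n)>0$ on $\ov{\widetilde\mho}$, the two factorizations from Theorem \ref{thm0816}, the transfer operator relating $\widetilde G(k^2,\widetilde\mho)$ to $\widetilde G(k^2,\mho)$, and the verification of the coercivity and sign conditions needed for the range identity of \cite{Kirsch05}. Your argument is in fact slightly more careful than the paper's at the one terse point — you split the transfer operator into a multiplication part plus a compact part and compute the coercive real part pointwise on $\mho\ba\widetilde\mho$ and on $\widetilde\mho$, where the paper merely asserts the coercivity — but this is a refinement of the same proof, not a different one.
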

\begin{proof}
We only consider the case when assumptions (i) and (iii) are satisfied since the proofs for other cases are similar.
Choose a proper real-valued refractive index $\check n$ such that ${\rm supp}(\check n-1):=\ov{\widetilde B}\subset\widetilde\Om\subset\ov{\widetilde\Om}\subset\Om\subset\ov{\Om}\subset B$, and \eqref{tilde1m}--\eqref{tilde2m} is uniquely solvable, and ${\rm Re}\,(\check n-n)<0$ in $\ov{\Om}$ and ${\rm Re}\,(\check n-\tilde n)>0$ in $\ov{\widetilde\mho}$.
It follows from (\ref{220622-1}) that
\ben
A(k^2,\mho)-\widetilde A'_0(k^2)=k^2\widetilde G(k^2,\mho)\widetilde S_\mho^*[\widetilde G(k^2,\mho)]^*,\\
A(k^2,\widetilde\mho)-\widetilde A'_0(k^2)=k^2\widetilde G(k^2,\widetilde\mho)\widetilde S_{\widetilde\mho}^*[\widetilde G(k^2,\widetilde\mho)]^*.
\enn
Note that $G(k^2,\widetilde\mho)=G(k^2,\mho)R_{\widetilde\mho\ra\mho}$, where $R_{\widetilde\mho\ra\mho}$ is defined by $R_{\widetilde\mho\ra\mho}g=Eg+k^2(n-\tilde n)w$ in $\mho$ with $Eg$ denoting the extension of $g$ by zero from $\widetilde\mho$ into $\mho$ and $w$ solving
\be\label{220622-2}
\Delta w+k^2\tilde nw=g \quad \text{in}\;B,\qquad
w=0 \quad \text{on}\;\pa B.
\en
Note that
\ben
\widetilde G(k^2,\widetilde\mho)f\!\!\!\!&=&\!\!\!\!G(k^2,\widetilde\mho)\left(\frac{\check n-\tilde n}{1-\tilde n}f\right)=G(k^2,\mho)R_{\widetilde\mho\ra\mho}\left(\frac{\check n-\tilde n}{1-\tilde n}f\right)\\
&=&\!\!\!\!\widetilde G(k^2,\mho)\left(\frac{1-n}{\check n-n}R_{\widetilde\mho\ra\mho}\left(\frac{\check n-\tilde n}{1-\tilde n}f\right)\right)
\enn
for all $f\in L^2(\mho)$.
By defining $\widetilde R_{\widetilde\mho\ra\mho}g=\frac{1-n}{\check n-n}R_{\widetilde\mho\ra\mho}\left(\frac{\check n-\tilde n}{1-\tilde n}g\right)$ for all $g\in L^2(\widetilde\mho)$, we have
\ben
A(k^2,\mho)-A(k^2,\widetilde\mho)=k^2\widetilde G(k^2,\mho)(\widetilde S_\mho^*-\widetilde R_{\widetilde\mho\ra\mho}\widetilde S_{\widetilde\mho}^*\widetilde R_{\widetilde\mho\ra\mho}^*)[\widetilde G(k^2,\mho)]^*.
\enn
By Theorem \ref{thm0816} (i), we know $(\widetilde S_\mho-\widetilde R_{\widetilde\mho\ra\mho}\widetilde S_{\widetilde\mho}\widetilde R_{\widetilde\mho\ra\mho}^*)-(\widetilde S_{\mho,0}-\widetilde R_{\widetilde\mho\ra\mho}\widetilde S_{\widetilde\mho,0}\widetilde R_{\widetilde\mho\ra\mho}^*):L^2(\mho)\to L^2(\mho)$ is compact.
Moreover, it follows from the assumptions (i) and (iii) that there exists a constant $c>0$ such that
\ben
{\rm Re}\left\langle(\widetilde S_{\mho,0}-\widetilde R_{\widetilde\mho\ra\mho}\widetilde S_{\widetilde\mho,0}\widetilde R_{\widetilde\mho\ra\mho}^*)\psi,\ov{\psi}\right\rangle\geq c\|\psi\|^2_{L^2(\mho)}\quad\text{for all }\psi\in L^2(\mho),\\
-{\rm Im}\left\langle(\widetilde S_\mho-\widetilde R_{\widetilde\mho\ra\mho}\widetilde S_{\widetilde\mho}\widetilde R_{\widetilde\mho\ra\mho}^*)\psi,\ov{\psi}\right\rangle>0\quad\text{for all }\psi\in L^2(\mho)\ba\{0\}.
\enn
Combining Theorem \ref{t3.3} (b), Theorem \ref{thm0816} (i),(iii) and Remark \ref{0915} (iv), \eqref{220623-1} follows from \cite[Theorem 3.3]{Kirsch05} (see also \cite[Theorem 2.15]{Kirsch08}).
\end{proof}

For a numerical implementation of factorization method, we need the following theorem analogous to Theorem \ref{zB}.
We omit the proof since it is similar to that of Theorem \ref{zB}.

\begin{theorem}\label{zB_ID}
Let $z\in B$.
(i) $\pa_{\nu}\widetilde\Psi_k(\cdot,z)|_{\pa B}\in{\rm Ran}\,G(k^2,\Om)$ if and only if $z\in\Om$.

(ii) $\pa_{\nu}\widetilde\Psi_k(\cdot,z)|_{\pa B}\in{\rm Ran}\,G(k^2,\mho)$ if and only if $z\in\mho$.
\end{theorem}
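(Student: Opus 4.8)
The plan is to mimic the proof of Theorem \ref{zB}, working with the modified Green functions $\widetilde\Psi_k$ (for part (i)) and $\widetilde\Psi'_k$ (for part (ii)) in place of $\Psi_k$, and exploiting the factorizations \eqref{220614-2} and \eqref{220622-1} together with the range characterizations in Theorems \ref{thm3.8_ID} and \ref{thm0816}(iii). I will treat part (i) in detail; part (ii) is entirely analogous with the obvious replacements ($\Om\leftrightarrow\mho$, $\widetilde\Psi_k\leftrightarrow\widetilde\Psi'_k$, $\widetilde T_{imp}\leftrightarrow k^2\widetilde S_\mho$, and the layer potentials replaced by volume potentials).

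\textbf{Sufficiency ($z\in\Om\Rightarrow\pa_\nu\widetilde\Psi_k(\cdot,z)|_{\pa B}\in\mathrm{Ran}\,G(k^2,\Om)$).} For fixed $z\in\Om$, I would exhibit an explicit density $g\in H^{-1/2}(\pa\Om)$ with $G(k^2,\Om)g=\pa_\nu\widetilde\Psi_k(\cdot,z)|_{\pa B}$. The natural candidate is $g:=(\pa_\nu+\eta)\widetilde\Psi_k(\cdot,z)|_{\pa\Om}$. Set $v:=\widetilde\Psi_k(\cdot,z)$ restricted to $B\ba\ov\Om$. Since $z\in\Om$, the singularity of the fundamental solution $\Phi_k(\cdot,z)$ lies inside $\Om$, so $v$ satisfies $\Delta v+k^2v=0$ in $B\ba\ov\Om$; moreover $v=0$ on $\pa B$ by construction of $\widetilde\Psi_k$; and on $\pa\Om$ we have $\pa_\nu v+\eta v=g$. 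Hence $v$ solves \eqref{7}--\eqref{9} with this $g$, and therefore $G(k^2,\Om)g=\pa_\nu v|_{\pa B}=\pa_\nu\widetilde\Psi_k(\cdot,z)|_{\pa B}$, which is the claim. One should just check that $g\in H^{-1/2}(\pa\Om)$, which follows from interior elliptic regularity of $\widetilde\Psi_k(\cdot,z)$ away from $z$ together with the trace theorem, and that $k^2$ not being an eigenvalue of \eqref{1'}--\eqref{3'} (guaranteed by $\mathrm{Im}\,\eta$ having a fixed sign, cf.\ Remark \ref{rem-}(vi)) makes $G(k^2,\Om)$ well-defined.

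\textbf{Necessity ($z\notin\Om\Rightarrow\pa_\nu\widetilde\Psi_k(\cdot,z)|_{\pa B}\notin\mathrm{Ran}\,G(k^2,\Om)$).} This is the harder direction and where the singularity analysis enters. Argue by contradiction: suppose $z\in B\ba\ov\Om$ and $\pa_\nu\widetilde\Psi_k(\cdot,z)|_{\pa B}=G(k^2,\Om)g$ for some $g\in H^{-1/2}(\pa\Om)$. Let $v$ be the solution of \eqref{7}--\eqref{9} with this $g$; then $v$ and $\widetilde\Psi_k(\cdot,z)$ are both solutions of the Helmholtz equation in the region exterior to $\ov\Om\cup\{z\}$ inside $B$, both vanish on $\pa B$, and have the same Neumann trace $\pa_\nu(\cdot)|_{\pa B}$. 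Holmgren's uniqueness theorem (or unique continuation) forces $v=\widetilde\Psi_k(\cdot,z)$ in the connected component of $B\ba(\ov\Om\cup\{z\})$ touching $\pa B$, and then by analyticity (Remark \ref{rem-}(v)) throughout $B\ba(\ov\Om\cup\{z\})$. But $v\in H^1_\Delta(B\ba\ov\Om)$ is regular in a neighbourhood of $z$, whereas $\widetilde\Psi_k(\cdot,z)$ has the logarithmic singularity of $\Phi_k(\cdot,z)=\frac{i}{4}H_0^{(1)}(k|\cdot-z|)$ at $z$ (recall $\psi_k$-type correction is smooth there). This contradiction is immediate when $z\in B\ba\ov\Om$; the only delicate point is when $z\in\pa\Om$, where one instead uses that $v\in H^1$ near $\pa\Om$ while $\widetilde\Psi_k(\cdot,z)$ fails to be $H^1$ at the boundary point $z$, so the equality $v=\widetilde\Psi_k(\cdot,z)$ near $\pa\Om$ is again impossible. (This is precisely the point where one must be careful about which side of $\pa\Om$ one is on and invoke the one-sided trace / regularity estimates from Remark \ref{rem-}(iii).)

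\textbf{Main obstacle and closing remarks.} The main obstacle is the necessity part: carefully setting up the unique-continuation argument so that the comparison between $v$ and the singular Green function is valid on the correct component and degenerate cases ($z\in\pa\Om$, or $\Om$ not connected) are handled; this is exactly the standard subtlety in proofs of this type in \cite{CK19,Kirsch08,XH24}, and since the statement explicitly says the proof is omitted because ``it is similar to that of Theorem \ref{zB},'' it suffices to note that replacing $\Psi_k$ by $\widetilde\Psi_k$ (resp.\ $\widetilde\Psi'_k$) changes nothing essential: both modified Green functions are symmetric, real-valued, vanish on $\pa B$, are analytic away from the diagonal, and carry the same $\Phi_k$-singularity, so every step of the original argument transfers verbatim. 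For part (ii) one additionally uses $\mathrm{Ran}\,\widetilde G(k^2,\mho)=\mathrm{Ran}\,G(k^2,\mho)$ from Theorem \ref{thm0816}(iii) to pass between the two operators.
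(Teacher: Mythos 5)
Your proposal is correct and follows exactly the route the paper has in mind: the paper omits this proof, pointing to Theorem \ref{zB} and the standard arguments in \cite{CK19,Kirsch08,XH24}, and your treatment of part (i) — explicit preimage $g=(\pa_\nu+\eta)\widetilde\Psi_k(\cdot,z)|_{\pa\Om}$ for $z\in\Om$, and for $z\notin\Om$ a Holmgren/unique-continuation comparison on $B\ba(\ov\Om\cup\{z\})$ contradicted by the $\Phi_k$-singularity (with the $H^1$-failure argument when $z\in\pa\Om$) — is precisely that argument, transferred to $\widetilde\Psi_k$ using that it vanishes on $\pa B$ and carries the same singularity. One small caveat: for part (ii) the sufficiency step does not transfer verbatim from the obstacle case, since there is no boundary condition on $\pa\mho$ to absorb the data and $\widetilde\Psi'_k(\cdot,z)\notin H^1(B)$ cannot itself be the solution $w$ of \eqref{G-1}--\eqref{G-2}; instead one modifies $\widetilde\Psi'_k(\cdot,z)$ by a cutoff inside a small ball around $z\subset\mho$ and sets $\rho:=(\Delta w+k^2nw)/(1-n)\in L^2(\mho)$, which is legitimate because $|1-n|$ is bounded below on $\ov\mho$; the necessity direction for (ii) goes through exactly as you describe, together with ${\rm Ran}\,\widetilde G(k^2,\mho)={\rm Ran}\,G(k^2,\mho)$ from Theorem \ref{thm0816} (iii).
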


\section{Determination of the two coefficients}\label{s3}
\setcounter{equation}{0}

We retain the notations introduced in previous sections.
Below we show that $(\sigma,q)$ can be recovered from a single pair of Cauchy data under some a priori assumptions.
{\bl The following uniqueness results for determining coefficients remain valid even if $D$ is a Lipschitz domain contained in $B$.}

\begin{theorem}\label{thm-1}
Assume that neither $q/\sigma$ nor $\kappa^2$ is an eigenvalue of \eqref{5}--\eqref{6}.
Let $u$ solve \eqref{-1}--\eqref{-3} and $u_0(\cdot,\kappa^2)$ solve \eqref{5}--\eqref{6} with $k=\kappa$ for the same boundary value $f$ on $\pa B$.
Assume that $D\subset\Omega\subset\ov{\Om}\subset B$.
For $\tau\in\C$ and $\kappa\in\C$ define $p(\tau,\kappa^2):=\sigma\pa_\nu u-\tau\pa_\nu u_0(\cdot,\kappa^2)$ on $\pa B$.
Then the following statements are true:

(i) If $(\tau,\kappa^2)=(\sigma,q/\sigma)$, then $p(\tau,\kappa^2)\in{\rm Ran}\,G(\kappa^2,\Om)$.

(ii) $p(\tau,\kappa^2)\in{\rm Ran}\,G(\kappa^2,\Om)$ if and only if $[\sigma A_0(q/\sigma)-\tau A_0(\kappa^2)]f\in{\rm Ran}\,G(\kappa^2,\Om)$.
\end{theorem}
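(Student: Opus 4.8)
The plan is to exploit the factorization developed in the previous section, specifically the fact that the difference $A(\kappa^2,\Om)-A_0(\kappa^2)$ factors through $G(\kappa^2,\Om)$, and that $G$ has dense range and is compact into $L^2(\pa B)$. For part (i), I would observe that when $(\tau,\kappa^2)=(\sigma,q/\sigma)$, the quantity $p(\tau,\kappa^2)=\sigma\pa_\nu u-\sigma\pa_\nu u_0(\cdot,q/\sigma)$ equals $\sigma[A(q/\sigma,D)-A_0(q/\sigma)]f$ evaluated on $\pa B$. The key point is that the scattered field $u-u_0$ solves a boundary value problem of the type \eqref{7}--\eqref{9} in $B\ba\ov\Om$ — more precisely, since $D\subset\Om$, one first needs to view the Dirichlet obstacle $D$ as sitting inside the larger test domain $\Om$, and then the difference $v_1:=u-u_0$, restricted to $B\ba\ov\Om$, satisfies the Helmholtz equation, vanishes on $\pa B$, and has some impedance-type data $g:=(\pa_\nu+\eta)(u-u_0)|_{\pa\Om}\in H^{-1/2}(\pa\Om)$ (recall $u-u_0$ is analytic near $\pa\Om$ since neither $u$ nor $u_0$ has singularities there, as $\ov D\subset\Om$). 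Hence $p=\sigma G(\kappa^2,\Om)g\in{\rm Ran}\,G(\kappa^2,\Om)$. This is essentially the argument already used in Remark~\ref{0915}(i), which I would cite.

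For part (ii), the idea is a simple algebraic manipulation combined with the observation that $[A(\kappa^2,\Om)-A_0(\kappa^2)]f\in{\rm Ran}\,G(\kappa^2,\Om)$ always holds by the factorization \eqref{f-1} (equivalently by Remark~\ref{0915}(i)). Write
\[
p(\tau,\kappa^2)=\sigma\pa_\nu u-\tau\pa_\nu u_0(\cdot,\kappa^2)
= \big[\sigma A(q/\sigma,D)f-\sigma A_0(q/\sigma)f\big] + \big[\sigma A_0(q/\sigma)f-\tau A_0(\kappa^2)f\big],
\]
where I have added and subtracted $\sigma\pa_\nu u_0(\cdot,q/\sigma)=\sigma A_0(q/\sigma)f$. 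The first bracket is $\sigma[A(q/\sigma,D)-A_0(q/\sigma)]f$, which lies in ${\rm Ran}\,G(q/\sigma,\Om)$ by part (i). So $p(\tau,\kappa^2)\in{\rm Ran}\,G(\kappa^2,\Om)$ if and only if the second bracket $[\sigma A_0(q/\sigma)-\tau A_0(\kappa^2)]f\in{\rm Ran}\,G(\kappa^2,\Om)$, provided one knows that ${\rm Ran}\,G(q/\sigma,\Om)={\rm Ran}\,G(\kappa^2,\Om)$ — but wait, these are generally different ranges since the wave numbers differ. Here is the subtlety: in part (i) the membership was in ${\rm Ran}\,G(\kappa^2,\Om)$ only because there $\kappa^2=q/\sigma$, so the two coincide trivially. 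For general $\kappa$, the first bracket is $\sigma[A(q/\sigma,D)-A_0(q/\sigma)]f\in{\rm Ran}\,G(q/\sigma,\Om)$, which need not sit in ${\rm Ran}\,G(\kappa^2,\Om)$. So the correct decomposition for (ii) must instead keep $\kappa^2$ fixed throughout: write $p(\tau,\kappa^2)=[\sigma A(q/\sigma,D)f-\sigma A(\kappa^2,D)f]+\sigma[A(\kappa^2,D)-A_0(\kappa^2)]f+[\sigma A_0(\kappa^2)-\tau A_0(\kappa^2)]f+\dots$; more cleanly, add and subtract $\sigma\pa_\nu u(\cdot,\kappa^2,D)$, the solution of \eqref{1}--\eqref{3} at wave number $\kappa$. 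Then $\sigma\pa_\nu u(\cdot,q/\sigma,D)-\sigma\pa_\nu u(\cdot,\kappa^2,D)$ is the boundary trace of a field that again solves the Helmholtz equation at wave number $\kappa$ in $B\ba\ov\Om$ (after subtracting, the $q/\sigma$-field satisfies $(\Delta+q/\sigma)u=0$, hence $(\Delta+\kappa^2)u=(\kappa^2-q/\sigma)u$, which is not homogeneous) — this does not immediately give a $G(\kappa^2,\Om)$-representation either.

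Given this, the cleanest route for (ii) is: $p(\tau,\kappa^2) - [\sigma A_0(q/\sigma)-\tau A_0(\kappa^2)]f = \sigma[A(q/\sigma,D)-A_0(q/\sigma)]f = \sigma[A(q/\sigma,\Om\text{-compatible})-A_0(q/\sigma)]f$, and then invoke that this lies in ${\rm Ran}\,G(q/\sigma,\Om)$, combined with Theorem~\ref{zB} to identify ${\rm Ran}\,G$ independently of whether the wave number is an eigenvalue. Actually the honest statement is that the set $\{p:p\in{\rm Ran}\,G(\kappa^2,\Om)\}$ differs from $[\sigma A_0(q/\sigma)-\tau A_0(\kappa^2)]f+{\rm Ran}\,G(\kappa^2,\Om)$ by the fixed element $\sigma[A(q/\sigma,D)-A_0(q/\sigma)]f$, and this element belongs to ${\rm Ran}\,G(\kappa^2,\Om)$ precisely when... — so (ii) reduces to showing $\sigma[A(q/\sigma,D)-A_0(q/\sigma)]f\in{\rm Ran}\,G(\kappa^2,\Om)$ unconditionally. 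The main obstacle, and the crux of the whole proof, is therefore establishing this last membership: that the genuine scattered data (from the true obstacle at the true wave number) lies in the range of the $G$-operator built with the test wave number $\kappa^2$. I expect this to follow from the structure of $G(\kappa^2,\Om)$ as mapping $\pa_\nu$-traces of $\kappa^2$-radiating fields that vanish on $\pa B$, together with the fact that $u-u_0$, being analytic in a neighborhood of $\pa\Om$ and zero on $\pa B$, while solving $(\Delta+q/\sigma)(u-u_0)=0$, can be re-expressed as a $\kappa^2$-field with an appropriate volume source supported away from $\pa\Om$ — or, more in the spirit of this paper, by a direct argument using Green's representation formula with $\widetilde\Psi_{\kappa}$ and the definition of $G(\kappa^2,\Om)$ on $\pa\Om$. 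I would carry this out by writing $u-u_0$ on $B\ba\ov\Om$ via the Green representation with the $\kappa$-Green function, matching the single- and double-layer potentials on $\pa\Om$ to the form \eqref{7}--\eqref{9}, and reading off that the $\pa B$-trace is $G(\kappa^2,\Om)$ applied to the resulting impedance datum.
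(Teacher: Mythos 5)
Your part (i) is exactly the paper's argument: since $D\subset\Om$ and, in this case, $\kappa^2=q/\sigma$, the function $\sigma\,(u-u_0(\cdot,q/\sigma))$ solves the Helmholtz equation in $B\ba\ov{\Om}$, vanishes on $\pa B$, and therefore coincides with the solution of \eqref{7}--\eqref{9} whose impedance datum is $g=(\pa_\nu+\eta)[\sigma(u-u_0(\cdot,q/\sigma))]|_{\pa\Om}$, giving $p(\sigma,q/\sigma)=G(q/\sigma,\Om)g$. For part (ii) the paper's proof is literally the two\textendash line decomposition you wrote first, namely $p(\tau,\kappa^2)-p(\sigma,q/\sigma)=[\sigma A_0(q/\sigma)-\tau A_0(\kappa^2)]f$ combined with the membership $p(\sigma,q/\sigma)\in{\rm Ran}\,G(q/\sigma,\Om)$ from (i); the wave-number mismatch you flag (the subspace in (ii) is ${\rm Ran}\,G(\kappa^2,\Om)$, while (i) only yields membership in ${\rm Ran}\,G(q/\sigma,\Om)$) is simply not addressed there, so your objection is a legitimate criticism of the statement and its proof rather than something the paper resolves. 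It is worth noting that in the way Theorem~\ref{thm-1} is actually used (Theorem~\ref{53-thm1}), only the weaker fact that $p(\sigma,q/\sigma)$ is smooth is needed, which follows from ${\rm Ran}\,G(q/\sigma,\Om)\subset C^\infty(\pa B)$ by Theorem~\ref{t3.3}(a), so the downstream argument survives the mismatch.

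The genuine gap is that your proposal never completes (ii). You correctly isolate the crux \textemdash{} one must show $p(\sigma,q/\sigma)=\sigma[A(q/\sigma,D)-A_0(q/\sigma)]f\in{\rm Ran}\,G(\kappa^2,\Om)$ also when $\kappa^2\neq q/\sigma$ \textemdash{} but the route you sketch does not deliver it: if you represent $w:=u-u_0(\cdot,q/\sigma)$ in $B\ba\ov{\Om}$ by Green's formula with the $\kappa$-kernel, then, because $(\Delta+\kappa^2)w=(\kappa^2-q/\sigma)w\neq0$, the representation contains, besides layer potentials on $\pa\Om$, a volume potential $(\kappa^2-q/\sigma)\int_{B\ba\ov{\Om}}\Psi_\kappa(\cdot,y)w(y)\,dy$ whose support reaches $\pa B$, so the resulting Neumann trace is not of the form $G(\kappa^2,\Om)g$ with $g\in H^{-1/2}(\pa\Om)$, and no impedance datum can be ``read off.'' Establishing the needed membership (equivalently, that the Cauchy data $(0,p(\sigma,q/\sigma))$ on $\pa B$ continues as a $\kappa^2$-Helmholtz solution throughout $B\ba\ov{\Om}$, or that ${\rm Ran}\,G(\kappa^2,\Om)$ does not depend on the wave number) is a nontrivial claim that neither your sketch nor the paper supplies. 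So, as written, your proof of (ii) is incomplete: reproducing the paper's decomposition together with (i) reproduces the paper's (short) proof, implicit step included, but your attempt to justify that implicit step stops at a heuristic whose concrete implementation fails.
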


\begin{proof}
(i) If $(\tau,\kappa^2)=(\sigma,q/\sigma)$, then $p(\tau,\kappa^2)=p(\sigma,q/\sigma)=\sigma\pa_\nu(u-u_0(\cdot,q/\sigma))|_{\pa B}$.
Noting that $u-u_0(\cdot,q/\sigma)$ satisfies the Helmholtz equation in $B\ba\ov{D}$ and $u-u_0(\cdot,q/\sigma)=f-f=0$ on $\pa B$, we have $p(\tau,\kappa^2)=G(\kappa^2,\Om)g$ with $g=(\pa_\nu+\eta)[\sigma(u-u_0(\cdot,q/\sigma))]$ on $\pa\Om$.

(ii) Since $p(\sigma,q/\sigma)\in{\rm Ran}\,G(q/\sigma,\Om)$, the assertion follows easily from
\ben
p(\tau,\kappa^2)-p(\sigma,q/\sigma)&=&(\sigma\pa_\nu u-\tau\pa_\nu u_0(\cdot,\kappa^2))-(\sigma\pa_\nu u-\sigma\pa_\nu u_0(\cdot,q/\sigma))\\
&=&\sigma\pa_\nu u_0(\cdot,q/\sigma)-\tau\pa_\nu u_0(\cdot,\kappa^2)\\
&=&[\sigma A_0(q/\sigma)-\tau A_0(\kappa^2)]f.
\enn
\end{proof}

In view of Theorem \ref{thm-1}, if the boundary value $f$ on $\pa B$ satisfies the condition $[\sigma A_0(q/\sigma)-\tau A_0(\kappa^2)]f\notin{\rm Ran}\,G(\kappa^2,\Om)$ whenever $(\tau,\kappa^2)\neq(\sigma,q/\sigma)$, then $(\sigma,q)$ can be recovered by checking whether $p(\tau,\kappa^2)$ belongs to ${\rm Ran}\,G(\kappa^2,\Om)$ if neither $q/\sigma$ nor $\kappa^2$ is an eigenvalue of \eqref{5}--\eqref{6}.
For convenience of numerical implementation, we provide an explicit example of the boundary value $f$ on $\pa B$ such that $[\sigma A_0(q/\sigma)-\tau A_0(\kappa^2)]f\notin{\rm Ran}\,G(\kappa^2,\Om)$ whenever $(\tau,\kappa^2)\neq(\sigma,q/\sigma)$.

\begin{theorem}\label{53-thm1}
Assume that neither $q/\sigma$ nor $\kappa^2$ is an eigenvalue of \eqref{5}--\eqref{6} and $f\in H^s(\pa B)\ba H^{s+\varepsilon}(\pa B)$ for any $s\geq-\frac12$ and $\varepsilon>0$.
Let $u$ solve \eqref{-1}--\eqref{-3} and $u_0(\cdot,\kappa^2)$ solve \eqref{5}--\eqref{6} with $k=\kappa$.
Assume $D\subset\Omega\subset\ov{\Om}\subset B$.
If $(\tau,\kappa^2)\neq(\sigma,q/\sigma)$, then $[\sigma A_0(q/\sigma)-\tau A_0(\kappa^2)]f\notin{\rm Ran}\,G(\kappa^2,\Om)$.
\end{theorem}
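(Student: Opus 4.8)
The plan is to argue by contradiction, building a solution of the Helmholtz equation that is too regular on $\pa B$ to be consistent with the assumed roughness of $f$. Suppose to the contrary that $(\tau,\kappa^2)\neq(\sigma,q/\sigma)$ yet $g:=[\sigma A_0(q/\sigma)-\tau A_0(\kappa^2)]f\in{\rm Ran}\,G(\kappa^2,\Om)$. By the definition of $G(\kappa^2,\Om)$ (cf. \eqref{7}--\eqref{9}), this means there is $\phi\in H^{-1/2}(\pa\Om)$ and a function $v$ solving the Helmholtz equation with wavenumber $\kappa$ in $B\ba\ov{\Om}$, vanishing on $\pa B$, with $\pa_\nu v=g$ on $\pa B$. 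On the other hand, writing $u_0(\cdot,q/\sigma)$ and $u_0(\cdot,\kappa^2)$ for the solutions of \eqref{5}--\eqref{6} with the same Dirichlet data $f$ and the two wavenumbers, the function $\sigma u_0(\cdot,q/\sigma)-\tau u_0(\cdot,\kappa^2)$ has Neumann trace $g$ on $\pa B$ and Dirichlet trace $(\sigma-\tau)f$ on $\pa B$.

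The key step is to compare these two objects using the fact that both $v$ and $\sigma u_0(\cdot,q/\sigma)-\tau u_0(\cdot,\kappa^2)$ solve Helmholtz equations (possibly with different wavenumbers, $\kappa$ versus the two values $q/\sigma$, $\kappa^2$). First I would reduce to the case where only one wavenumber is in play. Consider two sub-cases. If $\kappa^2=q/\sigma$ but $\tau\neq\sigma$: then $\sigma u_0(\cdot,q/\sigma)-\tau u_0(\cdot,\kappa^2)=(\sigma-\tau)u_0(\cdot,\kappa^2)$, and $g=(\sigma-\tau)\pa_\nu u_0(\cdot,\kappa^2)$. The difference $w:=(\sigma-\tau)u_0(\cdot,\kappa^2)-v$ solves $\Delta w+\kappa^2 w=0$ in $B\ba\ov{\Om}$ with $\pa_\nu w=0$ on $\pa B$ and Dirichlet trace $(\sigma-\tau)f$ on $\pa B$; by Holmgren/unique continuation $w$ extends $(\sigma-\tau)u_0(\cdot,\kappa^2)$ and in particular $(\sigma-\tau)u_0(\cdot,\kappa^2)$ is real-analytic across $\pa B$ (interior analyticity of $v$ up to $\pa B$ from the exterior side would be used, together with matching Cauchy data). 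Since $\sigma\neq\tau$, $u_0(\cdot,\kappa^2)$ is then analytic in a neighbourhood of $\pa B$, forcing $f=u_0|_{\pa B}\in C^\omega(\pa B)\subset H^{s+\varepsilon}(\pa B)$ for every $\varepsilon>0$, contradicting the hypothesis $f\notin H^{s+\varepsilon}(\pa B)$. If instead $\kappa^2\neq q/\sigma$: here I would use that $u_0(\cdot,q/\sigma)$ and $u_0(\cdot,\kappa^2)$ share the Dirichlet trace $f$ on $\pa B$ but their Neumann traces differ by something outside ${\rm Ran}\,G(\kappa^2,\Om)$; the cleanest route is again to transplant, via $v$, the combination $\sigma u_0(\cdot,q/\sigma)-\tau u_0(\cdot,\kappa^2)$ to an analytic function across $\pa B$, then peel off the $\kappa^2$-Helmholtz piece and conclude $\sigma u_0(\cdot,q/\sigma)$ is analytic near $\pa B$, hence $f$ is analytic, again a contradiction.

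Concretely, the uniform mechanism I would use is: the solution $v$ of \eqref{7}--\eqref{9}, extended by $0$ outside $B$ on the $\pa B$ side and matched with $\sigma u_0(\cdot,q/\sigma)-\tau u_0(\cdot,\kappa^2)$ along $\pa B$ where the Cauchy data $(\sigma-\tau)f$ and $g$ must be transferred, produces — after subtracting off the lower-order $\kappa^2$-Helmholtz part which is itself analytic up to $\pa B$ by Remark~\ref{rem-}(v) applied in $B\ba\ov{\Om}$ — a function solving a Helmholtz equation in a full neighbourhood of each point of $\pa B$, whence $f\in C^\omega(\pa B)$. Since $C^\omega(\pa B)\subset H^{t}(\pa B)$ for all $t$, this contradicts $f\in H^s(\pa B)\ba H^{s+\varepsilon}(\pa B)$.

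**Main obstacle.** The delicate point is the gluing across $\pa B$: I must show that when the two Cauchy data on $\pa B$ of an interior Helmholtz solution and of a $B\ba\ov{\Om}$-Helmholtz solution agree, the resulting function is genuinely a (weak, hence analytic) solution of a single Helmholtz equation in an open neighbourhood of $\pa B$, not merely on each side. This requires care because the two wavenumbers need not coincide, so I cannot literally glue into one Helmholtz equation; the fix is to observe that the mismatch term $\kappa^2 u_0(\cdot,\kappa^2)-\tfrac{q}{\sigma}u_0(\cdot,q/\sigma)$ (times constants) is a lower-order inhomogeneity that is itself analytic up to $\pa B$ from inside, so the glued function solves a Helmholtz-type equation with analytic right-hand side across $\pa B$, and elliptic analytic-hypoellipticity still gives $f\in C^\omega(\pa B)$. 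Handling the regularity bookkeeping ($f\in H^s$ with $s$ possibly negative, traces in $H^{s-1/2}$, etc.) and invoking the correct trace and unique-continuation results from \cite{XH24,Cakoni14} for $H^1_\Delta$ and $L^2_\Delta$ spaces is the remaining technical burden, but is routine once the analyticity mechanism is in place.
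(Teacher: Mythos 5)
Your overall strategy (contradiction from ``the image would be too regular on $\pa B$'') matches the spirit of the paper, but the paper proves the key regularity statement by an explicit Fourier--Bessel computation on the circle: expanding $f=\sum_n c_ne^{in\theta}$, using $u_0(\cdot,\kappa^2)=\sum_n c_n\frac{J_n(\kappa|x|)}{J_n(\kappa R)}e^{in\theta}$ and the asymptotics $\frac{J_n'(t)}{J_n(t)}=\frac{|n|}{t}-\frac{t}{2(|n|+1)}\{1+O(1/|n|)\}$, it shows that $[\sigma A_0(q/\sigma)-\tau A_0(\kappa^2)]f$ lies in $H^{s-1}(\pa B)\ba H^{s-1+\varepsilon}(\pa B)$ when $\tau\neq\sigma$ and in $H^{s+1}(\pa B)\ba H^{s+1+\varepsilon}(\pa B)$ when $\tau=\sigma$, $\kappa^2\neq q/\sigma$; hence it is not in $C^\infty(\pa B)\supset{\rm Ran}\,G(\kappa^2,\Om)$. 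Your PDE-gluing route does not reach this conclusion, and there are two concrete gaps.

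First, your ``uniform mechanism'' rests on the claim that the inhomogeneity produced after subtracting $v$, namely a constant multiple of $u_0(\cdot,q/\sigma)$, is ``analytic up to $\pa B$ by Remark \ref{rem-} (v) applied in $B\ba\ov{\Om}$.'' Remark \ref{rem-} (v) only gives interior analyticity; up to $\pa B$ the regularity of $u_0(\cdot,q/\sigma)$ is exactly limited by the regularity of its trace $f$, which by hypothesis is \emph{not} smooth. So the glued function does not satisfy an equation with analytic (or even smooth) right-hand side across $\pa B$, and analytic hypoellipticity cannot be invoked. (In the sub-case $\tau\neq\sigma$ the idea can still be salvaged without analyticity: the function $U:=\sigma u_0(\cdot,q/\sigma)-\tau u_0(\cdot,\kappa^2)-v$ has zero Neumann trace on $\pa B$ and satisfies $\Delta U+\kappa^2U=\sigma(\kappa^2-q/\sigma)u_0(\cdot,q/\sigma)\in H^{s+1/2}$ near $\pa B$, so elliptic boundary regularity gives $U|_{\pa B}\in H^{s+2}(\pa B)$ and $f=(\sigma-\tau)^{-1}U|_{\pa B}$, a contradiction.)

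Second, and more seriously, the case $\tau=\sigma$, $\kappa^2\neq q/\sigma$ is not handled by your construction at all: there the Dirichlet trace of $U$ on $\pa B$ is $(\sigma-\tau)f=0$, so the gluing carries no information about $f$, and the step ``peel off the $\kappa^2$-Helmholtz piece and conclude $\sigma u_0(\cdot,q/\sigma)$ is analytic near $\pa B$'' is asserted without any mechanism. What is actually needed in this case is that the operator $A_0(q/\sigma)-A_0(\kappa^2)$, although smoothing of order one, is \emph{elliptic of order $-1$}: its $n$-th Fourier multiplier behaves like $(q/\sigma-\kappa^2)R/(2|n|)\neq0$, so smoothness of the image forces smoothness of $f$. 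This quantitative non-cancellation — precisely the content of the paper's Bessel-quotient asymptotics — cannot be produced by a purely qualitative unique-continuation or reflection argument, so the proposal has a genuine gap in exactly the case that distinguishes $\kappa^2$ from $q/\sigma$.
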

\begin{proof}
Without loss of generality, we may assume that $B$ is centered at the origin.
We first note that $f\in H^{s}(\pa B)$ can be expanded into the series
\be\label{53-1}
f(x)=\sum_{n\in\Z}c_ne^{in\theta},\quad x\in\pa B,
\en
where $\theta\in[0,2\pi)$ is given by $x/|x|=(\cos\theta,\sin\theta)$ and
\be\label{220604-3}
\left\|f\right\|_{H^s(\pa B)}^2=\sum_{n\in\Z}(n^2+1)^s|c_n|^2.
\en
Proceeding as in the proof of \cite[Theorem 2.33]{KH}, we have
\be\label{220604-2}
u_0(\cdot,\kappa^2)=\sum_{n\in\Z}c_n\frac{J_n(\kappa|x|)}{J_n(\kappa R)}e^{in\theta},\quad x\in B,
\en
where $R>0$ is the radius of $B$ and $J_n$ denotes the Bessel function of order $n$.
Note that $J_n(\kappa R)\neq0$ and $J_n(kR)\neq0$ for all $n\in\Z$ due to the assumption on $q/\sigma=:k^2$ and $\kappa^2$.
We immediately deduce from (\ref{220604-2}) that
\ben
\sigma\pa_\nu u_0(\cdot,k^2)-\tau\pa_\nu u_0(\cdot,\kappa^2)=\sigma\sum_{n\in\Z}c_n\frac{kJ'_n(kR)}{J_n(kR)}e^{in\theta}-\tau\sum_{n\in\Z}c_n\frac{\kappa J'_n(\kappa R)}{J_n(\kappa R)}e^{in\theta}.
\enn
It follows from the relations in \cite[Section 3.2]{Cakoni14}) and \cite[Section 3.5]{CK19}) that
that
\ben
\frac{J'_n(t)}{J_n(t)}=\frac{J'_{|n|}(t)}{J_{|n|}(t)}=\frac{|n|}t-\frac{J_{|n|+1}(t)}{J_{|n|}(t)}=\frac{|n|}t-\frac{t}{2(|n|+1)}\left\{1+O\left(\frac1{|n|}\right)\right\},\quad |n|\ra\infty.
\enn
Therefore,
\ben
&&[\sigma A_0(q/\sigma)-\tau A_0(\kappa^2)]f=\sum_{n\in\Z}g_ne^{in\theta},\\ 
&&g_n=c_n\left\{(\sigma-\tau)\frac{|n|}{R}+\frac{(\sigma k^2-\tau\kappa^2)R}{2(|n|+1)}+O\left(\frac1{|n|^2}\right)\right\},\quad|n|\ra\infty.
\enn
For $f\in H^s(\pa B)\ba H^{s+\varepsilon}(\pa B)$, it can be easily deduced from (\ref{220604-3}) that
\ben
\begin{cases}
[\sigma A_0(q/\sigma)-\tau A_0(\kappa^2)]f\in H^{s-1}(\pa B)\ba H^{s-1+\varepsilon}(\pa B) & \text{if }\tau\neq\sigma,\\
[\sigma A_0(q/\sigma)-\tau A_0(\kappa^2)]f\in H^{s+1}(\pa B)\ba H^{s+1+\varepsilon}(\pa B) & \text{if }\tau=\sigma\text{ and }\kappa\neq k.
\end{cases}
\enn
This implies $[\sigma A_0(q/\sigma)-\tau A_0(\kappa^2)]f\notin C^\infty(\pa B)$.
However, we conclude from Theorem \ref{t3.3} (a) that ${\rm Ran}\,G(k^2,\Om)\subset C^\infty(\pa B)$.
Therefore, $[\sigma A_0(q/\sigma)-\tau A_0(\kappa^2)]f\notin{\rm Ran}\,G(\kappa^2,\Om)$.
\end{proof}

Our sampling scheme to recover coefficients is based on solving \eqref{5}--\eqref{6} for different values of $k=\kappa$.
This motivates us to remove the assumption on eigenvalues of \eqref{5}--\eqref{6}.

\begin{theorem}\label{thm-2}
Let $\widehat B$ be a domain contained in the interior of $\Om$ and assume that \eqref{1'}--\eqref{3'} with $k=\kappa$ and $\Om=\widehat B$ is uniquely solvable.
Let $u$ solve \eqref{-1}--\eqref{-3} and $v=\tilde u_0(\cdot,\kappa^2)$ solve \eqref{1'}--\eqref{3'} with $k=\kappa$ and $\Om=\widehat B$.
Assume that $\ov{D}\subset\Om\subset\ov{\Om}\subset B$.
For $\tau\in\C$ and $\kappa\in\C$ define $\tilde p(\tau,\kappa^2):=\sigma\pa_\nu u-\tau\pa_\nu\tilde u_0(\cdot,\kappa^2)$ on $\pa B$.
Then the following statements are true:

(i) If $\tau=\sigma$ and $\kappa^2=q/\sigma$, then $\tilde p(\tau,\kappa^2)\in{\rm Ran}\,G(\kappa^2,\Om)$.

(ii) $\tilde p(\tau,\kappa^2)\in{\rm Ran}\,G(\kappa^2,\Om)$ if and only if $[\sigma A(q/\sigma,\widehat B)-\tau A(\kappa^2,\widehat B)]f\in{\rm Ran}\,G(k^2,\Om)$.
\end{theorem}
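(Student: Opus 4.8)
The plan is to mirror the proof of Theorem \ref{thm-1} almost verbatim, replacing the role of the free solution $u_0(\cdot,\kappa^2)$ by the impedance-obstacle solution $\tilde u_0(\cdot,\kappa^2)$ associated with the auxiliary obstacle $\widehat B\subset\Om$. The key structural point is that $\tilde u_0(\cdot,\kappa^2)$ still satisfies the Helmholtz equation in $B\ba\ov{\widehat B}$, hence in particular in the annular region $B\ba\ov{\Om}$, and that $u$ and $\tilde u_0$ share the same Dirichlet data $f$ on $\pa B$. Thus their difference is a Helmholtz solution in $B\ba\ov{\Om}$ vanishing on $\pa B$, which is exactly the object that the operator $G(\kappa^2,\Om)$ produces.

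For part (i), assume $\tau=\sigma$ and $\kappa^2=q/\sigma$. Then $\tilde p(\sigma,q/\sigma)=\sigma\pa_\nu\big(u-\tilde u_0(\cdot,q/\sigma)\big)|_{\pa B}$. Set $v:=\sigma\big(u-\tilde u_0(\cdot,q/\sigma)\big)$; it solves $\Delta v+k^2 v=0$ in $B\ba\ov{\Om}$ (recall $\ov D\subset\Om$, so $u$ is a genuine Helmholtz solution there) and $v=0$ on $\pa B$. Hence $v$ solves the boundary value problem \eqref{7}--\eqref{9} with $g=(\pa_\nu+\eta)v$ on $\pa\Om$, and by the definition of $G(\kappa^2,\Om)$ we get $\tilde p(\sigma,q/\sigma)=G(\kappa^2,\Om)g\in{\rm Ran}\,G(\kappa^2,\Om)$.

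For part (ii), I would compute the difference $\tilde p(\tau,\kappa^2)-\tilde p(\sigma,q/\sigma)$ and identify it with $[\sigma A(q/\sigma,\widehat B)-\tau A(\kappa^2,\widehat B)]f$. Indeed,
\ben
\tilde p(\tau,\kappa^2)-\tilde p(\sigma,q/\sigma)
&=&\big(\sigma\pa_\nu u-\tau\pa_\nu\tilde u_0(\cdot,\kappa^2)\big)-\big(\sigma\pa_\nu u-\sigma\pa_\nu\tilde u_0(\cdot,q/\sigma)\big)\\
&=&\sigma\pa_\nu\tilde u_0(\cdot,q/\sigma)-\tau\pa_\nu\tilde u_0(\cdot,\kappa^2)
=[\sigma A(q/\sigma,\widehat B)-\tau A(\kappa^2,\widehat B)]f,
\enn
where the last equality uses that $\tilde u_0(\cdot,\kappa^2)$ is by definition the solution of \eqref{1'}--\eqref{3'} with $\Om=\widehat B$, so its Neumann trace on $\pa B$ is precisely $A(\kappa^2,\widehat B)f$, and similarly for $q/\sigma$ (here one needs that \eqref{1'}--\eqref{3'} with $k^2=q/\sigma$, $\Om=\widehat B$ is uniquely solvable, which should be arranged together with the hypothesis for $\kappa$, or noted as an implicit assumption). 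Since part (i) gives $\tilde p(\sigma,q/\sigma)\in{\rm Ran}\,G(q/\sigma,\Om)={\rm Ran}\,G(k^2,\Om)$, the equivalence in (ii) follows immediately: $\tilde p(\tau,\kappa^2)\in{\rm Ran}\,G(\kappa^2,\Om)$ iff the difference lies in ${\rm Ran}\,G(\kappa^2,\Om)$, i.e. iff $[\sigma A(q/\sigma,\widehat B)-\tau A(\kappa^2,\widehat B)]f\in{\rm Ran}\,G(\kappa^2,\Om)$.

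The routine parts are the trace/regularity bookkeeping ensuring $g\in H^{-1/2}(\pa\Om)$ so that $G(\kappa^2,\Om)g$ makes sense, which follows from Remark \ref{rem-} and the trace theorem cited there. The only mild subtlety — and the point I would flag as the main obstacle — is making sure all the solution concepts are simultaneously well-defined: one must choose $\widehat B$ (a disk inside $\Om$, adjusting its radius via the strong monotonicity of Dirichlet eigenvalues as in the earlier auxiliary-domain construction, or simply by the impedance condition with ${\rm Im}\,\eta\neq0$ guaranteeing no real eigenvalues) so that \eqref{1'}--\eqref{3'} is uniquely solvable for \emph{both} $k=\kappa$ and $k^2=q/\sigma$; this is what lets $A(q/\sigma,\widehat B)$ and $A(\kappa^2,\widehat B)$ appear in the statement. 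Given that, the argument is a one-line algebraic manipulation plus the range identification, exactly parallel to Theorem \ref{thm-1}.
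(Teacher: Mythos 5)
Your argument is exactly the paper's: the paper proves Theorem \ref{thm-2} only by remarking that it is ``similar to that of Theorem \ref{thm-1}'', and your proof is precisely that adaptation --- part (i) realizes $\sigma(u-\tilde u_0(\cdot,q/\sigma))$ as a Helmholtz solution in $B\ba\ov{\Om}$ vanishing on $\pa B$, so that its Neumann trace lies in ${\rm Ran}\,G(q/\sigma,\Om)$, and part (ii) is the same telescoping identity with $A_0$ replaced by $A(\cdot,\widehat B)$. Your side remark that \eqref{1'}--\eqref{3'} with $\Om=\widehat B$ must also be uniquely solvable at $k^2=q/\sigma$ (so that $A(q/\sigma,\widehat B)$ is well defined) is a point the paper leaves implicit.
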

%

The proof of Theorem \ref{thm-2} is similar to that of Theorem \ref{thm-1}.
Similarly, if $[\sigma A(q/\sigma,\widehat B)-\tau A(\kappa^2,\widehat B)]f\notin{\rm Ran}\,G(\kappa^2,\Om)$ whenever $(\tau,\kappa^2)\neq(\sigma,q/\sigma)$, then $(\sigma,q)$ can be recovered by checking whether $\tilde p(\tau,\kappa^2)$ belongs to ${\rm Ran}\,G(\kappa^2,\Om)$.
Moreover, we have the following theorem.

\begin{theorem}\label{thm250518-1}
Let $\widehat B$ be a domain contained in $\Om$ and $f\in H^s(\pa B)\ba H^{s+\varepsilon}(\pa B)$ for any $s\geq-\frac12$ and $\varepsilon>0$.
Let $u$ solve \eqref{-1}--\eqref{-3} and $v=\tilde u_0(\cdot,\kappa^2)$ solve \eqref{1'}--\eqref{3'} with $(k,\Om)=(\kappa,\widehat B)$.
Assume $\ov{D}\subset\Omega\subset\ov{\Om}\subset B$.
If $(\tau,\kappa^2)\neq(\sigma,q/\sigma)$, then $[\sigma A(q/\sigma,\widehat B)-\tau A(\kappa^2,\widehat B)]f\notin{\rm Ran}\,G(\kappa^2,\Om)$.
\end{theorem}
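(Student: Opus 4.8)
The plan is to mimic the argument of Theorem \ref{53-thm1}, replacing the role of the interior-domain solution $u_0(\cdot,\kappa^2)$ of the free problem \eqref{5}--\eqref{6} by the solution $\tilde u_0(\cdot,\kappa^2)$ of the impedance problem \eqref{1'}--\eqref{3'} with $(k,\Om)=(\kappa,\widehat B)$, and observing that adding the small impedance obstacle $\widehat B$ only perturbs the Fourier multiplier by a smoothing operator, so it does not affect the Sobolev regularity of the tested datum. First I would reduce matters to a Fourier-series computation on $\pa B$: writing $f=\sum_{n\in\Z}c_ne^{in\theta}$ with $\|f\|_{H^s(\pa B)}^2=\sum_n(n^2+1)^s|c_n|^2$ as in \eqref{220604-3}, I would show $\sigma A(q/\sigma,\widehat B)f-\tau A(\kappa^2,\widehat B)f=\sum_n g_ne^{in\theta}$ and determine the leading asymptotics of $g_n$ as $|n|\to\infty$.

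The key step is the asymptotic analysis of the Dirichlet-to-Neumann symbol of the annular impedance problem \eqref{1'}--\eqref{3'} with $\Om=\widehat B$. Taking $B$ centered at the origin of radius $R$ and, say, $\widehat B$ a concentric disk of radius $r<R$ (the general case follows since moving $\widehat B$ inside $\Om$ changes $A(\cdot,\widehat B)$ by an operator with $C^\infty$-smooth Schwartz kernel, by interior elliptic regularity and the analyticity in Remark \ref{rem-}(v)), I would expand the solution in each Fourier mode as a combination $\alpha_nJ_n(\kappa|x|)+\beta_nY_n(\kappa|x|)$, impose $u=c_ne^{in\theta}$ on $|x|=R$ and $\pa_\nu v+\eta v=0$ on $|x|=r$, and solve the resulting $2\times2$ system for the normal derivative at $|x|=R$. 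For large $|n|$ the Bessel/Neumann functions satisfy $J_{|n|}(t)\sim \frac{1}{\sqrt{2\pi|n|}}\bigl(\frac{et}{2|n|}\bigr)^{|n|}$ and $Y_{|n|}(t)\sim-\sqrt{\frac{2}{\pi|n|}}\bigl(\frac{et}{2|n|}\bigr)^{-|n|}$, so the $Y_n$-contribution (carrying the information about $\widehat B$) is exponentially suppressed at $|x|=R$ relative to the $J_n$-contribution; hence the DtN symbol of \eqref{1'}--\eqref{3'} agrees with that of the free problem \eqref{5}--\eqref{6} up to an $O\bigl((r/R)^{2|n|}\bigr)$ error, which is rapidly decreasing. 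Consequently, using the relation
\[
\frac{J'_n(t)}{J_n(t)}=\frac{|n|}{t}-\frac{t}{2(|n|+1)}\Bigl\{1+O\Bigl(\tfrac1{|n|}\Bigr)\Bigr\},\qquad|n|\to\infty,
\]
(cited from \cite[Section 3.2]{Cakoni14} and \cite[Section 3.5]{CK19}), one gets the same leading expansion as in Theorem \ref{53-thm1}:
\[
g_n=c_n\Bigl\{(\sigma-\tau)\frac{|n|}{R}+\frac{(\sigma k^2-\tau\kappa^2)R}{2(|n|+1)}+O\Bigl(\tfrac1{|n|^2}\Bigr)\Bigr\},\qquad|n|\to\infty,
\]
where $k^2=q/\sigma$, with an additional exponentially small correction that changes nothing.

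From this expansion the conclusion follows exactly as before. If $\tau\neq\sigma$, the $|n|/R$ term dominates and $\sum_n(n^2+1)^{s-1}|g_n|^2$ behaves like $\|f\|_{H^s(\pa B)}^2$ up to bounded factors, so $[\sigma A(q/\sigma,\widehat B)-\tau A(\kappa^2,\widehat B)]f\in H^{s-1}(\pa B)\ba H^{s-1+\varepsilon}(\pa B)$; if $\tau=\sigma$ but $\kappa\neq k$ then $\sigma k^2-\tau\kappa^2=\sigma(k^2-\kappa^2)\neq0$, the $O(1/|n|)$ term dominates, and $[\sigma A(q/\sigma,\widehat B)-\tau A(\kappa^2,\widehat B)]f\in H^{s+1}(\pa B)\ba H^{s+1+\varepsilon}(\pa B)$. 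In either case the datum fails to be $C^\infty$ on $\pa B$, whereas by Theorem \ref{t3.3}(a) every element of ${\rm Ran}\,G(\kappa^2,\Om)$ lies in $C^\infty(\pa B)$; hence $[\sigma A(q/\sigma,\widehat B)-\tau A(\kappa^2,\widehat B)]f\notin{\rm Ran}\,G(\kappa^2,\Om)$.

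\textbf{Main obstacle.} The only non-routine point is justifying rigorously that introducing the impedance obstacle $\widehat B$ (and, in the non-concentric case, its arbitrary position inside $\Om$) perturbs the boundary DtN symbol only by a rapidly-decreasing-in-$n$ term — i.e. that the difference $A(\kappa^2,\widehat B)-A_0(\kappa^2)$ is smoothing in the scale of Sobolev spaces on $\pa B$. This is precisely the content of Remark \ref{0915}(i) (with $\Om$ there taken to be $\widehat B$): since $v-u_0$ solves \eqref{7}--\eqref{9} with datum $g=-(\pa_\nu u_0+\eta u_0)$ on $\pa\widehat B$, Theorem \ref{t3.3}(a) gives $A(\kappa^2,\widehat B)-A_0(\kappa^2):H^s(\pa B)\to H^{m-3/2}(\pa B)$ bounded for every $m$, so this difference maps into $C^\infty(\pa B)$ and does not alter the singular part of the datum. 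Thus one may, without loss of generality, carry out the asymptotic computation as if $\widehat B$ were absent, and the theorem reduces to Theorem \ref{53-thm1}.
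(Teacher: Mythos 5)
Your argument is fine in the case it actually covers, and there it coincides with the paper's: when neither $q/\sigma$ nor $\kappa^2$ is an eigenvalue of \eqref{5}--\eqref{6}, the paper too simply invokes the smoothing property of $A(\cdot,\widehat B)-A_0(\cdot)$ (Theorems \ref{t3.3} and \ref{t3.5}, Remark \ref{0915}) and quotes the Fourier--Bessel asymptotics from the proof of Theorem \ref{53-thm1}. But there is a genuine gap: Theorem \ref{thm250518-1}, unlike Theorem \ref{53-thm1}, does \emph{not} assume that $q/\sigma$ and $\kappa^2$ avoid the Dirichlet eigenvalues of \eqref{5}--\eqref{6}; removing that assumption is the whole point of replacing $u_0(\cdot,\kappa^2)$ by the impedance-obstacle solution $\tilde u_0(\cdot,\kappa^2)$ (see the sentence in the paper immediately before Theorem \ref{thm-2}). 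In the eigenvalue case your reduction collapses: $A_0(\kappa^2)$, the Green function $\Psi_\kappa$, the representation \eqref{211025-1}, and hence Remark \ref{0915}(i) are simply not defined, so the statement ``$A(\kappa^2,\widehat B)-A_0(\kappa^2)$ is smoothing, hence the theorem reduces to Theorem \ref{53-thm1}'' is unavailable (and Theorem \ref{53-thm1} itself excludes this case by hypothesis). Your concentric Bessel computation also tacitly compares the annular DtN symbol with ``the symbol of the free problem,'' i.e.\ with $\kappa J_n'(\kappa R)/J_n(\kappa R)$, which degenerates for the finitely many modes with $J_n(\kappa R)=0$.

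The paper closes exactly this gap by a truncation argument you do not supply: choose $N$ so large that for $|n|\ge N$ one has $J_n(\kappa R)\neq0$ (and likewise for $k$) and no eigenfunction components occur, split $f=(f-f^{(N)})+f^{(N)}$ with $f^{(N)}=\sum_{|n|\ge N}c_ne^{in\theta}$, observe that $f-f^{(N)}\in C^\infty(\pa B)$ is mapped into $C^\infty(\pa B)$ by Remark \ref{rem-}(iii), build the explicit mode solution $u_0^{(N)}(\cdot,\kappa^2)$ for the tail, and control $A(\kappa^2,\widehat B)f^{(N)}-\pa_\nu u_0^{(N)}(\cdot,\kappa^2)|_{\pa B}=-G(\kappa^2,\widehat B)[(\pa_\nu+\eta)u_0^{(N)}|_{\pa\widehat B}]\in C^\infty(\pa B)$ via the smoothing of $G(\kappa^2,\widehat B)$; only then do the large-$|n|$ asymptotics give \eqref{55-0}. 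Your direct annulus computation could in principle be repaired along the same lines (only finitely many modes are problematic, so the tail asymptotics survive), but as written your proof neither notices the eigenvalue case nor provides an argument for it, so it proves only what Theorem \ref{53-thm1} already states.
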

\begin{proof}

If neither $q/\sigma$ nor $\kappa^2$ is an eigenvalue of \eqref{5}--\eqref{6}, we can deduce from Theorems \ref{t3.3} and \ref{t3.5} that $A(q/\sigma,\widehat B)-A_0(q/\sigma):H^s(\pa B)\to H^{m-3/2}(\pa B)$ and $A(\kappa^2,\widehat B)-A_0(\kappa^2):H^s(\pa B)\to H^{m-3/2}(\pa B)$ are bounded for all $m\geq1$.
From the proof of Theorem \ref{53-thm1}, we conclude that
\be\label{55-0}
\begin{cases}
[\sigma A(q/\sigma,\widehat B)-\tau A(\kappa^2,\widehat B)]f\in H^{s-1}(\pa B)\ba H^{s-1+\varepsilon}(\pa B) & \text{if }\tau\neq\sigma,\\
[\sigma A(q/\sigma,\widehat B)-\tau A(\kappa^2,\widehat B)]f\in H^{s+1}(\pa B)\ba H^{s+1+\varepsilon}(\pa B) & \text{if }\tau=\sigma\text{ and }\kappa^2\neq q/\sigma.
\end{cases}
\en

If $\kappa^2$ is an eigenvalue of \eqref{5}--\eqref{6}, proceeding as in the proof of \cite[Theorem 2.33]{KH} the corresponding eigenfunction of \eqref{5}--\eqref{6} can be represented by
\ben
\sum_{n\in\Z}\alpha_nJ_n(\kappa|x|)e^{in\theta},
\enn
where $\theta\in[0,2\pi)$ is given by $x/|x|=(\cos\theta,\sin\theta)$.
Since there are only finitely many linearly independent eigenfunctions possible (see \cite[Theorem 5.1]{CK19}), we conclude that there exists an integer $N=N(\kappa^2)$ such that $\alpha_n=0$ and $J_n(\kappa R)\neq0$ for $|n|\geq N$.
In view of \eqref{53-1}, we define
\ben
f^{(N)}(x)=\sum_{|n|\geq N}c_ne^{in\theta},\quad x\in\pa B,
\enn
then $f-f^{(N)}\in C^\infty(\pa B)$.
It follows from Remark \ref{rem-} (iii) that
\be\label{55-}
[\sigma A(q/\sigma,\widehat B)-\tau A(\kappa^2,\widehat B)](f-f^{(N)})\in C^\infty(\pa B).
\en
Moreover,
\ben
u_0^{(N)}(\cdot,\kappa^2)=\sum_{|n|\geq N}c_n\frac{J_n(\kappa|x|)}{J_n(\kappa R)}e^{in\theta},\quad x\in B,
\enn
solves the Helmholtz equation with wavenumber $\kappa^2$ in $B$ and $u_0^{(N)}(\cdot,\kappa^2)=f^{(N)}$ on $\pa B$.
Therefore, for $N>\max\{N(q/\sigma),N(\kappa^2)\}$ we have
\ben
\sigma\pa_\nu u_0^{(N)}(\cdot,k^2)-\tau\pa_\nu u_0^{(N)}(\cdot,\kappa^2)=\sigma\sum_{|n|\geq N}c_n\frac{kJ'_n(kR)}{J_n(kR)}e^{in\theta}-\tau\sum_{|n|\geq N}c_n\frac{\kappa J'_n(\kappa R)}{J_n(\kappa R)}e^{in\theta},
\enn
where $k^2=q/\sigma$.
Note that $A(\kappa^2,\widehat B)f^{(N)}-\pa_\nu u_0^{(N)}(\cdot,\kappa^2)|_{\pa B}=-G(\kappa^2,\widehat B)[(\pa_\nu+\eta)u_0^{(N)}(\cdot,\kappa^2)|_{\pa\widehat B}]\in C^\infty(\pa B)$ and $A(k^2,\widehat B)f^{(N)}-\pa_\nu u_0^{(N)}(\cdot,k^2)|_{\pa B}=-G(k^2,\widehat B)[(\pa_\nu+\eta)u_0^{(N)}(\cdot,k^2)|_{\pa\widehat B}]\in C^\infty(\pa B)$.
Analogously to the proof of Theorem \ref{53-thm1}, we can deduce from \eqref{55-} that \eqref{55-0} remains valid.

It follows from \eqref{55-0} that $[\sigma A(q/\sigma,\widehat B)-\tau A(\kappa^2,\widehat B)]f\notin C^\infty(\pa B)$.
However, Theorem \ref{t3.3} (a) implies ${\rm Ran}\,G(k^2,\Om)\subset C^\infty(\pa B)$.
Therefore, $[\sigma A(q/\sigma,\widehat B)-\tau A(\kappa^2,\widehat B)]f\notin{\rm Ran}\,G(\kappa^2,\Om)$.
\end{proof}

\begin{remark}

(a) From the above proof, we see that a functions from ${\rm Ran}\,G(\kappa^2,\Om)$ is smooth and $[\sigma A(q/\sigma,\widehat B)-\tau A(\kappa^2,\widehat B)]f$ is not smooth provided $f$ is nonsmooth and $(\tau,\kappa^2)\neq(\sigma,q/\sigma)$.
{\bl Since a smooth transform does not change regularities, the proposed numerical method for recovering constant parameters is also applicable if the disk $B$ is replaced by any bounded domain with smooth boundary.}

{\x (b) In view of \cite[Theorem 2.2]{CK19}, the solution $v$ to \eqref{7}--\eqref{9} is analytic in $B\ba\ov{\Om}$.
Let $\widehat\Om$ be a domain with analytic boundary $\pa\widehat\Om$ such that $\Om\subset\ov{\Om}\subset\widehat\Om\subset\ov{\widehat\Om}\subset B$, for example $\widehat\Om$ is a disk centered at the center of $B$ with radius slightly less than that of $B$.
Then $v$ solves the Helmholtz equation in the analytic domain $B\ba\ov{\widehat\Om}$ with analytic Dirichlet boundary value on $\pa\widehat\Om$ and homogeneous Dirichlet boundary value on $\pa B$.
Therefore, $v$ is analytic in $\ov{B}\ba\widehat\Om$ and $\pa_\nu v|_{\pa B}\in C^\om(\pa B)$, where $C^\om(\pa B)$ denotes the set of all analytic functions defined on $\pa B$.
This shows ${\rm Ran}\,G(\kappa^2,\Om)\in C^\om(\pa B)$.}
If we can find a Dirichlet boundary value $f\in H^{-1/2}(\pa B)$ such that $[\sigma A(q/\sigma,\widehat B)-\tau A(\kappa^2,\widehat B)]f\notin C^\om(\pa B)$ whenever $(\tau,\kappa^2)\neq(\sigma,q/\sigma)$, then $\tilde p(\tau,\kappa^2)\in{\rm Ran}\,G(\kappa^2,\Om)$ if and only if $(\tau,\kappa^2)=(\sigma,q/\sigma)$.
Actually, if $f\notin C^\om(\pa B)$, then $[\sigma A_0(q/\sigma)-\tau A_0(\kappa^2)]f\notin C^\om(\pa B)$ provided $(\tau,\kappa^2)\neq(\sigma,q/\sigma)$.
Assume to the contrary that $[\sigma A_0(q/\sigma)-\tau A_0(\kappa^2)]f\in C^\om(\pa B)$, then the solution $v(\cdot,q/\sigma)$ to \eqref{1'}--\eqref{3'} with $(k^2,\Om)=(q/\sigma,\widehat B)$ and the solution $v(\cdot,\kappa^2)$ to \eqref{1'}--\eqref{3'} with $(k^2,\Om)=(\kappa^2,\widehat B)$ satisfies the following interior transmission problem
\ben
\Delta v(\cdot,q/\sigma)+q/\sigma v(\cdot,q/\sigma)=0 && \text{in }B\ba\ov{\widehat\Om},\\
\Delta v(\cdot,\kappa^2)+\kappa^2v(\cdot,\kappa^2)=0 && \text{in }B\ba\ov{\widehat\Om},\\
v(\cdot,q/\sigma)-v(\cdot,\kappa^2)=0,\;\sigma\pa_\nu v(\cdot,q/\sigma)-\tau\pa_\nu v(\cdot,\kappa^2)=g_{\pa B} && \text{on }\pa B,\\
v(\cdot,q/\sigma)-v(\cdot,\kappa^2)=f_{\pa\widehat\Om},\;\sigma\pa_\nu v(\cdot,q/\sigma)-\tau\pa_\nu v(\cdot,\kappa^2)=g_{\pa\widehat\Om} && \text{on }\pa\widehat\Om.
\enn
Note that $g_{\pa B}=[\sigma A_0(q/\sigma)-\tau A_0(\kappa^2)]f\in C^\om(\pa B)$, $f_{\pa\widehat\Om},g_{\pa\widehat\Om}\in C^\om(\pa\widehat\Om)$ due to Remark \ref{rem-} (v).
By carefully choosing the radius of the disk $\widehat\Om$, the uniqueness of above interior transmission problem can be guaranteed (see \cite[Section 6.3]{Cakoni14} and \cite{CGH2010} for the discreteness of transmission eigenvalues), and the well-posedness of above interior transmission problem can be established in a similar manner as in \cite[Section 4]{wcy} no matter $\sigma$ equals $\tau$ or not.
We conclude from the analyticity of the boundary and boundary data that $v(\cdot,q/\sigma),v(\cdot,\kappa^2)\in C^\om(B\ba\ov{\widehat\Om})$ and $v(\cdot,q/\sigma)|_{\pa B},v(\cdot,\kappa^2)|_{\pa B}\in C^\om(\pa B)$.
This contradicts to $v(\cdot,q/\sigma)|_{\pa B}=v(\cdot,\kappa^2)|_{\pa B}=f\notin C^\om(\pa B)$.
\end{remark}

\begin{remark}\label{rem-250518-1}
(a) Instead of by calculating $G(\kappa^2,\Om)$ directly, we obtain ${\rm Ran}\,G(\kappa^2,\Om)$ indirectly by calculating $A(\kappa^2,\Om)$ and $A(\kappa^2,\widetilde\Om)$ with $\widetilde\Om\subset\Om$ as mentioned in the previous section.
More precisely, we have ${\rm Ran}\,G(\kappa^2,\Om)={\rm Ran}[A(\kappa^2,\Om)-A(\kappa^2,\widetilde\Om)]_\#^{1/2}$ (see Theorems \ref{thm3.8_ID} and \ref{thm3.8m_ID}).
Noting that $A(\kappa^2,\Om)$, $A(\kappa^2,\widetilde\Om)$, and $p(\tau,\kappa^2)$ are equivalent to corresponding Cauchy data, we are able to recover the coefficients $\sigma$ and $q$ in a data-to-data manner.

(b) Theorems \ref{thm-2} and \ref{thm250518-1} can be viewed as a uniqueness result for the inverse problem to determine $\sigma$ and $q$ from a single pair of Cauchy data to \eqref{-1}--\eqref{-3}.

(c) Proceeding as above we conclude that $G(k^2,\Om)$ can be replaced by $G(k^2,\mho)$.
For numerical implementation, $G(k^2,\mho)$ can be obtained from \eqref{220623-1}.
Moreover, $\tilde p(\tau,\kappa^2)$ in Theorem \ref{thm-2} can be replaced by $\sigma\pa_\nu u-\tau\pa_\nu\tilde u_0(\cdot,\kappa^2)$ where $w=\tilde u_0(\cdot,\kappa^2)$ uniquely solves \eqref{m1}--\eqref{m2} with $k=\kappa$ and ${\rm supp}(1-n)=:\ov{\mho}=\widehat B$ satisfying $\ov{\widehat B}\subset\Om$.

(d) For an explicit example for $f\in H^s(\pa B)\ba H^{s+\varepsilon}(\pa B)$, we refer to \cite[Remark 3.4]{XH24}.
\end{remark}


\section{Reconstruction of the polygon}\label{sec4}

Since $\sigma$ and $q$ can be theoretically determined and numerically recovered by a single pair of Cauchy data as shown in the previous section, we will continue with the assumption that the constants $\sigma$ and $q$ are known.
This section will focus on uniqueness results and numerical methods for recovering the location and shape of $D$.
In view of \cite[Theorem 5.2 and Corollary 5.3]{CK19}, \cite[Theorem 5.5]{CK19} and \cite[Theorem 3.7 (iii)]{XH24}, we immediately obtain the following theorems.

\begin{theorem}
Let $c$ be the first positive zero of Bessel function $J_0$.
Suppose that $f\not\equiv0$ and $k\cdot{\rm diam}(D)<c$, where ${\rm diam}(D)$ denotes the diameter of $D$, then $D$ can be uniquely determined by a single pair of Cauchy data $(f,\pa_\nu u|_{\pa B})$ where $u$ solves the boundary value problem (\ref{1})--(\ref{3}).
\end{theorem}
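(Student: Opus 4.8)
The plan is to establish uniqueness by a contradiction argument combined with the corner-singularity analysis that underlies the one-wave factorization method. Suppose $D_1$ and $D_2$ are two convex polygons, each contained in $B$ and each satisfying $k\cdot{\rm diam}(D_j)<c$, that produce the same Cauchy data $(f,\pa_\nu u|_{\pa B})$ with the same nonzero $f$. Let $u_1$, $u_2$ be the corresponding solutions of \eqref{1}--\eqref{3}. By Holmgren's uniqueness theorem (or unique continuation for the Helmholtz equation, using analyticity from Remark \ref{rem-} (v)), the difference $u_1-u_2$ vanishes in the unbounded connected component of $B\ba(\ov{D_1}\cup\ov{D_2})$ that touches $\pa B$, hence $u_1=u_2=:u$ there. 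The first step is therefore to set up this common-solution region and to reduce the problem to showing $D_1=D_2$.

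Assume for contradiction that $D_1\neq D_2$. Since both are convex polygons, there is (without loss of generality) a corner point $z^*$ of $D_1$ that lies on $\pa D_1$ but in the exterior of $\ov{D_2}$; more precisely, $z^*$ has a neighborhood $U$ such that $U\ba\ov{D_1}$ is a sector of opening angle strictly less than $\pi$ (convexity) and $U\cap\ov{D_2}=\emptyset$. In this neighborhood $u$ extends as a solution of the Helmholtz equation across $\pa D_2$ (since $u=u_2$ is analytic there), so $u$ solves $\Delta u+k^2u=0$ in the full sector $U$ while simultaneously satisfying $u=0$ on the two edges of $\pa D_1$ meeting at $z^*$. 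The key step is then to invoke the corner-singularity results cited in the paper (\cite{BLS,ElHu2015,hu2018,HL,KS,MaHu}): a Helmholtz solution that is real-analytic in a neighborhood of a convex corner and vanishes on both edges emanating from that corner must vanish identically near $z^*$. I would use precisely the lemma of this type underlying the one-wave factorization method. Once $u\equiv0$ near $z^*$, unique continuation forces $u\equiv0$ in the whole connected region where it is defined, hence $f=u|_{\pa B}\equiv0$, contradicting $f\not\equiv0$.

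The role of the diameter condition $k\cdot{\rm diam}(D)<c$ enters through the quoted results \cite[Theorem 5.2, Corollary 5.3, Theorem 5.5]{CK19} and \cite[Theorem 3.7 (iii)]{XH24}: it guarantees that $k^2$ is \emph{not} a Dirichlet eigenvalue of the Helmholtz operator on $D$ (the smallest Dirichlet eigenvalue of a domain of diameter $d$ exceeds $(c/d)^2$ by domain monotonicity and the eigenvalue of a disk of radius $d$), and likewise that $k^2$ is not an eigenvalue of the configuration \eqref{1}--\eqref{3}, so that the forward problem is well posed and the indicator-function machinery applies. Concretely, I would use this to rule out the degenerate case where $D_1\subsetneq D_2$ (one polygon strictly inside the other): if no corner of $D_1$ lies outside $\ov{D_2}$ then $\ov{D_1}\subseteq\ov{D_2}$, and on $D_2\ba\ov{D_1}$ the function $u$ (which equals $u_2$) satisfies the Helmholtz equation with $u=0$ on all of $\pa(D_2\ba\ov{D_1})$; the diameter/eigenvalue bound says $k^2$ is not a Dirichlet eigenvalue of $D_2\ba\ov{D_1}\subset D_2$, forcing $u\equiv0$ on $D_2\ba\ov{D_1}$, and then again by unique continuation $u\equiv0$ everywhere and $f\equiv0$, a contradiction. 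So either way $D_1=D_2$.

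\textbf{Main obstacle.} I expect the delicate point to be the corner-singularity step: one must verify that $u$ is genuinely analytic up to and across the relevant portion of $\pa D_2$ near $z^*$ (so that the extension into the full sector $U$ is legitimate), and then apply the correct version of the vanishing lemma for convex corners — the argument is different for the convex case (angle $<\pi$) than for reflex corners, and the cited literature must be quoted with the right hypotheses on the opening angle and on the regularity of $u$ at the vertex. A secondary, more routine obstacle is bookkeeping the connectedness of $B\ba(\ov{D_1}\cup\ov{D_2})$ and making sure the unique-continuation chain reaches from the neighborhood of $z^*$ back to $\pa B$; this is standard given analyticity but should be stated carefully.
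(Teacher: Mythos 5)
Your frame (two polygons $D_1\neq D_2$ with the same Cauchy data, $u_1=u_2$ in the component of $B\setminus(\ov{D_1}\cup\ov{D_2})$ touching $\pa B$ by Holmgren/unique continuation, then a contradiction with $f\not\equiv0$) is the right one, but the pivotal step in your main branch is false. There is no lemma saying that a Helmholtz solution which is analytic in a full neighborhood of a convex corner and vanishes on both edges must vanish near that corner: for a right angle the function $u(x_1,x_2)=\sin(\alpha x_1)\sin(\beta x_2)$ with $\alpha^2+\beta^2=k^2$ is entire, nonzero, and vanishes on both edges, and analogous counterexamples ($J_n(kr)\sin(n\phi)$) exist for every opening angle of the form $\pi/n$; only for angles that are irrational multiples of $\pi$ does the reflection argument force local vanishing. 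The papers you cite (corners always scatter, etc.) concern corners of penetrable media or source/non-scattering configurations and do not supply such a statement here. This is exactly why the paper's Theorem \ref{thm220605-3} needs extra hypotheses on $f$ (e.g.\ $f$ is not the trace of a solution vanishing on $\pa D$, or $f\neq0$ pointwise, or $f$ non-smooth) to exclude analytic extension across a corner; the present theorem has no such hypothesis, so your corner route cannot close.

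The intended proof (the paper obtains the theorem directly from \cite[Theorem 5.2 and Corollary 5.3]{CK19} and \cite[Theorem 5.5]{CK19}) is the Schiffer/Colton--Sleeman eigenvalue-comparison argument, which is precisely the mechanism you used in your ``nested'' case --- and it works in all cases, with no corner analysis and no convexity needed. If $D_1\neq D_2$, then without loss of generality some component $D^*$ of $D_2\ba\ov{D_1}$ is nonempty (for two convex polygons the complement $B\ba(\ov{D_1}\cup\ov{D_2})$ is connected, so there are no ``pockets'' to track, and there is no unbounded component to speak of --- everything lies in $B$). On $D^*$ the function $u_1$ solves the Helmholtz equation and vanishes on $\pa D^*$: on $\pa D^*\cap\pa D_1$ by the boundary condition, and on $\pa D^*\cap\pa D_2$ because there $u_1=u_2=0$ by continuity from the common exterior. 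If $u_1\not\equiv0$ in $D^*$, then $k^2$ would be a Dirichlet eigenvalue of $D^*$; but $D^*\subset D_2$ lies in a disk of radius ${\rm diam}(D_2)<c/k$, whose first Dirichlet eigenvalue is $(c/{\rm diam}(D_2))^2>k^2$, so by the strong monotonicity of Dirichlet eigenvalues \cite[Theorem 5.2, Corollary 5.3]{CK19} this is impossible. Hence $u_1\equiv0$ in $D^*$, and by analyticity and unique continuation $u_1\equiv0$ in $B\ba\ov{D_1}$, giving $f\equiv0$, a contradiction. Your attribution of the diameter condition to well-posedness of \eqref{1}--\eqref{3} is also off target: it does not preclude $k^2$ from being an eigenvalue of the exterior problem (the paper assumes that separately); its sole role is the eigenvalue bound on subdomains of $D_2$ just described.
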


If the solution $u$ to (\ref{1})--(\ref{3}) can be analytically extended to the domain $B$, then the extended function $u$ must be a Dirichlet eigenfunction over $D$.
Therefore, it is also possible to determine $D$ if $f$ is not the  boundary value of some function whose restriction to $D$ is a Dirichlet eigenfunction. Below we show the singularity of $u$ around corners of the polygonal obstacle $D$.

\begin{theorem}\label{thm220605-3}
Suppose that one of the following conditions {\bl holds}:
\begin{itemize}
\item[(i)] {\x $f$ is not identically zero on $\pa B$ and $f\notin\{v|_{\partial B}:\Delta v+k^2v=0\text{ in }B\text{ and }v|_{\pa D}=0\}$; $D$ is a convex polygon such that ${\rm diam}(D)<{\rm dist}(D,\pa B)$.}

\item[(ii)] $f(x)\neq 0$ for all $x\in \partial B$; $D$ is a convex polygon.

\item[(iii)] {\x $f\in H^s(\pa B)\ba H^{s+\varepsilon}(\pa B)$ for any $s\geq-\frac12$ and $\varepsilon>0$; $D$ is a convex polygon such that ${\rm diam}(D)<{\rm dist}(D,\pa B)$.}
\end{itemize}
Then the solution $u$ to (\ref{1})--(\ref{3}) cannot be analytically extended across any corner of $D$.
\end{theorem}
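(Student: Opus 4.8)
The plan is to argue by contradiction: suppose the solution $u$ to (\ref{1})--(\ref{3}) extends analytically across some corner point $x_0$ of $D$, and derive that $f$ must have one of the properties excluded in (i), (ii), (iii). The key tool is the corner singularity analysis for the Helmholtz equation under a homogeneous Dirichlet condition: if a solution of $\Delta u + k^2 u = 0$ in a neighbourhood of a convex corner vanishes on both edges meeting at $x_0$ and is real-analytic at $x_0$, then expanding $u$ in a Fourier--Bessel (Vekua-type) series in polar coordinates centred at $x_0$, the requirement that the expansion vanish on the two straight edges forces every mode to vanish, so $u\equiv 0$ in a neighbourhood of $x_0$; this is the analogue of the standard lemma in \cite{BLS,ElHu2015,hu2018,HL,KS,MaHu}. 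First I would invoke this to conclude that, under the hypothesis of analytic extension, $u$ vanishes identically in a neighbourhood of $x_0$, and then by the unique continuation principle for the Helmholtz equation (since $u$ is analytic in $B\setminus\overline D$ by Remark~\ref{rem-}(v), and now also analytic in a neighbourhood of $x_0$ intersecting that set) $u\equiv 0$ in the connected open set $B\setminus\overline D$.

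Next I would exploit $u\equiv 0$ in $B\setminus\overline D$ together with the boundary data. On $\pa B$ we get $f = u|_{\pa B} = 0$, which already contradicts the hypothesis $f\not\equiv 0$ in case (ii) (where in fact $f(x)\neq 0$ everywhere) and the first clause of case (i); it also contradicts $f\in H^s(\pa B)\setminus H^{s+\varepsilon}(\pa B)$ in case (iii), since $0$ lies in every Sobolev space. This disposes of (ii) and (iii) and of the non-triviality clause of (i) immediately. For the more delicate part of (i), where we must handle $f\notin\{v|_{\pa B}: \Delta v + k^2 v = 0\ \text{in}\ B,\ v|_{\pa D}=0\}$, the argument is slightly different: here one does not want to assume the analytic extension of $u$ vanishes near the corner but rather that $u$ extends to an analytic solution $\tilde u$ of the Helmholtz equation on all of $B$. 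Then $\tilde u$ satisfies $\Delta \tilde u + k^2\tilde u = 0$ in $B$ with $\tilde u|_{\pa D} = u|_{\pa D} = 0$, so $f = \tilde u|_{\pa B}$ lies exactly in the excluded set — a contradiction. The condition ${\rm diam}(D) < {\rm dist}(D,\pa B)$ is the geometric hypothesis ensuring that the analytic continuation argument (e.g. via a chain of balls, or a single ball containing $D$ but not touching $\pa B$) can be carried out: near a corner the solution can only be continued a bounded distance, and this inequality guarantees that one can continue $u$ from $B\setminus\overline D$ across $\pa D$ into a full neighbourhood of $D$, hence to all of $B$ by the connectedness argument above.

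Putting these together, the skeleton is: (1) assume analytic extension across a corner $x_0$; (2) apply the Fourier--Bessel corner-singularity lemma to get $u\equiv 0$ near $x_0$; (3) by unique continuation $u\equiv 0$ in $B\setminus\overline D$, or alternatively $u$ extends to an eigenfunction-type solution on all of $B$; (4) read off the contradiction with the Dirichlet data $f$ in each of the three cases, using $f = 0$ (or $f = \tilde u|_{\pa B}$ with $\tilde u|_{\pa D}=0$) against the stated hypotheses on $f$. The main obstacle I expect is step (2): one must be careful about the angular opening of the corner (convexity of $D$ means each interior angle is in $(0,\pi)$), about the fact that the Vekua transform reduces the Helmholtz case to the harmonic case so that the same vanishing-of-all-modes conclusion goes through, and about the precise regularity needed for the corner expansion to be valid — this is exactly where the cited corner-analysis results \cite{BLS,ElHu2015,hu2018,HL,KS,MaHu} do the heavy lifting, and I would quote them rather than redo the expansion. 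A secondary technical point is justifying in case (i) that ${\rm diam}(D) < {\rm dist}(D,\pa B)$ suffices for the global continuation to all of $B$; this is a routine but necessary covering argument.
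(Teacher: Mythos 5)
There is a genuine gap, and it lies exactly where you put the weight: your step (2) ``key lemma'' is false. For a convex polygon the exterior opening angle at a corner is $\phi_0=2\pi-\alpha\in(\pi,2\pi)$, and whenever $\phi_0$ is a rational multiple of $\pi$ there are nontrivial solutions of the Helmholtz equation that are real-analytic across the corner and vanish on both edges: e.g.\ for a right-angle corner ($\phi_0=3\pi/2$) the entire circular wave function $J_2(kr)\sin(2\theta)$ vanishes on the rays $\theta=0$ and $\theta=3\pi/2$ but is not identically zero. More tellingly, your steps (2)--(3) use no hypothesis on $f$ at all, so they would prove that analytic extension across a corner forces $u\equiv0$ in $B\ba\ov D$; but if $f$ is the trace on $\pa B$ of a nontrivial solution $v$ of $\Delta v+k^2v=0$ in $B$ with $v|_{\pa D}=0$ (the very set excluded in hypothesis (i); such $v$ exists, e.g., when $D$ is a rectangle and $k^2$ one of its Dirichlet eigenvalues), then $u=v|_{B\ba\ov D}$ extends analytically across every corner without vanishing. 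The cited corner-scattering papers do not supply the statement you need: they concern incident/scattered fields or penetrable corners and use additional structure (radiation condition, nonvanishing of the field at the corner), not the bare claim ``zero Dirichlet data on two edges plus analyticity implies local vanishing''. Consequently your disposal of cases (ii) and (iii) via ``$f\equiv0$'' collapses, and in (iii) the contradiction must instead come from regularity of $f$, not from its vanishing.

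Your fallback argument for case (i) --- extend $u$ to an analytic solution $\tilde u$ on all of $B$ with $\tilde u|_{\pa D}=0$ and conclude $f$ lies in the excluded set --- is essentially the paper's proof, but the continuation into $D$ is precisely the nontrivial step, and ``a routine covering argument'' does not deliver it. The paper obtains it from the Schwarz reflection principle for the Helmholtz equation (odd reflection across the edge lines on which $u$ vanishes, see \cite{MaHu}), with the hypothesis ${\rm diam}(D)<{\rm dist}(D,\pa B)$ guaranteeing that the reflected regions stay inside $B\ba\ov D$, where $u$ is already defined, so that the reflections consistently fill $D$ and produce the Dirichlet-eigenfunction-type extension. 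For case (ii) the paper argues as in \cite[Theorem 5.5]{CK19}: the zero set of $u$ propagates by analyticity along the supporting lines of the two edges meeting the corner until they hit $\pa B$, forcing $f$ to vanish at some boundary points, contradicting $f(x)\neq0$ on $\pa B$; and for case (iii) the reflection/continuation argument shows $f$ would have to be smooth on $\pa B$, contradicting $f\notin H^{s+\varepsilon}(\pa B)$. So the correct mechanism throughout is reflection plus unique continuation tied to the specific hypotheses on $f$, not a local vanishing lemma at the corner.
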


\begin{proof} (i) Since $f$ is not identically zero on $\pa B$, $u$ is not identically zero in $B\ba\ov{D}$.
Assume to the contrary that $u$ can be analytically across a corner $z$ of $D$.
By the Schwartz reflection principle of Helmholtz equation (see e.g., \cite{MaHu}), we deduce from ${\rm diam}(D)<{\rm dist}(D,\pa B)$ that $u$ can be analytically extended as a Dirichlet eigenfunction in $D$.
This contradicts the assumption on $f$.

(ii) {\x The proof in the second case is similar to \cite[Theorem 5.5]{CK19}.}

(iii) {\x Assume to the contrary that $u$ can be analytically across a corner $z$ of $D$.
By the Schwartz reflection principle of Helmholtz equation (see e.g., \cite{MaHu}), we can show that $u$ can be analytically extended across $\pa B$ and $f$ is analytic on $\pa B$ which contradicts $f\notin H^{s+\varepsilon}(\pa B)$.}
\end{proof}

{\x Theorem \ref{thm220605-3} can be viewed as uniqueness results that determining unknown Dirichlet polygon $D$ from a single pair of Cauchy data under some a priori assumptions.
Using the reflection principle for the Helmholtz equation, it is also possible to prove the results of Theorem \ref{thm220605-3} (ii) and (iii) for non-convex polygons.}
The above proofs of uniqueness results cannot be directly applied in numerical implementation.
The one-wave factorization method explored within this paper is based on the following theorem {\x similar to \cite[Theorem 3.8]{XH24}.}

\begin{theorem}\label{thm211027}
Let the assumptions of Theorem \ref{thm220605-3} be fulfilled.
Assume $\Omega$ is a convex domain such that $\widehat B\subset\Om\subset B$.
It holds that

(i) Assume further that $k^2$ is not an eigenvalue of \eqref{5}--\eqref{6}.
Then $D\subset\Omega$ if and only if $p(\sigma,q/\sigma)\in{\rm Ran}\,G(q/\sigma,\Omega)$, where $p(\cdot,\cdot)$ is defined in Theorem \ref{thm-1};

(ii) $D\subset\Omega$ if and only if $\tilde p(\sigma,q/\sigma)\in{\rm Ran}\,G(q/\sigma,\Omega)$, where $\tilde p(\cdot,\cdot)$ is defined in Theorem \ref{thm-2}.
\end{theorem}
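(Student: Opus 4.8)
The plan is to prove both directions of the equivalence in each part, using the factorization machinery established in Sections \ref{s2}. I will focus on part (ii), since part (i) is entirely analogous (replacing the role of $\tilde u_0(\cdot,q/\sigma)$ solving \eqref{1'}--\eqref{3'} by $u_0(\cdot,q/\sigma)$ solving \eqref{5}--\eqref{6}, and invoking the eigenvalue-free hypothesis).

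First I would prove the ``only if'' direction. Assume $D\subset\Om$. Since $(\tau,\kappa^2)=(\sigma,q/\sigma)$, Theorem \ref{thm-2}(i) gives exactly that $\tilde p(\sigma,q/\sigma)\in{\rm Ran}\,G(q/\sigma,\Om)$; more explicitly, $\tilde p(\sigma,q/\sigma)=\sigma\pa_\nu(u-\tilde u_0(\cdot,q/\sigma))|_{\pa B}=G(q/\sigma,\Om)g$ where $g=(\pa_\nu+\eta)[\sigma(u-\tilde u_0(\cdot,q/\sigma))]$ on $\pa\Om$, using that $u-\tilde u_0(\cdot,q/\sigma)$ solves the Helmholtz equation in $B\ba\ov{D}\supset B\ba\ov{\Om}$ and vanishes on $\pa B$. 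This is the easy direction and follows directly from the cited theorem.

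The substantive direction is ``if.'' Suppose $\tilde p(\sigma,q/\sigma)=G(q/\sigma,\Om)g$ for some $g$ in the appropriate trace space. Set $\phi:=u-\tilde u_0(\cdot,q/\sigma)$, so that $\sigma\pa_\nu\phi|_{\pa B}=G(q/\sigma,\Om)g$, i.e. $\pa_\nu\phi|_{\pa B}$ coincides with the Neumann trace on $\pa B$ of the solution $v$ to \eqref{7}--\eqref{9} with right-hand side $g/\sigma$; moreover $\phi|_{\pa B}=0=v|_{\pa B}$. Thus $\phi$ and $v$ are two functions with the same Cauchy data on $\pa B$, both satisfying the Helmholtz equation with wavenumber $q/\sigma$ near $\pa B$ — $\phi$ in $B\ba\ov{D}$ and $v$ in $B\ba\ov{\Om}$. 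By Holmgren's uniqueness theorem (or unique continuation), $\phi=v$ in the connected component of $(B\ba\ov{D})\cap(B\ba\ov{\Om})$ adjacent to $\pa B$, hence $\phi$ extends analytically into $B\ba\ov{\Om}$ as the function $v$ (using Remark \ref{rem-}(v), $v$ is analytic in $B\ba\ov{\Om}$). Consequently $u=\tilde u_0(\cdot,q/\sigma)+v$ extends analytically into $B\ba\ov{\Om}$. Now argue by contradiction: if $D\not\subset\Om$, then since both $D$ and $\Om$ are convex and $\widehat B\subset\Om$, the boundary $\pa D$ must have a corner point $z\in\ov{B\ba\ov{\Om}}$ — more carefully, $D\not\subset\Om$ forces $\pa D$ to meet $B\ba\ov{\Om}$, and by convexity of $\Om$ one locates a corner of $D$ lying in the region where $u$ has been shown to be analytic. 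At such a corner $u$ would be analytically extendable across the corner, contradicting Theorem \ref{thm220605-3}. Therefore $D\subset\Om$.

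The main obstacle is the geometric/topological step: from analytic extendability of $u$ into $B\ba\ov{\Om}$ one must exhibit an actual corner of $D$ across which $u$ extends analytically, in order to invoke Theorem \ref{thm220605-3}. This requires care about connectedness of the overlap region and about ruling out the degenerate configuration where $\pa D$ and $\pa\Om$ are tangent without an exposed corner; the convexity of both $D$ and $\Om$, together with $D$ being a polygon, is what makes this work — if $D\not\subset\Om$ then at least one vertex of the polygon $D$ lies outside $\ov{\Om}$, and a neighborhood of that vertex lies in $B\ba\ov{\Om}$ (shrinking if necessary, using ${\rm diam}(D)<{\rm dist}(D,\pa B)$ to stay inside $B$), so $u$ is analytic there, which is precisely the excluded situation. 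Once this localization is pinned down, the rest is a routine combination of Holmgren's theorem and the corner-singularity result, and part (i) follows by the same argument with the eigenvalue hypothesis used to guarantee well-posedness and the factorization from Theorem \ref{t3.5}.
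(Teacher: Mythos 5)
Your proposal is correct and follows essentially the argument the paper intends (the proof is omitted there with a pointer to \cite[Theorem 3.8]{XH24}): the easy direction via Theorems \ref{thm-1}(i)/\ref{thm-2}(i), and the converse by matching Cauchy data on $\pa B$ with the solution of \eqref{7}--\eqref{9}, invoking Holmgren/unique continuation on the component of $B\ba(\ov D\cup\ov\Om)$ adjacent to $\pa B$, and then contradicting the corner non-extendability of Theorem \ref{thm220605-3} at a vertex of $D$ lying outside $\ov\Om$ (which exists by convexity, since if all vertices were in $\ov\Om$ then $D\subset\Om$). The geometric points you flag (connectedness of $B\ba(\ov D\cup\ov\Om)$ and the exposed vertex, plus analyticity of $u_0$, respectively $\tilde u_0$, near that vertex thanks to $\widehat B\subset\Om$) are exactly the details needed and are handled correctly.
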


\begin{remark}\label{rem1029-2}
(i) Theorem \ref{thm211027} remains valid with ${\rm Ran}\,G(q/\sigma,\Om)$ replaced by ${\rm Ran}\,G(q/\sigma,\mho)$.

(ii) A characterization of a convex polygon $D\subset B$ in terms of a single pair of Cauchy data on $\partial B$ is given as follows.
Denote by $\mathcal O$ the set of all convex Lipschitz domains that contained in the interior of $B$.
We conclude from Theorem \ref{thm211027} that
\ben
D=\bigcap\left\{\Omega\in\mathcal O:p(\sigma,q/\sigma)\in{\rm Ran}\,G(q/\sigma,\Omega)\right\}.
\enn
By Theorem \ref{thm3.8} we have
\ben
D=\bigcap\left\{\Omega\in\mathcal O:p(\sigma,q/\sigma)\in{\rm Ran}\,[A(q/\sigma,\Omega)-A_0(q/\sigma)]_\#^{1/2}\right\}
\enn
provided $q/\sigma$ is not an eigenvalue of \eqref{5}--\eqref{6}.
Moreover, by Theorem \ref{thm3.8_ID} we have
\ben
D=\bigcap\left\{\Omega\in\mathcal O:\tilde p(\sigma,q/\sigma)\in{\rm Ran}\,[A(q/\sigma,\Omega)-A(q/\sigma,\widetilde\Om)]_\#^{1/2}\right\}.
\enn
\end{remark}


{\x In our numerical implementations, it is more practical to firstly apply the one-wave factorization method to get an initial guess of the inclusion and then improve the inversion results by an optimization-based iteration approach.
We note that the effectiveness of iteration method depends heavily on a good initial guess. 
For details on iteration method, we refer the reader to \cite{BC2002}.
An analogous numerical implementation for Laplace equation can be found in \cite{XH24}.
For simplicity, we omit the numerical implementation of iteration method.}

\section{Numerical examples}\label{s4}
\setcounter{equation}{0}

{\bl Proceeding as in \cite{XH24}, one can prove the well-posedness of above boundary value problems by the integral equation method and express the Dirichlet-to-Neumann operator using boundary integral operators.}
{\x Similarly, a graded mesh on the boundary of the polygon $\pa D$ is used for high order convergence (see \cite[Section 3.6]{CK19}, \cite{QZZ} and \cite{XH24}).
This section is devoted to numerical implementation of the methods introduced in this paper.}

\subsection{Numerical examples for the factorization method}

In this subsection, we display several numerical examples {\bl to implement the proposed factorization method by using the Dirichlet-to-Neumann operator}.
Combining Remark \ref{0915} (iv), Theorems \ref{thm3.8} and \ref{zB} and Picard's theorem \cite[Theorem 4.8]{CK19}, we immediately obtain the following result.

\begin{theorem}\label{imaging0}
Under the assumptions of Theorems \ref{thm3.8} and \ref{zB}, the following statements hold:

(i) Denote by $(\lambda_n,\varphi_n)$ an eigensystem of $[A(k^2,\Om)-A_0(k^2)]_\#$.
Define
\ben
I_{obstacle}(z):=\left[\sum_{n}\frac{|(\pa_\nu\Psi_k(\cdot,z),\varphi_n)|^2}{|\lambda_n|}\right]^{-1},\quad z\in B,
\enn
where $(\cdot,\cdot)$ denotes the inner product in $L^2(\pa B)$.
Then $z\in\Om$ if and only if $I_{obstacle}(z)>0$.

(ii) Denote by $(\mu_n,\phi_n)$ an eigensystem of $[A(k^2,\mho)-A_0(k^2)]_\#$.
Define
\ben
I_{medium}(z):=\left[\sum_{n}\frac{|(\pa_\nu\Psi_k(\cdot,z),\phi_n)|^2}{|\mu_n|}\right]^{-1},\quad z\in B,
\enn
where $(\cdot,\cdot)$ denotes the inner product in $L^2(\pa B)$.
Then $z\in\mho$ if and only if $I_{medium}(z)>0$.
\end{theorem}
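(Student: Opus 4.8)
The plan is to derive Theorem~\ref{imaging0} as a direct consequence of the range identity in Theorem~\ref{thm3.8}, the range characterization in Theorem~\ref{zB}, and Picard's criterion for membership in the range of a compact operator. First I would record that by Remark~\ref{0915}~(i)--(ii) the operators $A(k^2,\Om)-A_0(k^2)$ and $A(k^2,\mho)-A_0(k^2)$ map $L^2(\pa B)$ into $L^2(\pa B)$ and are, in fact, compact (being compositions involving the compact operators $H$, $H_\mho$ from Remark~\ref{0915}~(iii), or alternatively because their ranges lie in $H^{m-3/2}(\pa B)$ for every $m$). Consequently $[A(k^2,\Om)-A_0(k^2)]_\#$ and $[A(k^2,\mho)-A_0(k^2)]_\#$ are self-adjoint, positive, compact operators on $L^2(\pa B)$, so each admits a (countable) eigensystem $(\la_n,\varphi_n)$, respectively $(\mu_n,\phi_n)$, with $\la_n,\mu_n>0$ accumulating only at $0$ and with $\{\varphi_n\}$, $\{\phi_n\}$ orthonormal bases of the closure of the respective ranges.

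Next I would invoke Picard's theorem (\cite[Theorem 4.8]{CK19}) in the form suited to the square root of a positive compact operator: for a positive self-adjoint compact $\mathcal{A}$ with eigensystem $(\la_n,\varphi_n)$, a function $\phi\in L^2(\pa B)$ lies in $\mathrm{Ran}\,\mathcal{A}^{1/2}$ if and only if
\[
\sum_n \frac{|(\phi,\varphi_n)|^2}{\la_n}<+\infty .
\]
Applying this with $\mathcal{A}=[A(k^2,\Om)-A_0(k^2)]_\#$ and $\phi=\pa_\nu\Psi_k(\cdot,z)|_{\pa B}$ shows that the series defining $I_{obstacle}(z)^{-1}$ converges (equivalently $I_{obstacle}(z)>0$) precisely when $\pa_\nu\Psi_k(\cdot,z)|_{\pa B}\in\mathrm{Ran}\,[A(k^2,\Om)-A_0(k^2)]_\#^{1/2}$. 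By Theorem~\ref{thm3.8}~(i) this range equals $\mathrm{Ran}\,G(k^2,\Om)$, and by Theorem~\ref{zB}~(i) membership of $\pa_\nu\Psi_k(\cdot,z)|_{\pa B}$ in $\mathrm{Ran}\,G(k^2,\Om)$ is equivalent to $z\in\Om$. Chaining these equivalences gives $I_{obstacle}(z)>0\iff z\in\Om$, which is assertion (i). Assertion (ii) is obtained verbatim with $\Om$ replaced by $\mho$, using Theorem~\ref{thm3.8}~(ii), Theorem~\ref{zB}~(ii), and the eigensystem $(\mu_n,\phi_n)$.

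The one point requiring a little care—and the main (mild) obstacle—is checking that $\pa_\nu\Psi_k(\cdot,z)|_{\pa B}$ genuinely belongs to $L^2(\pa B)$ so that Picard's theorem applies and the inner products $(\pa_\nu\Psi_k(\cdot,z),\varphi_n)$ are well defined; this follows from the analyticity of $\Psi_k(\cdot,z)$ in $B\ba\{z\}$ (Lemma~\ref{lem211025}, Remark~\ref{rem3.4}) since for $z\in B$ the point $z$ is interior and $\pa_\nu\Psi_k(\cdot,z)$ is smooth up to $\pa B$, hence in $C^\infty(\pa B)\subset L^2(\pa B)$. Everything else is a formal concatenation of the cited results, so the proof is short; in the paper it is natural to simply state that the theorem follows by combining Remark~\ref{0915}~(iv), Theorems~\ref{thm3.8} and \ref{zB}, and Picard's theorem, and to omit the routine verification.
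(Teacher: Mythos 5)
Your argument is correct and is exactly the chain the paper intends: the paper gives no written proof, simply stating that the result follows by combining Remark \ref{0915}, Theorems \ref{thm3.8} and \ref{zB}, and Picard's theorem, which is precisely the concatenation you spell out (including the routine check that $\pa_\nu\Psi_k(\cdot,z)|_{\pa B}\in L^2(\pa B)$). No gaps and no genuinely different route, so nothing further is needed.
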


Combining Remark \ref{0915} (iv), Theorems \ref{thm3.8_ID}, \ref{thm3.8m_ID} and \ref{zB_ID} and Picard's theorem \cite[Theorem 4.8]{CK19}, we immediately obtain the following result analogous to Theorem \ref{imaging0}.

\begin{theorem}\label{imaging1}
Under the assumptions of Theorems \ref{thm3.8_ID}, \ref{thm3.8m_ID}, and \ref{zB_ID}, it holds that:

(i) Denote by $(\tilde\lambda_n,\tilde\varphi_n)$ an eigensystem of $[A(k^2,\Om)-A(k^2,\widetilde\Om)]_\#$.
Define
\ben
\widetilde I_{obstacle}(z):=\left[\sum_{n}\frac{|(\pa_\nu\widetilde\Psi_k(\cdot,z),\tilde\varphi_n)|^2}{|\tilde\lambda_n|}\right]^{-1},\quad z\in B,
\enn
where $(\cdot,\cdot)$ denotes the inner product in $L^2(\pa B)$.
Then $z\in\Om$ if and only if $\widetilde I_{obstacle}(z)>0$.

(ii) Denote by $(\tilde\mu_n,\tilde\phi_n)$ an eigensystem of $[A(k^2,\mho)-A(k^2,\widetilde\mho)]_\#$.
Define
\ben
\widetilde I_{medium}(z):=\left[\sum_{n}\frac{|(\pa_\nu\widetilde\Psi'_k(\cdot,z),\tilde\phi_n)|^2}{|\tilde\mu_n|}\right]^{-1},\quad z\in B,
\enn
where $(\cdot,\cdot)$ denotes the inner product in $L^2(\pa B)$.
Then $z\in\mho$ if and only if $\widetilde I_{medium}(z)>0$.
\end{theorem}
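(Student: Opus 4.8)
The plan is to derive Theorem~\ref{imaging1} as an immediate corollary of the range identities of the previous subsection together with Picard's theorem, in exactly the way Theorem~\ref{imaging0} follows from Theorems~\ref{thm3.8} and \ref{zB}. I will carry out part~(i); part~(ii) is verbatim the same after replacing $(\Om,\widetilde\Om,\widetilde\Psi_k)$, the operator $[A(k^2,\Om)-A(k^2,\widetilde\Om)]_\#$ and its eigensystem $(\tilde\lambda_n,\tilde\varphi_n)$ by $(\mho,\widetilde\mho,\widetilde\Psi'_k)$, $[A(k^2,\mho)-A(k^2,\widetilde\mho)]_\#$ and $(\tilde\mu_n,\tilde\phi_n)$, and using Theorem~\ref{thm3.8m_ID} and Theorem~\ref{zB_ID}(ii) in the roles of Theorem~\ref{thm3.8_ID} and Theorem~\ref{zB_ID}(i).

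First I would set $T:=[A(k^2,\Om)-A(k^2,\widetilde\Om)]_\#$ and verify that $T$ is a compact, self-adjoint, non-negative operator on $L^2(\pa B)$. Compactness of $A(k^2,\Om)-A(k^2,\widetilde\Om)$ on $L^2(\pa B)$ comes from Remark~\ref{0915}(i): that operator maps $L^2(\pa B)$ boundedly into $H^{1/2}(\pa B)$, which embeds compactly into $L^2(\pa B)$; hence so are its self-adjoint parts ${\rm Re}$ and ${\rm Im}$, and then $T=|{\rm Re}(\cdot)|+|{\rm Im}(\cdot)|$ is compact, self-adjoint and $\geq0$ by construction. Thus $T$ has the eigensystem $(\tilde\lambda_n,\tilde\varphi_n)$ with $\tilde\lambda_n>0$ and $T^{1/2}$ has eigensystem $(\tilde\lambda_n^{1/2},\tilde\varphi_n)$. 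By Theorem~\ref{thm3.8_ID}, ${\rm Ran}\,T^{1/2}={\rm Ran}\,G(k^2,\Om)$, which is dense in $L^2(\pa B)$ by Remark~\ref{0915}(iv); since a self-adjoint operator with dense range is injective, $\{\tilde\varphi_n\}$ is a complete orthonormal system of $L^2(\pa B)$.

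Then I would apply Picard's theorem \cite[Theorem~4.8]{CK19} to $T^{1/2}$: a function $g\in L^2(\pa B)$ lies in ${\rm Ran}\,T^{1/2}$ if and only if $g\in\overline{{\rm Ran}\,T^{1/2}}$ and $\sum_n|(g,\tilde\varphi_n)|^2/|\tilde\lambda_n|<\infty$, where the first condition is automatic by density. Putting $g=\pa_\nu\widetilde\Psi_k(\cdot,z)|_{\pa B}$ and combining ${\rm Ran}\,T^{1/2}={\rm Ran}\,G(k^2,\Om)$ with Theorem~\ref{zB_ID}(i), we get that the series $\sum_n|(\pa_\nu\widetilde\Psi_k(\cdot,z),\tilde\varphi_n)|^2/|\tilde\lambda_n|$ converges precisely when $z\in\Om$; with the convention $\widetilde I_{obstacle}(z)=0$ when it diverges, this already yields the implication $\widetilde I_{obstacle}(z)>0\Rightarrow z\in\Om$. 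For the converse I would note that when $z\in\Om$ the sum is not only finite but strictly positive, because $\pa_\nu\widetilde\Psi_k(\cdot,z)|_{\pa B}\not\equiv0$ — otherwise $\widetilde\Psi_k(\cdot,z)$ would have vanishing Cauchy data on $\pa B$ and so vanish identically by Holmgren's uniqueness and analyticity, contradicting its logarithmic singularity at $z$ — and $\{\tilde\varphi_n\}$ is complete, so $(\pa_\nu\widetilde\Psi_k(\cdot,z),\tilde\varphi_n)\neq0$ for some $n$. Hence $\widetilde I_{obstacle}(z)>0$ if and only if $z\in\Om$, and the same argument gives $\widetilde I_{medium}(z)>0$ if and only if $z\in\mho$.

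I do not expect a genuine obstacle here: the substance is already contained in Theorems~\ref{thm3.8_ID}, \ref{thm3.8m_ID} and \ref{zB_ID}, and what remains is purely bookkeeping — confirming that $[A(k^2,\Om)-A(k^2,\widetilde\Om)]_\#$ (resp.\ $[A(k^2,\mho)-A(k^2,\widetilde\mho)]_\#$) is an admissible compact, self-adjoint, non-negative operator so that its eigensystem and positive square root make sense, together with the completeness of that eigensystem, which is exactly the ingredient that upgrades ``the Picard series converges'' to ``the indicator is strictly positive'' at interior sampling points. If any care is needed, it is only this last point and the matching of notation, Theorem~\ref{zB_ID}(ii) being read with the testing function $\pa_\nu\widetilde\Psi'_k(\cdot,z)|_{\pa B}$ that appears in $\widetilde I_{medium}$.
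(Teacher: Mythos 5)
Your proposal is correct and follows exactly the route the paper intends: Theorem \ref{imaging1} is obtained by combining the range identities of Theorems \ref{thm3.8_ID} and \ref{thm3.8m_ID}, the test-function characterization of Theorem \ref{zB_ID}, Remark \ref{0915} (iv) and Picard's theorem, just as Theorem \ref{imaging0} follows from Theorems \ref{thm3.8} and \ref{zB}; your extra bookkeeping (compactness, self-adjointness, completeness of the eigensystem, strict positivity at interior points) is exactly the content the paper leaves implicit. The only cosmetic point is that the smoothing property you quote from Remark \ref{0915} (i) is stated there for $A(k^2,\Om)-A_0(k^2)$, so for $A(k^2,\Om)-A(k^2,\widetilde\Om)$ one should invoke the analogous argument (e.g.\ via $\widetilde A_0$, which avoids any eigenvalue assumption on \eqref{5}--\eqref{6}), but this changes nothing of substance.
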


\begin{remark}
(i) For the case when impedance obstacle $\Om$ is replaced by a Dirichlet object in $B$, the analogue to Theorems \ref{imaging0} and \ref{imaging1} can be established in a similar manner.

(ii) The factorization method {\bl using the} Dirichlet-to-Neumann operator can be extended to inverse scattering problems {\bl with} near-field data, if the corresponding Dirichlet-to-Neumann operator can be represented in terms of near-field data.

(iii) {\bl Analogously to the limited aperture data case (see \cite[Section 2.3]{Kirsch08}), the factorization method introduced in Theorems \ref{imaging0} and \ref{imaging1} is applicable even if the Cauchy data on an open subset of $\pa B$ is available.}
\end{remark}

\begin{example}[Factorization method based on Dirichlet-to-Neumann operators]\label{Ex0819}
Let $B$ be a disk centered at the origin with radius $5$ and $k=1$.
The Dirichlet-to-Neumann operators $A(k^2,\Om)$, $A_0(k^2)$, $A(k^2,\widetilde\Om)$, $A(k^2,\mho)$, and $A(k^2,\widetilde\mho)$ are approximated by different $128\times128$ matrices, respectively.
The impedance coefficient on $\pa\widetilde\Om$ is $\eta=-ik$ and the refractive index of $\widetilde\mho$ is $(3-4i)^2$.
The numerical results of indicator functions $I_{obstacle}(z)$ and $I_{medium}(z)$ defined in Theorem \ref{imaging0} for different objects are shown in Figure \ref{ex0819} (a)--(e) and the numerical results of indicator functions $\widetilde I_{obstacle}(z)$ and $\widetilde I_{medium}(z)$ defined in Theorem \ref{imaging1} for different objects are shown in Figure \ref{ex0819} (f)--(j).
The object in Figure \ref{ex0819} (a) and (f) is a kite-shaped obstacle with  Dirichlet boundary condition, the object in Figure \ref{ex0819} (b) and (g) is a polygonal obstacle with Dirichlet boundary condition, the object in Figure \ref{ex0819} (c) and (h) is a peanut-shaped obstacle with Neumann boundary condition, the object in Figure \ref{ex0819} (d) and (i) is a medium with refractive index $1/4$, and the object in Figure \ref{ex0819} (e) and (j) is a medium with refractive index $4$.
\end{example}
\setcounter{subfigure}{0}
\begin{figure}[htbp]
  \centering
  \subfigure[]{
  \includegraphics[width=0.18\textwidth]{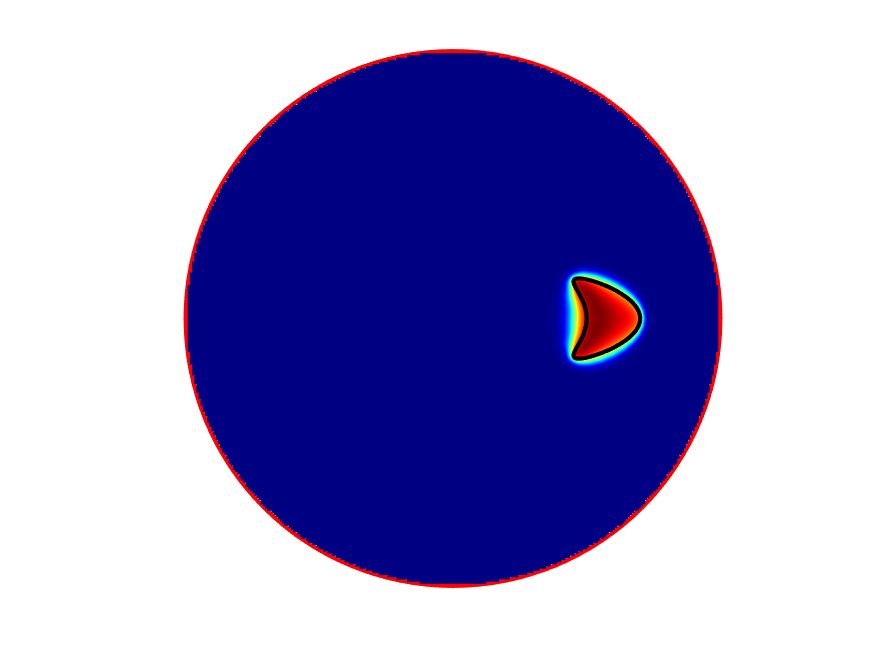}}
  \subfigure[]{
  \includegraphics[width=0.18\textwidth]{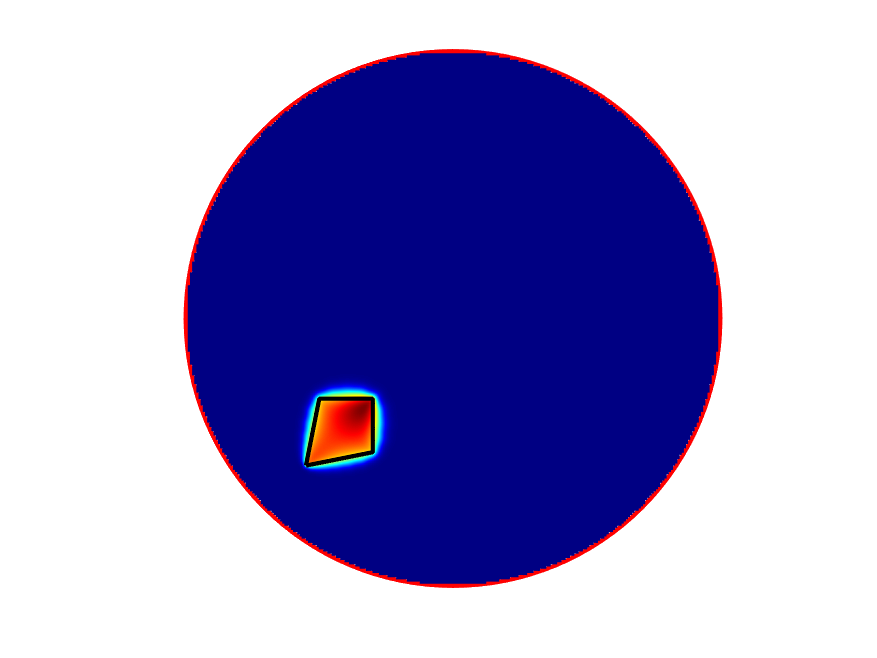}}
  \subfigure[]{
  \includegraphics[width=0.18\textwidth]{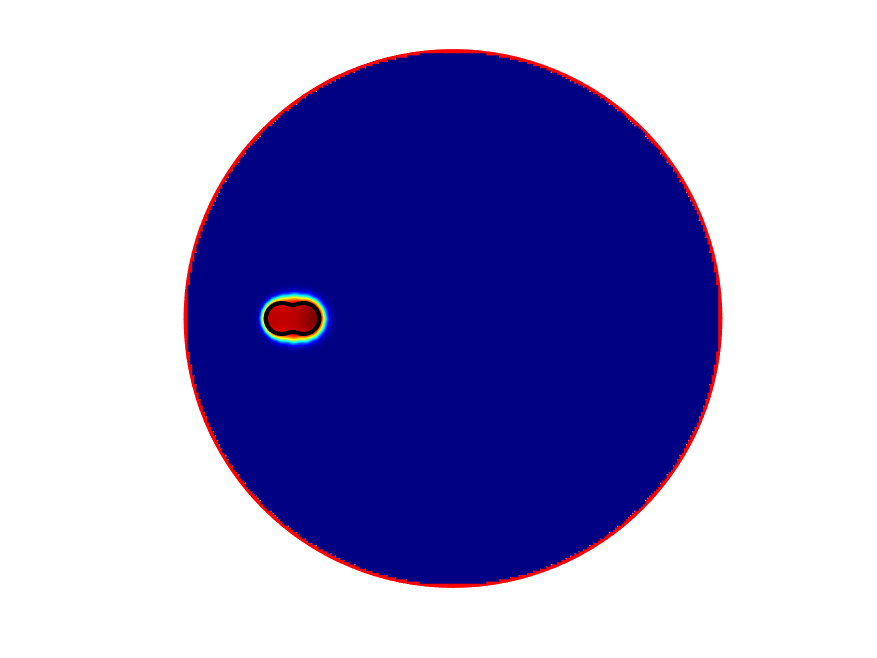}}
  \subfigure[]{
  \includegraphics[width=0.18\textwidth]{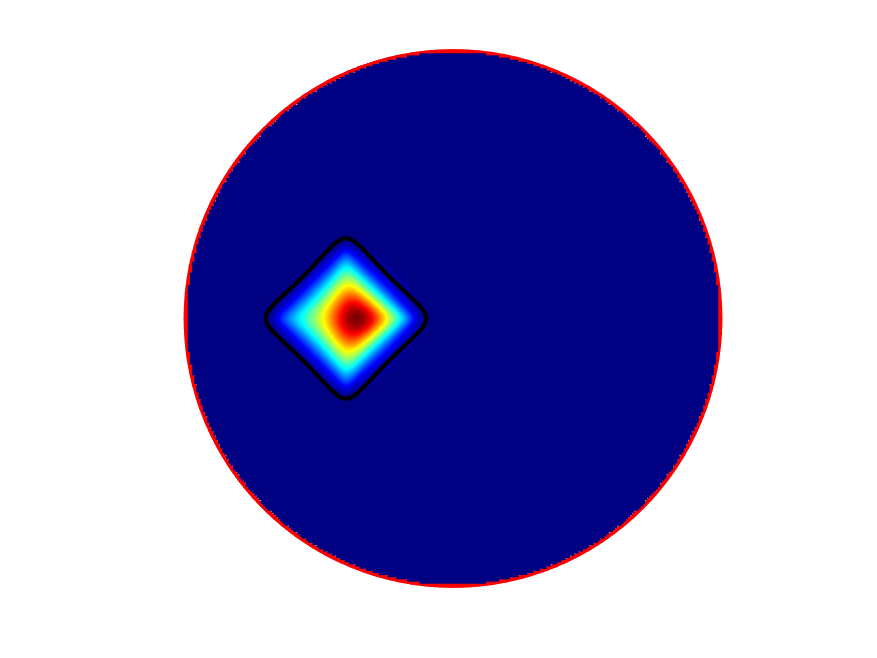}}
  \subfigure[]{
  \includegraphics[width=0.18\textwidth]{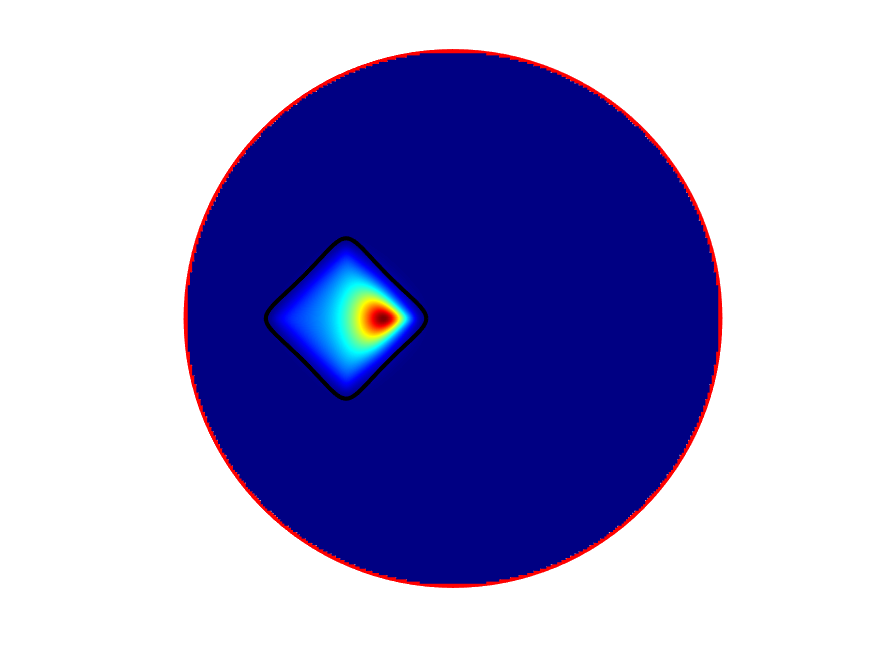}}\\
  \subfigure[]{
  \includegraphics[width=0.18\textwidth]{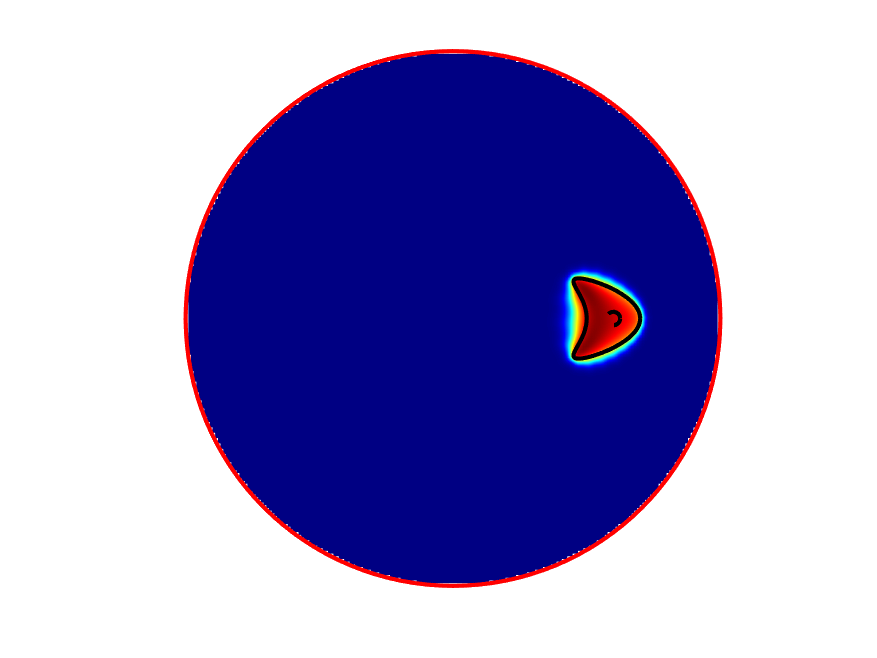}}
  \subfigure[]{
  \includegraphics[width=0.18\textwidth]{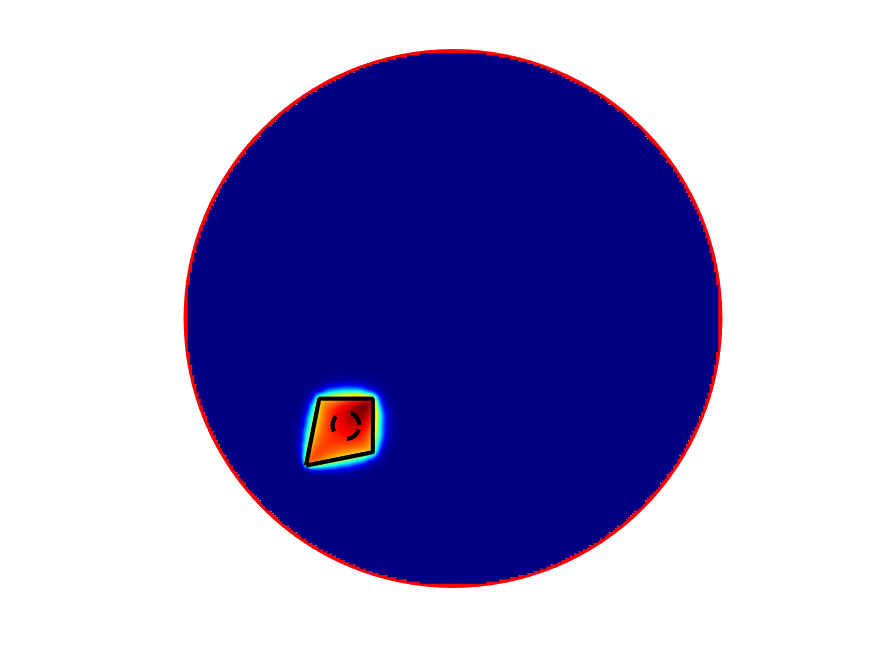}}
  \subfigure[]{
  \includegraphics[width=0.18\textwidth]{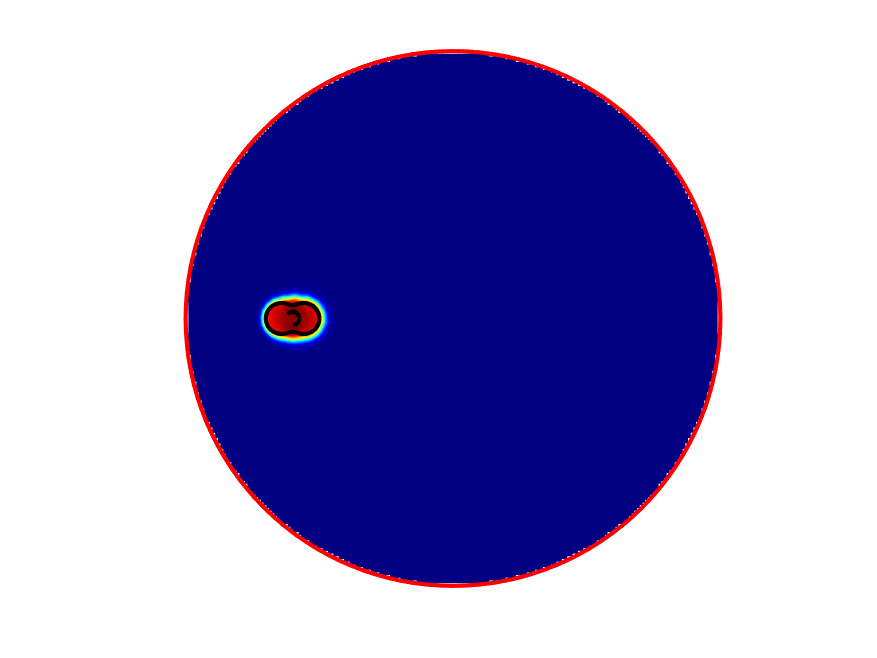}}
  \subfigure[]{
  \includegraphics[width=0.18\textwidth]{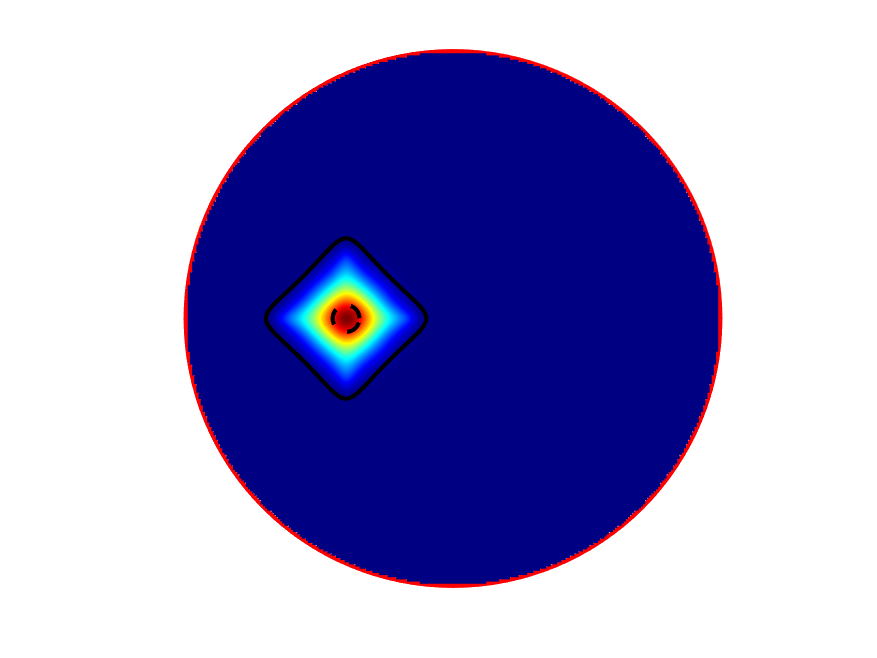}}
  \subfigure[]{
  \includegraphics[width=0.18\textwidth]{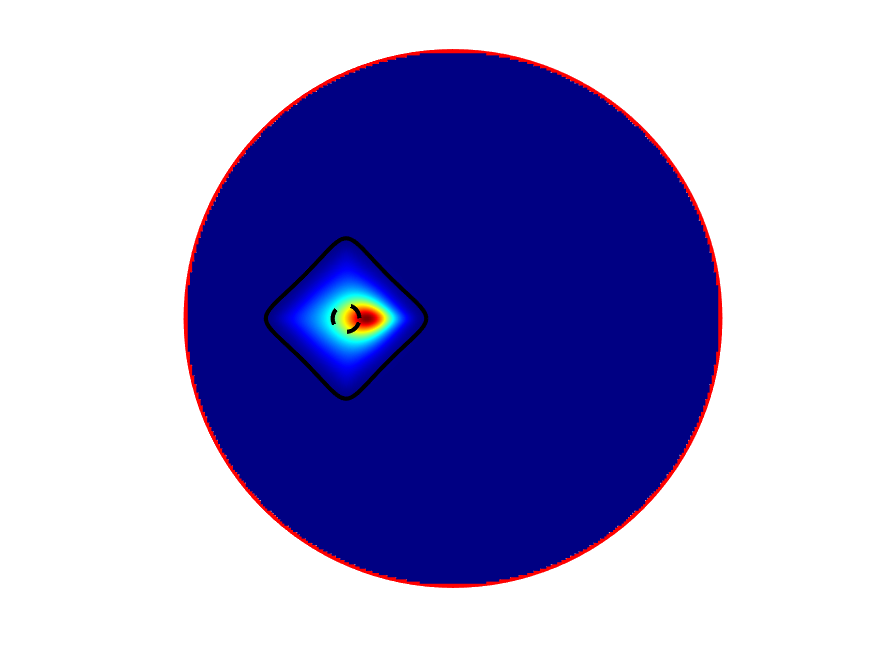}}
  \caption{Numerical examples for Example \ref{Ex0819}, where the red solid line, black solid line and black dashed line representing the disk $B$, the true shape of the unknown object $\Om$ (or $\mho$) and $\widetilde\Om$ inside obstacle $\Om$ (or $\widetilde\mho$ inside the medium with support $\ov{\mho}$), respectively.}\label{ex0819}
\end{figure}

\subsection{Numerical examples for recovering the coefficients}

In this subsection, we display several numerical examples for recovering the coefficients $\sigma$ and $q$ from a single pair of Cauchy data $(f,\tau\pa_\nu u)$ on $\pa B$.
In view of Theorems \ref{thm-1}, \ref{53-thm1}, \ref{thm-2}, \ref{thm250518-1}, Remark \ref{rem-250518-1} and the Picard's theorem (see \cite[Theorem 4.8]{CK19}), we define the following indicator functions to reconstruct the values of $\sigma$ and $q$:
\be\label{ind+}
I_1(\tau,\kappa^2):=\left[\sum_{n=1}^\infty\frac{|(p(\tau,\kappa^2),f_n)|^2}{|\lambda_n|}\right]^{-1},\\ \label{ind+tilde}
\widetilde I_1(\tau,\kappa^2):=\left[\sum_{n=1}^\infty\frac{|(\tilde p(\tau,\kappa^2),\tilde f_n)|^2}{|\tilde\lambda_n|}\right]^{-1},
\en
where $(\cdot,\cdot)$ denotes the inner product in $L^2(\pa B)$, $p(\tau,\kappa^2)$ is defined as in Theorem \ref{thm-1} with $f$ satisfying the condition in Theorem \ref{53-thm1}, $\tilde p(\tau,\kappa^2)$ is defined as in Theorem \ref{thm-2} with $f$ satisfying the condition in Theorem \ref{thm250518-1}, Lipschitz domain $\Om$ satisfies $\ov{D\cup\widetilde\Om}\subset\Om\subset\ov{\Om}\subset B$, and $(\lambda_n,f_n)$ and $(\tilde\lambda_n,\tilde f_n)$ are the eigensystems of $[A(\kappa^2,\Om)-A_0(\kappa^2)]_\#$ and $[A(\kappa^2,\Om)-A(\kappa^2,\widetilde\Om)]_\#$, respectively.
\begin{remark}
In view of Theorems \ref{thm3.8}, \ref{thm3.8_ID}, \ref{thm3.8m_ID} and Remark \ref{rem-250518-1} (c), we can define indicator functions similar to \eqref{ind+} and \eqref{ind+tilde} with obstacle $\Om$ replaced by the support $\ov{\mho}$ of a medium.
\end{remark}

\begin{figure}[htbp]
  \centering
  \subfigure[The geometry of \eqref{ind+} in Example \ref{ex1complex}.]{
  \includegraphics[width=0.4\textwidth]{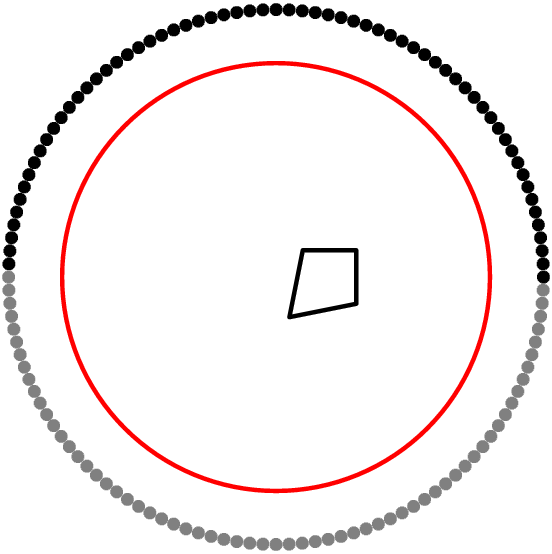}}
  \subfigure[The geometry of \eqref{ind+tilde} in Example \ref{ex1complex}.]{
  \includegraphics[width=0.4\textwidth]{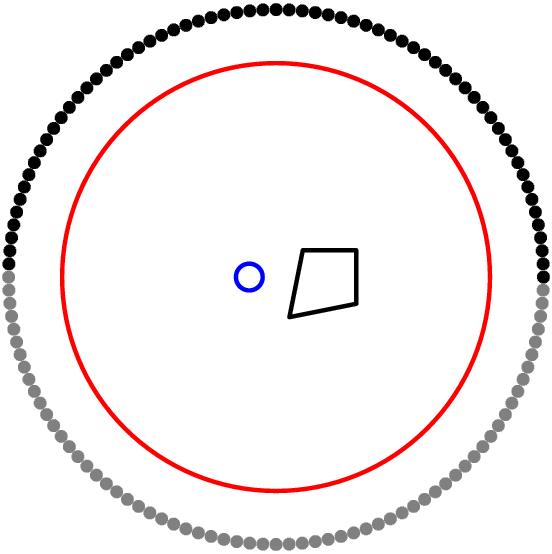}}
  \caption{The geometry of Example \ref{ex1complex}. The black line represents $\pa D$, the red line represents $\pa\Om$ (or $\pa\mho$), the circle with knots represents $\pa B$, and the blue line represents $\pa\widetilde\Om$ (or $\pa\widetilde\mho$). The boundary value $f$ takes value $1$ at black knots and takes value $0$ at gray knots on $\pa B$.}\label{ex_1_geo}
\end{figure}

\begin{example}[Reconstruction of coefficients]\label{ex1complex}
Let $B$ be a disk centered at the origin with radius $5$.
Let $D$ be a polygon with corners given by $(0.25,-0.75)$, $(1.5,-0.5)$, $(1.5,0.5)$, and $(0.5,0.5)$.
Let $\Om$ be an impedance obstacle and $\ov{\mho}$ be the support of a homogeneous medium. In this example, $\Om$ (or $\mho$) is assumed to be a disk centered at the origin with radius $4$ with impedance boundary condition $\pa_\nu u+i|\kappa|u=0$ on $\pa\Om$ (or wave number $|\kappa|(2+i)$ inside $\mho$) for each sampling value of $\kappa$, $\widetilde\Om$ (or $\widetilde\mho$) is set to be a disk centered at $(-0.5,0)$ with radius $0.25$ with impedance boundary condition $\pa_\nu u-i|\kappa|u=0$ on $\pa\widetilde\Om$ (or wave number $|\kappa|(2-i)$ inside $\widetilde\mho$) for each sampling value of $\kappa$
, and $\widehat B$ is set to be the same as $\widetilde\Om$ (or $\widetilde\mho$).
The geometry is shown in Figure \ref{ex_1_geo}.
The numerical results for \eqref{ind+} and \eqref{ind+tilde}, to simultaneously reconstruct two coefficients, are shown in Figure \ref{ex_1simultaneous}.
\end{example}

\setcounter{subfigure}{0}
\begin{figure}[htbp]
  \centering
  \subfigure[$I_1(\tau_j,\kappa_\ell)$ with $\Om$]{
  \includegraphics[width=0.23\textwidth]{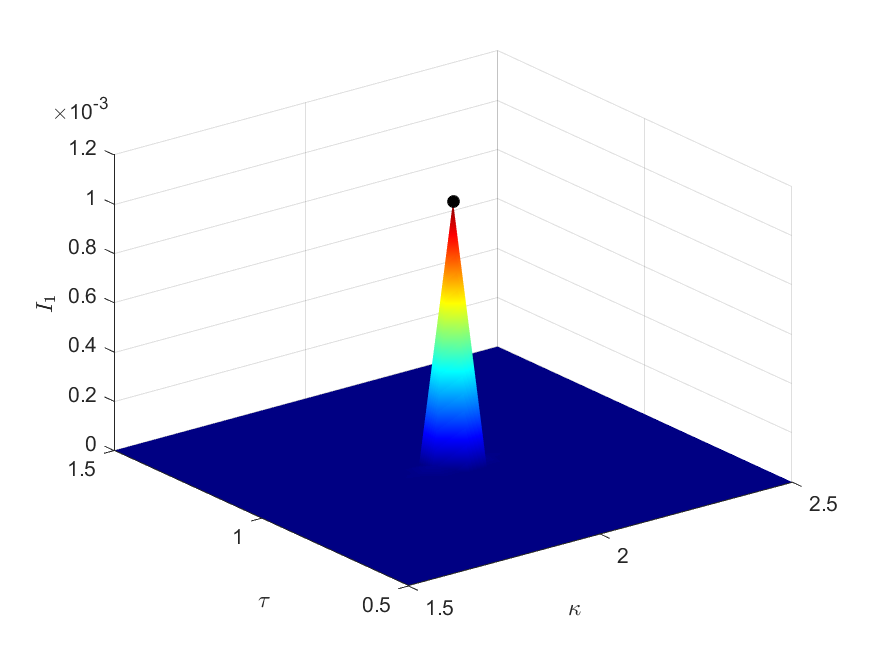}}
  \subfigure[$\ln(I_1(\tau_j,\kappa_\ell))$ with $\Om$]{
  \includegraphics[width=0.23\textwidth]{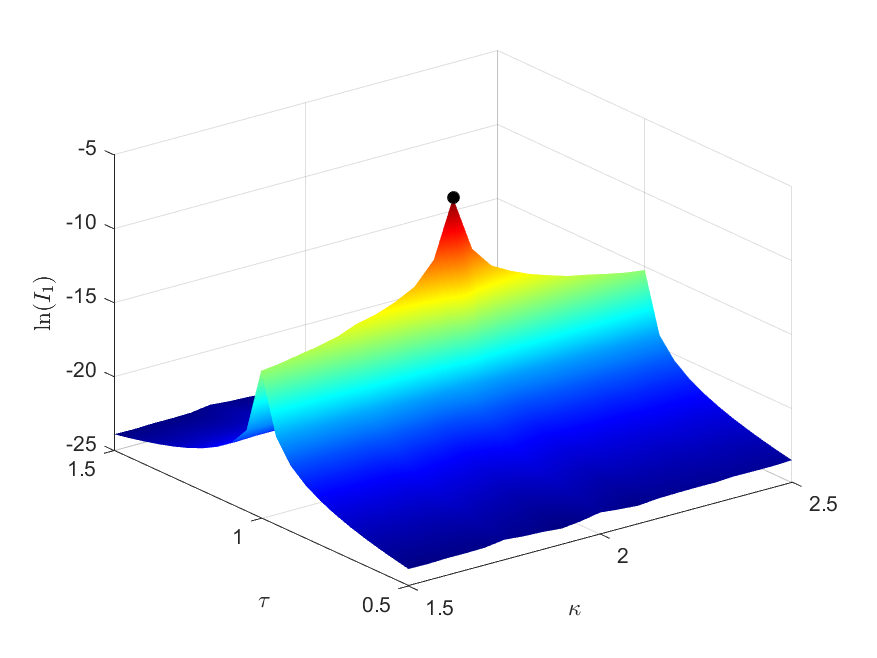}}
  \subfigure[$I_1(\tau_j,\kappa_\ell)$ with $\mho$]{
  \includegraphics[width=0.23\textwidth]{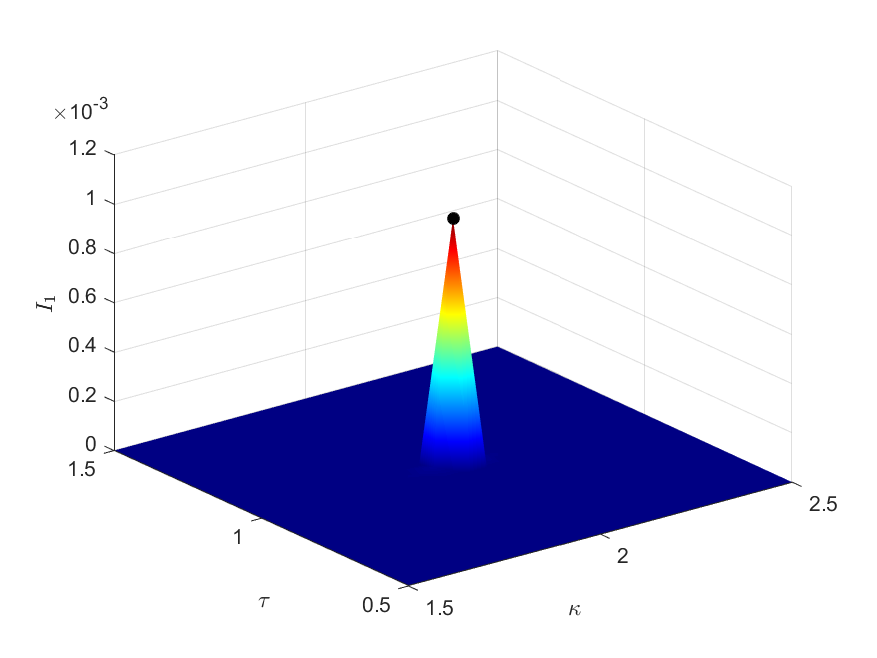}}
  \subfigure[$\ln(I_1(\tau_j,\kappa_\ell))$ with $\mho$]{
  \includegraphics[width=0.23\textwidth]{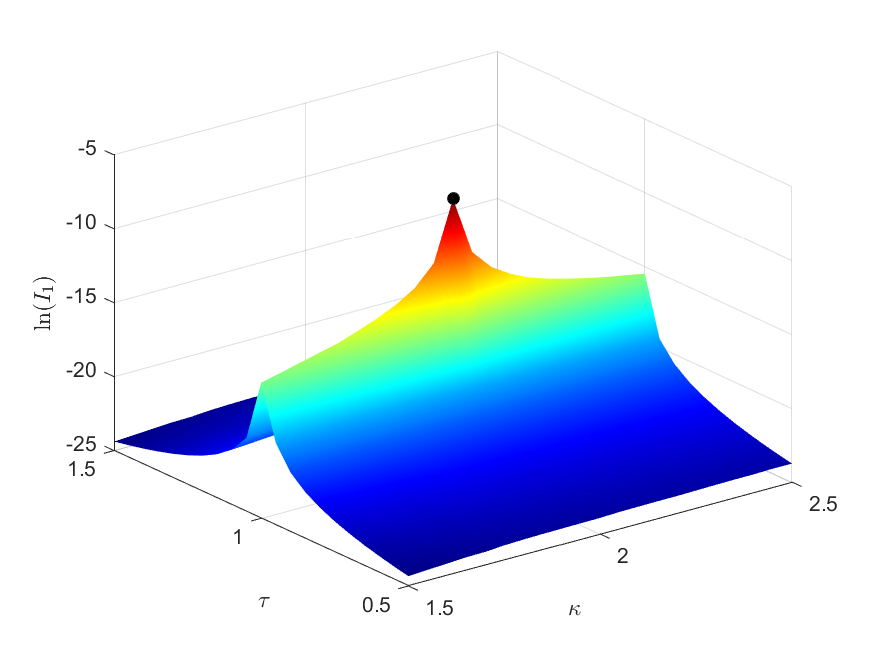}}
  \subfigure[$\widetilde I_1(\tau_j,\kappa_\ell)$ with $\Om$]{
  \includegraphics[width=0.23\textwidth]{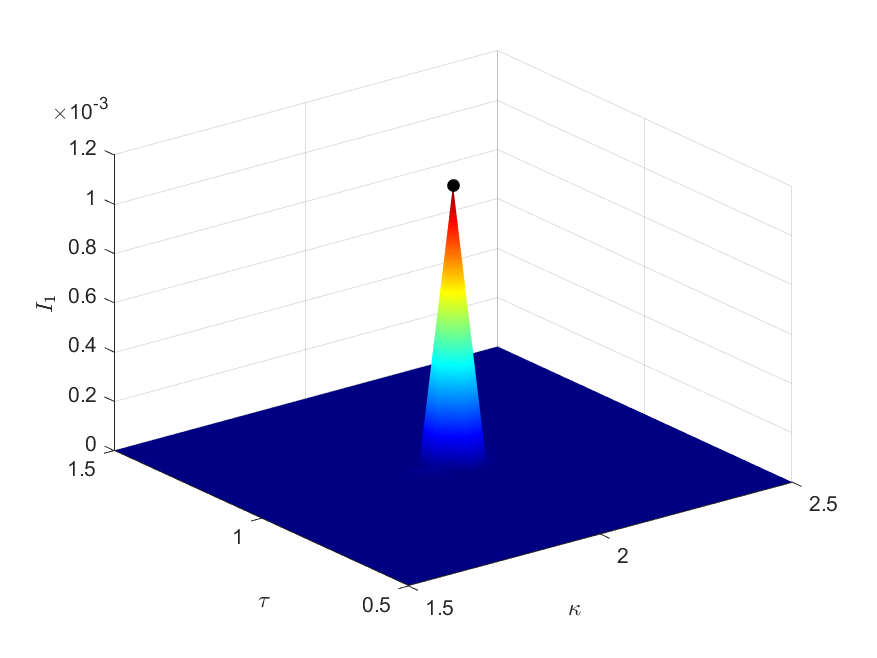}}
  \subfigure[$\ln(\widetilde I_1(\tau_j,\kappa_\ell))$ with $\Om$]{
  \includegraphics[width=0.23\textwidth]{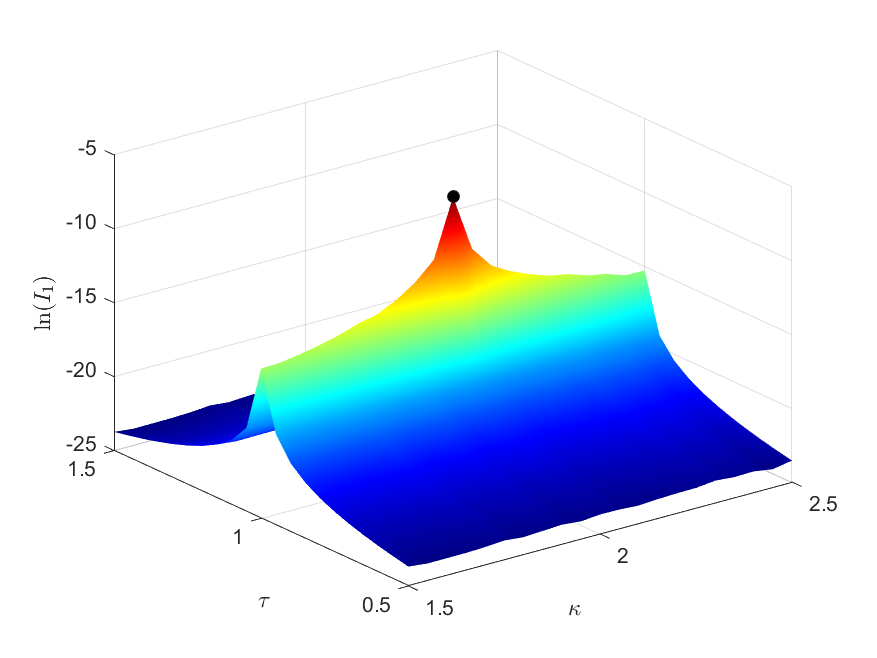}}
  \subfigure[$\widetilde I_1(\tau_j,\kappa_\ell)$ with $\mho$]{
  \includegraphics[width=0.23\textwidth]{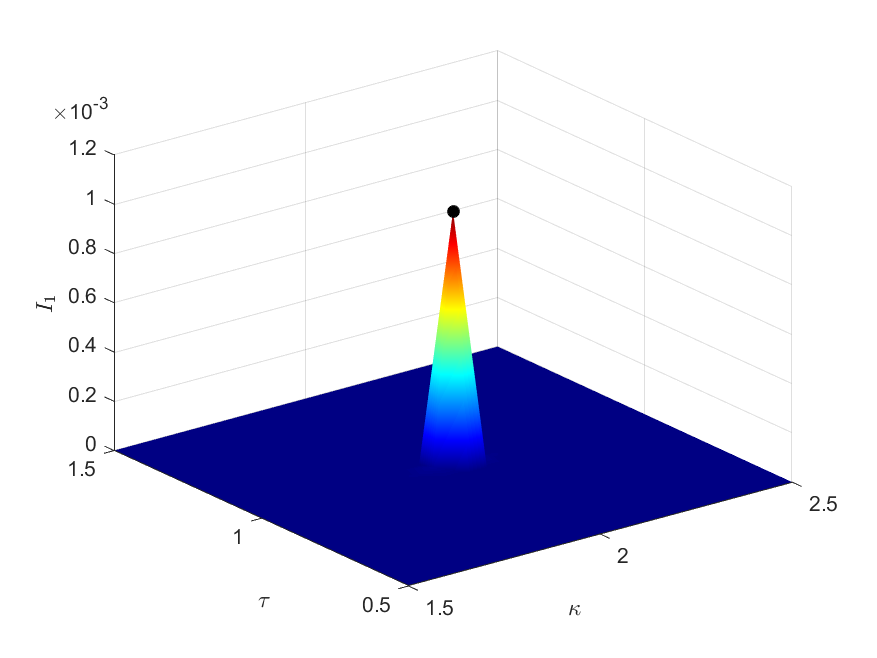}}
  \subfigure[$\ln(\widetilde I_1(\tau_j,\kappa_\ell))$ with $\mho$]{
  \includegraphics[width=0.23\textwidth]{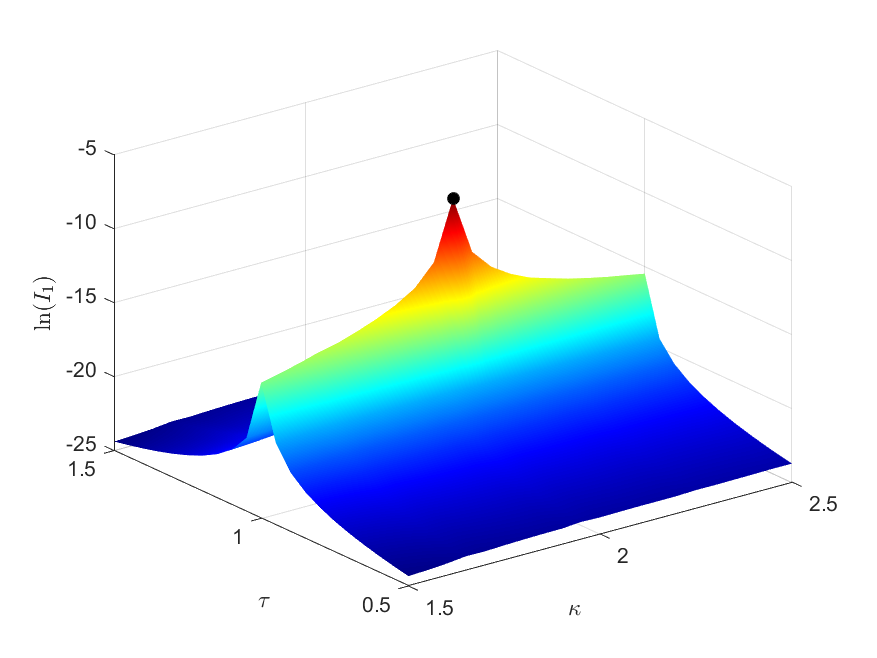}}
  \caption{Numerical results for Example \ref{ex1complex} to simultaneously reconstruct two coefficients $\sigma=1$ and $k=2$ ($q=4$).
  $(\lambda_n,f_n)$ in \eqref{ind+} is the eigensystem of $[A(\kappa^2,\Om)-A_0(\kappa^2)]_\#$ in (a), (b) and of $[A(\kappa^2,\mho)-A_0(\kappa^2)]_\#$ in (c), (d).
  $(\lambda_n,f_n)$ in \eqref{ind+tilde} is the eigensystem of $[A(\kappa^2,\Om)-A(\kappa^2,\widetilde\Om)]_\#$ in (e), (f) and of $[A(\kappa^2,\mho)-A(\kappa^2,\widetilde\mho)]_\#$ in (g), (h).
  $\tau_j=j/20+0.5$ and $\kappa_\ell=\ell/20+1.5$, $j,\ell=0,1\cdots,20$.
  The black dot represents the true value of $(\tau,k)$.}\label{ex_1simultaneous}
\end{figure}

\subsection{Numerical examples for recovering the polygonal obstacle}

We are now ready to recover a rough location and the convex hull of the inclusion $D$.
In this subsection, we assume the constant coefficients $\sigma$ and $q$ have been recovered in the previous subsection.
In view of Theorem \ref{thm211027}, Remark \ref{rem1029-2} and the Picard's theorem (see \cite[Theorem 4.8]{CK19}), we defined the following indicator functions to recover $D$ from the Cauchy data $(f,\pa_\nu u|_{\pa B})$:
\be\label{ind}
I_2(\Omega):=\left[\sum_{n=1}^\infty\frac{|(p(\sigma,q/\sigma),f_n)|^2}{|\lambda_n|}\right]^{-1},\\ \label{indtilde}
\widetilde I_2(\Omega):=\left[\sum_{n=1}^\infty\frac{|(\tilde p(\sigma,q/\sigma),\tilde f_n)|^2}{|\tilde \lambda_n|}\right]^{-1},
\en
where $\Omega$ is a convex Lipschitz domain that contained in $B$, $(\cdot,\cdot)$ denotes the inner product in $L^2(\pa B)$, $(\lambda_n,f_n)$ and $(\tilde\lambda_n,\tilde f_n)$ are the eigensystems of $[A(q/\sigma,\Omega)-A_0(q/\sigma)]_\#$ and $[A(q/\sigma,\Omega)-A(q/\sigma,\widetilde\Om)]_\#$, respectively.
$p(\sigma,q/\sigma)$ is defined as in Theorem \ref{thm-1} with $f$ satisfying the condition in Theorem \ref{53-thm1}, and $\tilde p(\sigma,q/\sigma)$ is defined as in Theorem \ref{thm-2} with $f$ satisfying the condition in Theorem \ref{thm220605-3}.
To indicator the value of $I_2(\Om^{(\ell)})$ for each $\Om^{(\ell)}\!\in\!\mathcal O$, $\pa\Omega^{(\ell)}$ will be plotted in the color given in terms of RGB values in $[0,1]\times[0,1]\times[0,1]$ as follows:
\be\label{1029-3}
\begin{cases}
(V^{(\ell)},1-V^{(\ell)},0) & \text{if}\;V^{(\ell)}\ge0,\\
(0,1+V^{(\ell)},-V^{(\ell)}) & \text{if}\;V^{(\ell)}<0,
\end{cases}
\quad\text{ with }
V^{(\ell)}:=2\times\frac{I_2(\Om^{(\ell)})-I_{min}}{I_{max}-I_{min}}-1,
\en
where $I_{max}\!:=\!\max_{\ell}\{I_2(\Om^{(\ell)})\}$ and $I_{min}\!:=\!\min_{\ell}\{I_2(\Om^{(\ell)})\}$.

\begin{remark}\label{rem220704}
In view of Theorem \ref{thm3.8m_ID} and Remark \ref{rem1029-2} (i), we can define indicator functions similar to \eqref{ind} and \eqref{indtilde} with obstacle $\Om$ replaced by the support $\ov{\mho}$ of a medium.
\end{remark}

\begin{example}\label{Ex2}
Let $B$ and $D$ be the same as in Example \ref{ex1complex}, and the coefficient $\sigma=1$ is known.
Let $\Om_P^{(\ell)}$ be a disk centered at $P$ with radius $\ell/10$ and corresponding $\widetilde\Om_P^{(\ell)}$ be a disk centered at $P$ with radius $\ell/20$.
The boundary condition on $\pa\Om_P^{(\ell)}$ is $\pa_\nu u+iku=0$, while the boundary condition on $\pa\widetilde\Om$ is $\pa_\nu u-iku=0$.
Set $\widehat B_P^{(\ell)}=\widetilde\Om_P^{(\ell)}$ for each $\ell$.
The numerical results for $I_2(\Om_P^{(\ell)})$ defined by \eqref{ind}, $\ell=5,\cdots,30$, with different locations of $P$ and different values of $k$ are shown in Figure \ref{ex3}, while the numerical results for $\widetilde I_2(\Om_P^{(\ell)})$ defined by \eqref{indtilde}, $\ell=5,\cdots,30$, with different locations of $P$ and different values of $k$ are shown in Figure \ref{tilde_ex3}.
Similarly, let $\mho_P^{(\ell)}$ be a disk centered at $P$ with radius $\ell/10$ and corresponding $\widetilde\mho_P^{(\ell)}$ be a disk centered at $P$ with radius $\ell/20$.
The wave number inside $\mho_P^{(\ell)}$ is $k_0=k(2+i)$, while the wave number inside $\widetilde\mho$ is $k(2-i)$.
Set $\widehat B_P^{(\ell)}=\widetilde\Om_P^{(\ell)}$ for each $\ell$.
The numerical results for $I_2(\mho_P^{(\ell)})$ defined by \eqref{ind}, $\ell=5,\cdots,30$, with different locations of $P$ are shown in Figure \ref{exmedium3}, while the numerical results for $\widetilde I_2(\mho_P^{(\ell)})$ defined by \eqref{indtilde}, $\ell=5,\cdots,30$, with different locations of $P$ are shown in Figure \ref{tilde_exmedium3}.
\end{example}

\begin{figure}[htbp]
  \centering
  \subfigure[$P\!\!=\!\!(\!-\!1.5\!,\!0\!),\!k\!\!=\!\!0.5$]{
  \includegraphics[width=0.18\textwidth]{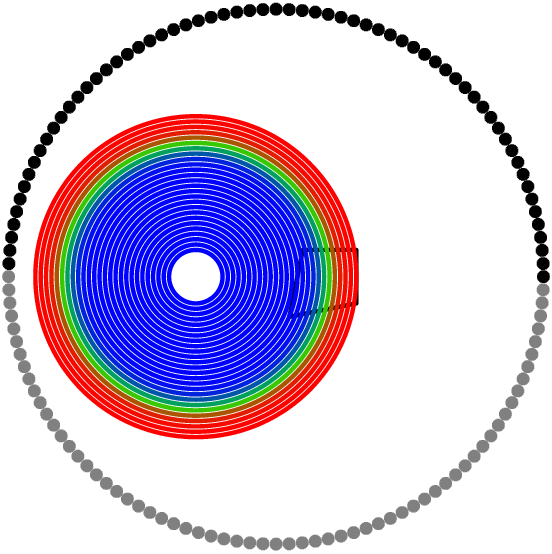}}
  \subfigure[$P\!\!=\!\!(-0.5,\!0),\!k\!=\!1$]{
  \includegraphics[width=0.18\textwidth]{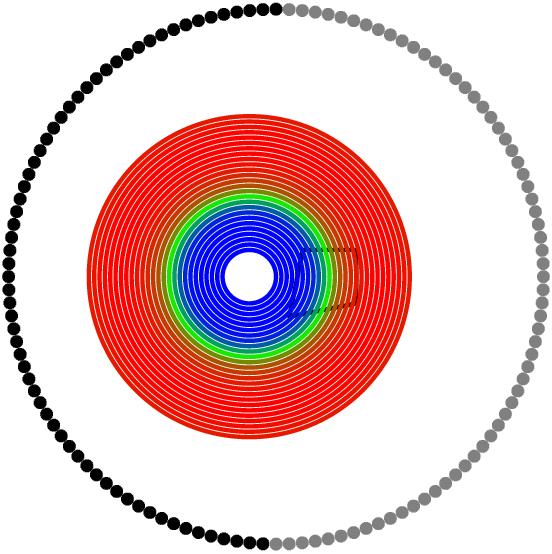}}
  \subfigure[$P=(0,0),k\!=\!2$]{
  \includegraphics[width=0.18\textwidth]{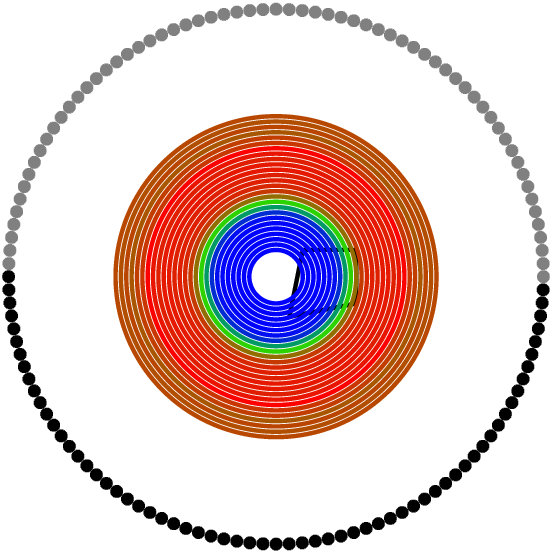}}
  \subfigure[$P\!=\!(0.5,0),k\!=\!4$]{
  \includegraphics[width=0.18\textwidth]{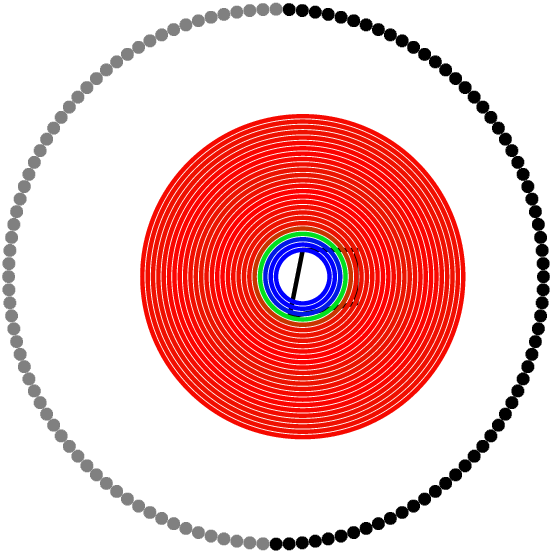}}
  \subfigure[$P\!=\!(1.5,0),k\!=\!8$]{
  \includegraphics[width=0.18\textwidth]{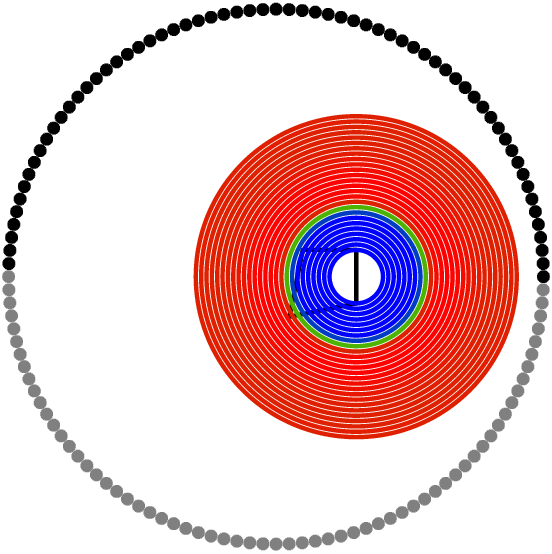}}\\
  \subfigure[$P\!\!=\!\!(\!-\!1.5\!,\!0\!),\!k\!\!=\!\!0.5$]{
  \includegraphics[width=0.18\textwidth]{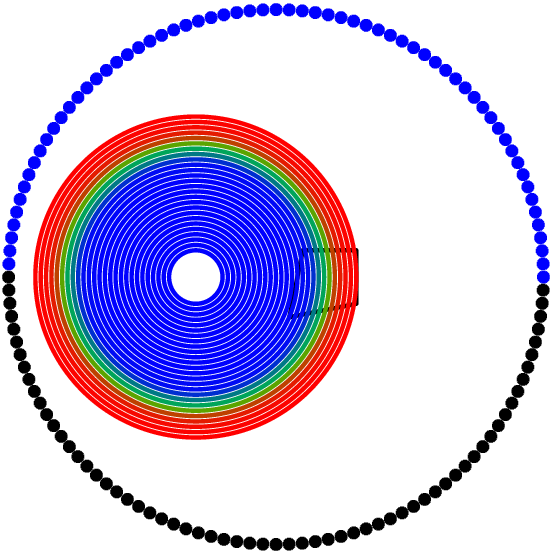}}
  \subfigure[$P\!\!=\!\!(-0.5,\!0),\!k\!=\!1$]{
  \includegraphics[width=0.18\textwidth]{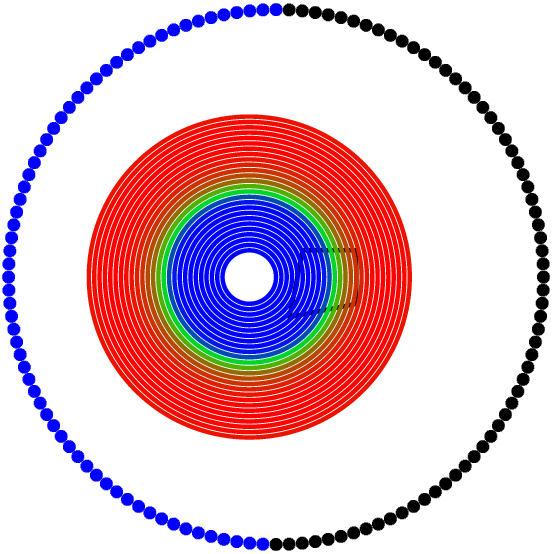}}
  \subfigure[$P=(0,0),k\!=\!2$]{
  \includegraphics[width=0.18\textwidth]{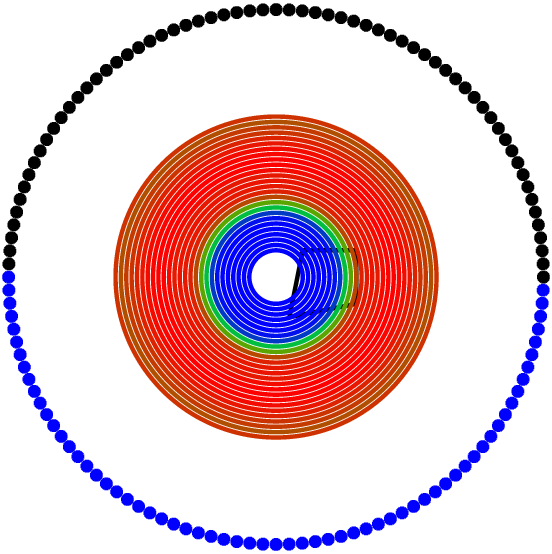}}
  \subfigure[$P\!=\!(0.5,0),k\!=\!4$]{
  \includegraphics[width=0.18\textwidth]{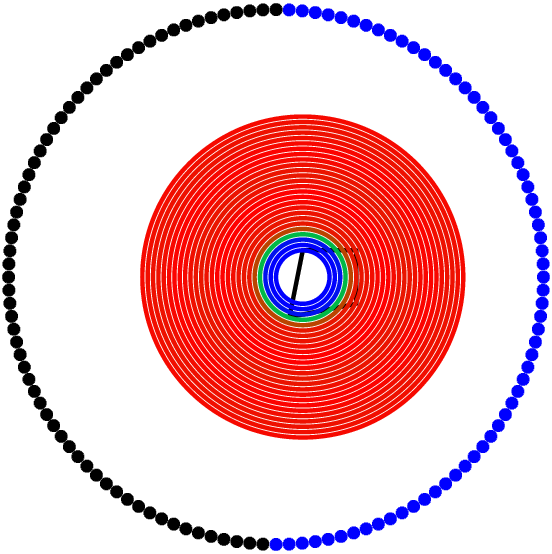}}
  \subfigure[$P\!=\!(1.5,0),k\!=\!8$]{
  \includegraphics[width=0.18\textwidth]{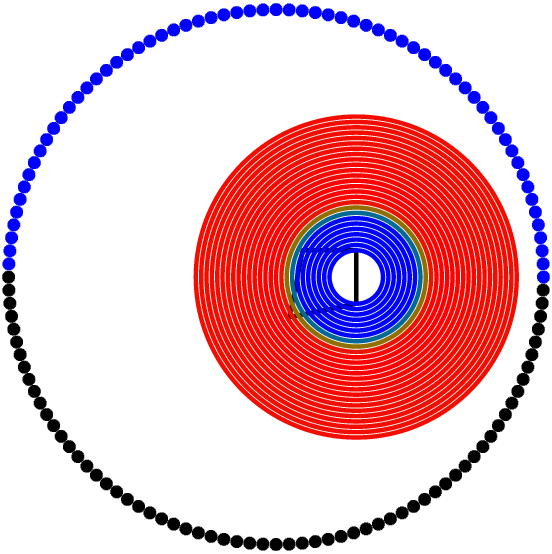}}
  \caption{Numerical results for Example \ref{Ex2}. The black line represents $\pa D$, the circle with knots represents $\pa B$. The boundary value $f$ takes value $0$, $1$, and $2$ at gray, black, and blue knots on $\pa B$, respectively. The colored circles represent $\pa\Om_P^{(\ell)}$ for different $P$ and $\ell$ with its color indicating the value of $I_2(\Om_P^{(\ell)})$ in the sense of (\ref{1029-3}).}\label{ex3}
\end{figure}

\begin{figure}[htbp]
  \centering
  \subfigure[$P\!\!=\!\!(\!-\!1.5\!,\!0\!),\!k\!\!=\!\!0.5$]{
  \includegraphics[width=0.18\textwidth]{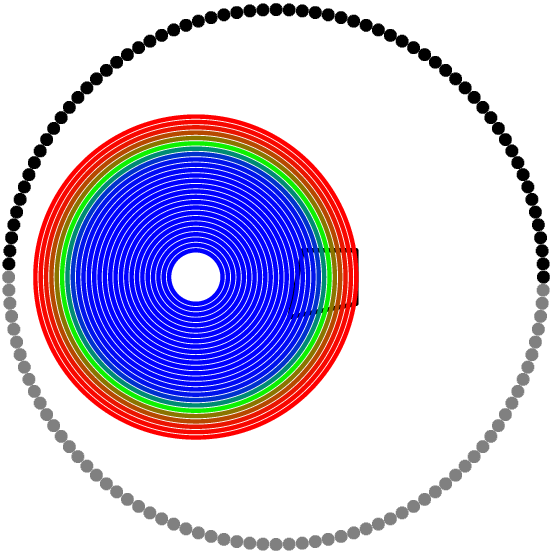}}
  \subfigure[$P\!\!=\!\!(-0.5,\!0),\!k\!=\!1$]{
  \includegraphics[width=0.18\textwidth]{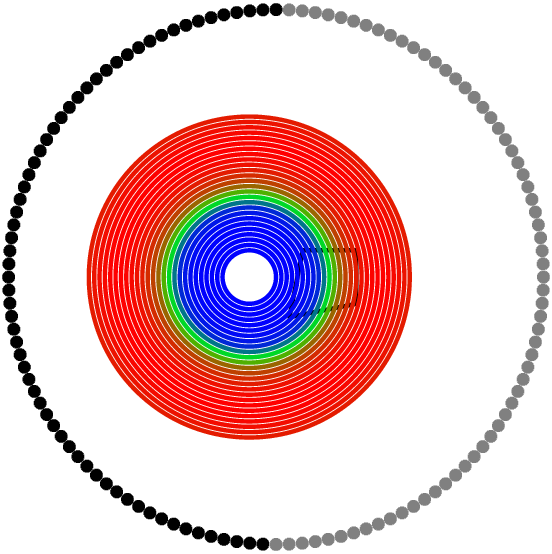}}
  \subfigure[$P=(0,0),k\!=\!2$]{
  \includegraphics[width=0.18\textwidth]{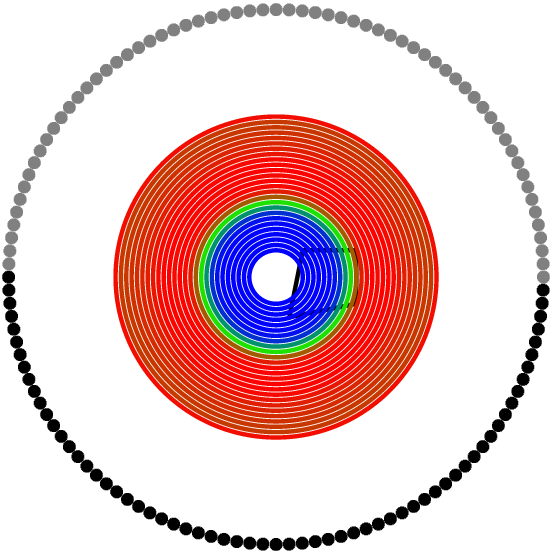}}
  \subfigure[$P\!=\!(0.5,0),k\!=\!4$]{
  \includegraphics[width=0.18\textwidth]{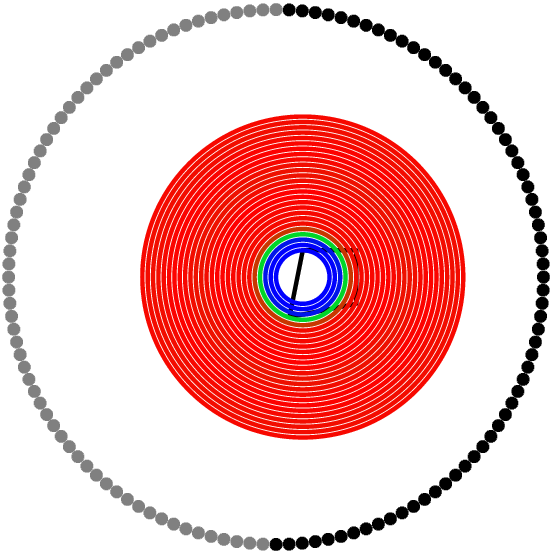}}
  \subfigure[$P\!=\!(1.5,0),k\!=\!8$]{
  \includegraphics[width=0.18\textwidth]{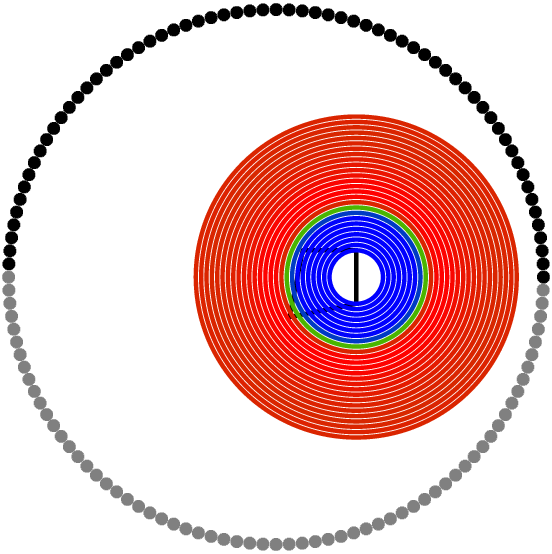}}\\
  \subfigure[$P\!\!=\!\!(\!-\!1.5\!,\!0\!),\!k\!\!=\!\!0.5$]{
  \includegraphics[width=0.18\textwidth]{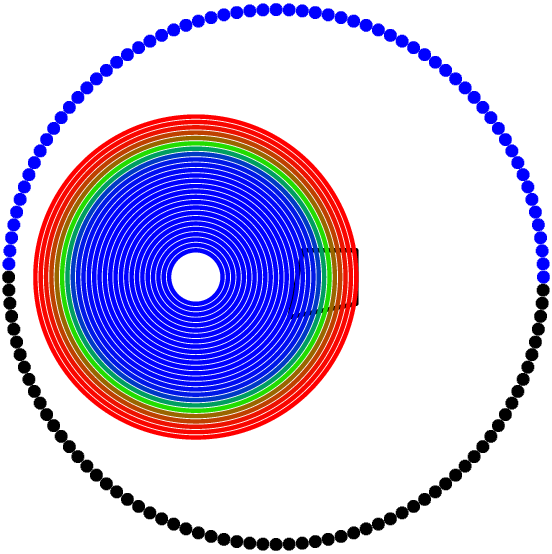}}
  \subfigure[$P\!\!=\!\!(-0.5,\!0),\!k\!=\!1$]{
  \includegraphics[width=0.18\textwidth]{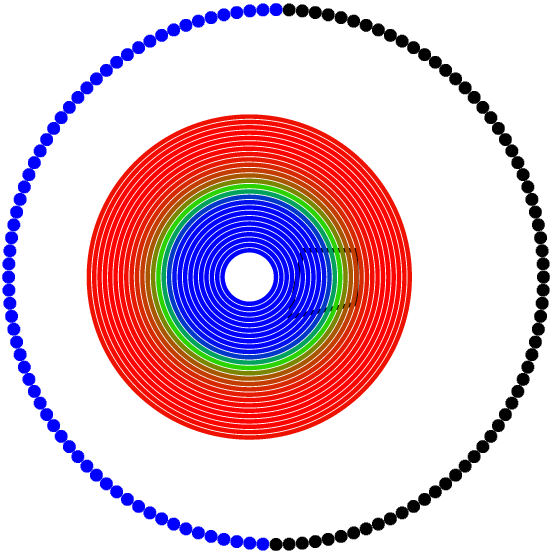}}
  \subfigure[$P=(0,0),k\!=\!2$]{
  \includegraphics[width=0.18\textwidth]{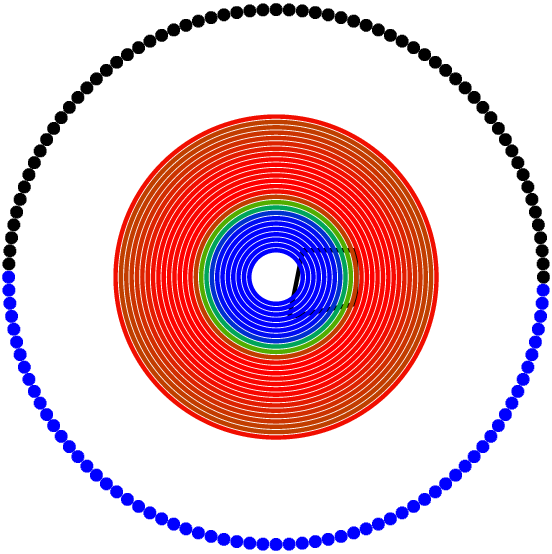}}
  \subfigure[$P\!=\!(0.5,0),k\!=\!4$]{
  \includegraphics[width=0.18\textwidth]{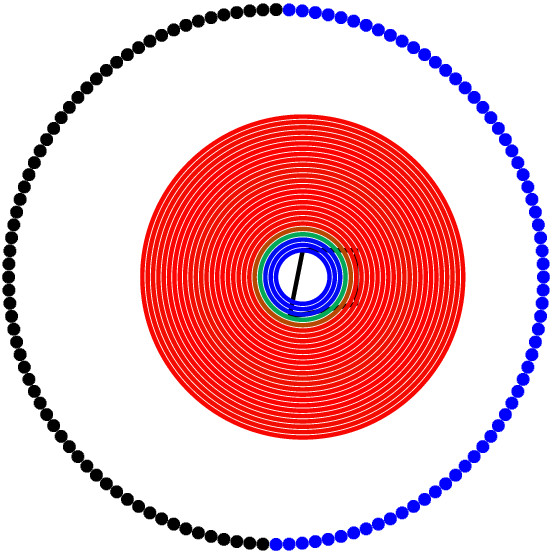}}
  \subfigure[$P\!=\!(1.5,0),k\!=\!8$]{
  \includegraphics[width=0.18\textwidth]{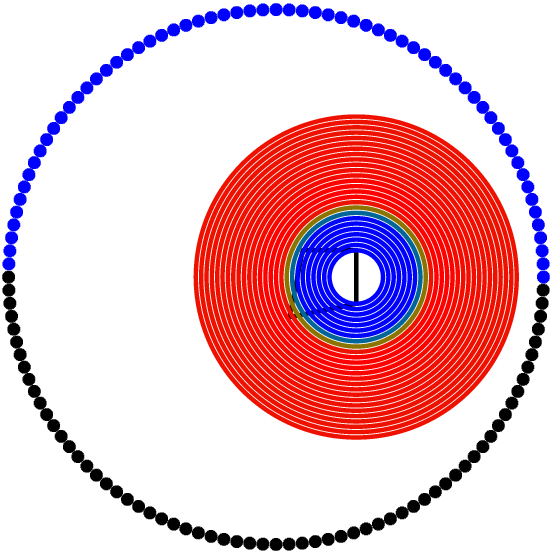}}
  \caption{Numerical results for Example \ref{Ex2}. The black line represents $\pa D$, the circle with knots represents $\pa B$. The boundary value $f$ takes value $0$, $1$, and $2$ at gray, black, and blue knots on $\pa B$, respectively. The colored circles represent $\pa\Om_P^{(\ell)}$ for different $P$ and $\ell$ with its color indicating the value of $\widetilde I_2(\Om_P^{(\ell)})$ in the sense of (\ref{1029-3}).}\label{tilde_ex3}
\end{figure}

\begin{figure}[htbp]
  \centering
  \subfigure[$P\!\!=\!\!(\!-\!1.5\!,\!0\!),\!k\!\!=\!\!0.5$]{
  \includegraphics[width=0.18\textwidth]{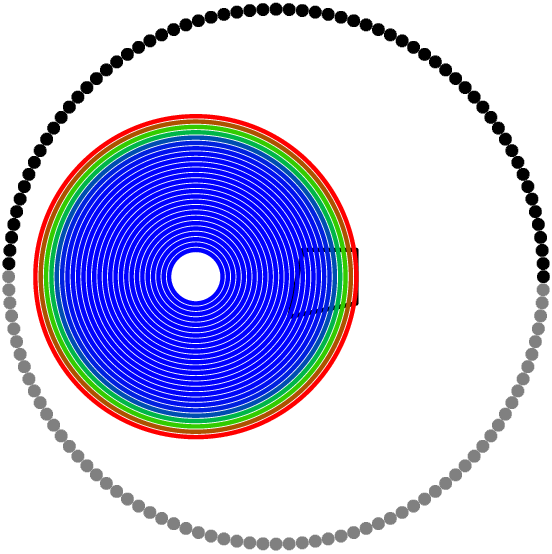}}
  \subfigure[$P\!\!=\!\!(-0.5,\!0),\!k\!=\!1$]{
  \includegraphics[width=0.18\textwidth]{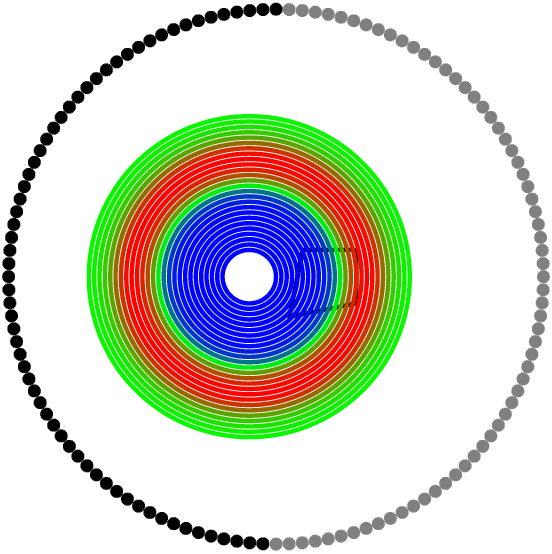}}
  \subfigure[$P=(0,0),k\!=\!2$]{
  \includegraphics[width=0.18\textwidth]{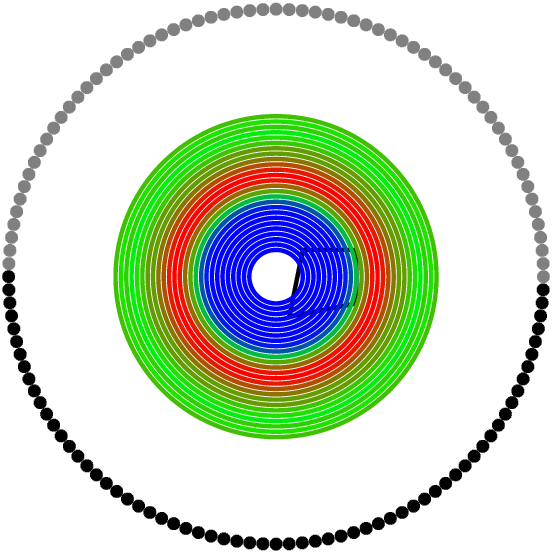}}
  \subfigure[$P\!=\!(0.5,0),k\!=\!4$]{
  \includegraphics[width=0.18\textwidth]{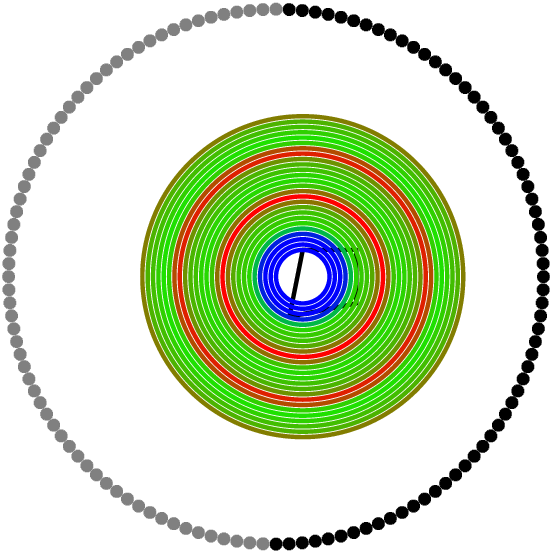}}
  \subfigure[$P\!=\!(1.5,0),k\!=\!6$]{
  \includegraphics[width=0.18\textwidth]{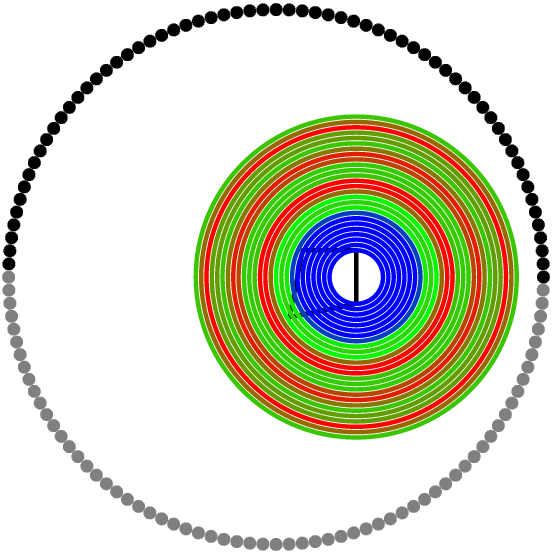}}\\
  \subfigure[$P\!\!=\!\!(\!-\!1.5\!,\!0\!),\!k\!\!=\!\!0.5$]{
  \includegraphics[width=0.18\textwidth]{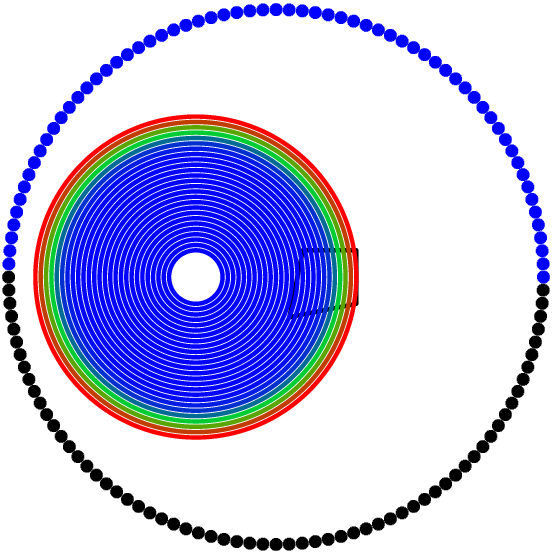}}
  \subfigure[$P\!\!=\!\!(-0.5,\!0),\!k\!=\!1$]{
  \includegraphics[width=0.18\textwidth]{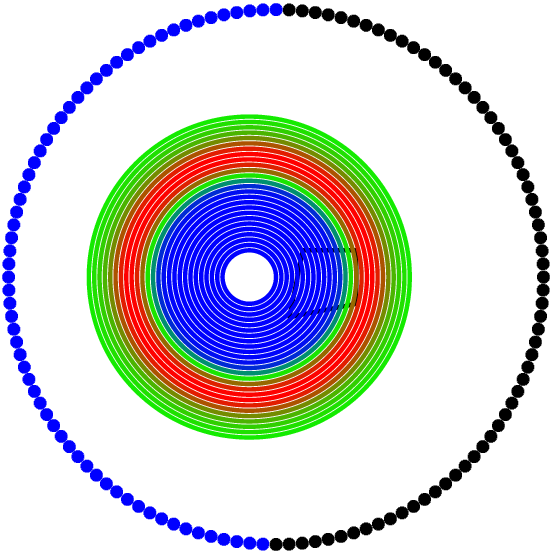}}
  \subfigure[$P=(0,0),k\!=\!2$]{
  \includegraphics[width=0.18\textwidth]{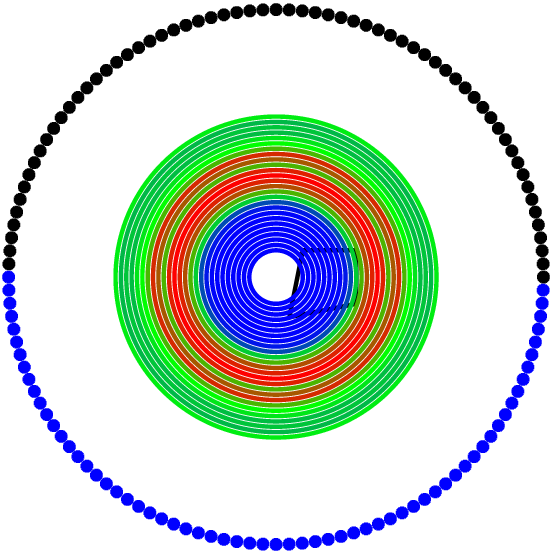}}
  \subfigure[$P\!=\!(0.5,0),k\!=\!4$]{
  \includegraphics[width=0.18\textwidth]{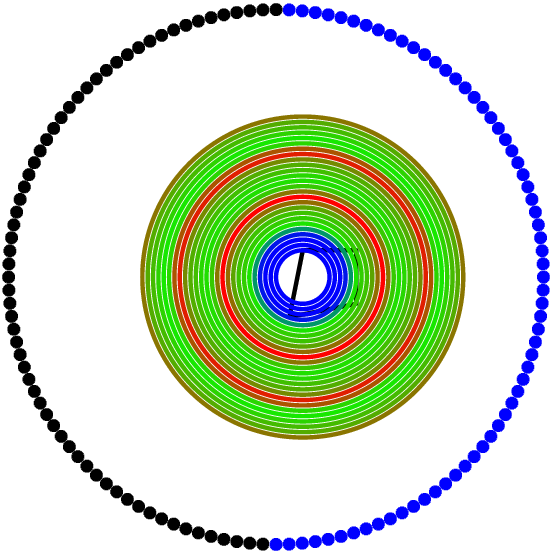}}
  \subfigure[$P\!=\!(1.5,0),k\!=\!6$]{
  \includegraphics[width=0.18\textwidth]{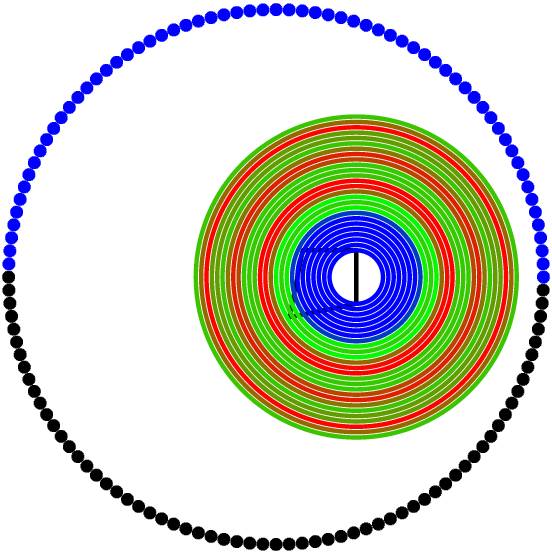}}
  \caption{Numerical results for Example \ref{Ex2}. The black line represents $\pa D$, the circle with knots represents $\pa B$. The boundary value $f$ takes value $0$, $1$, and $2$ at gray, black, and blue knots on $\pa B$, respectively. The colored circles represent $\pa\mho_P^{(\ell)}$ for different $P$ and $\ell$ with its color indicating the value of $I_2(\mho_P^{(\ell)})$ in the sense of (\ref{1029-3}).}\label{exmedium3}
\end{figure}

\begin{figure}[htbp]
  \centering
  \subfigure[$P\!\!=\!\!(\!-\!1.5\!,\!0\!),\!k\!\!=\!\!0.5$]{
  \includegraphics[width=0.18\textwidth]{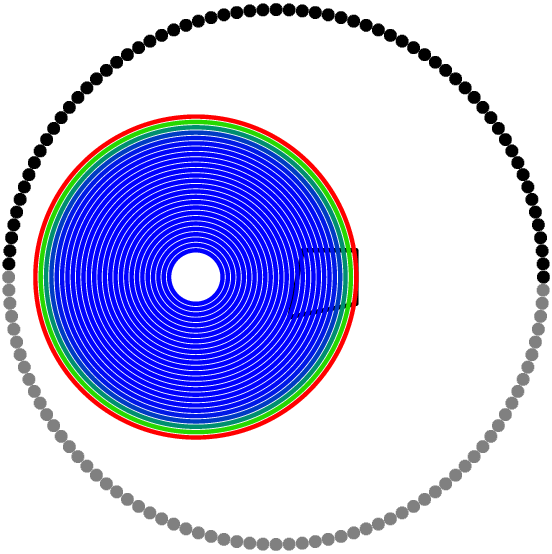}}
  \subfigure[$P\!\!=\!\!(-0.5,\!0),\!k\!=\!1$]{
  \includegraphics[width=0.18\textwidth]{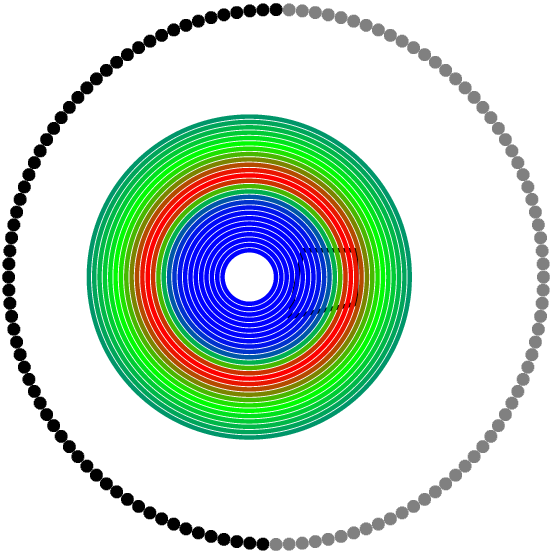}}
  \subfigure[$P=(0,0),k\!=\!2$]{
  \includegraphics[width=0.18\textwidth]{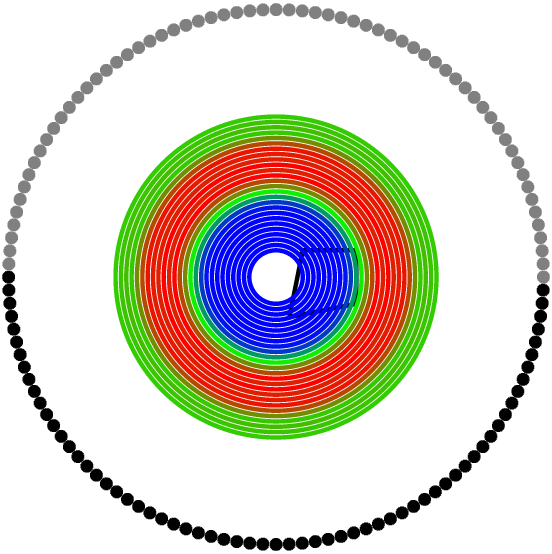}}
  \subfigure[$P\!=\!(0.5,0),k\!=\!4$]{
  \includegraphics[width=0.18\textwidth]{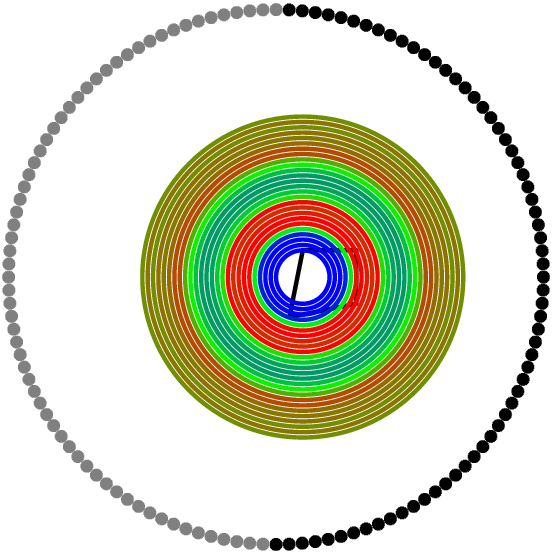}}
  \subfigure[$P\!=\!(1.5,0),k\!=\!6$]{
  \includegraphics[width=0.18\textwidth]{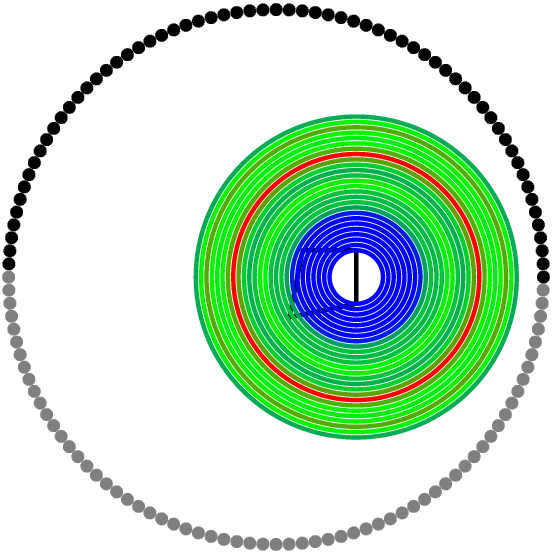}}\\
  \subfigure[$P\!\!=\!\!(\!-\!1.5\!,\!0\!),\!k\!\!=\!\!0.5$]{
  \includegraphics[width=0.18\textwidth]{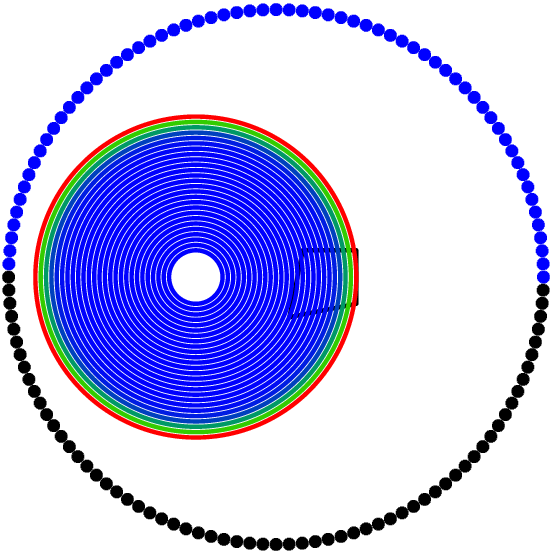}}
  \subfigure[$P\!\!=\!\!(-0.5,\!0),\!k\!=\!1$]{
  \includegraphics[width=0.18\textwidth]{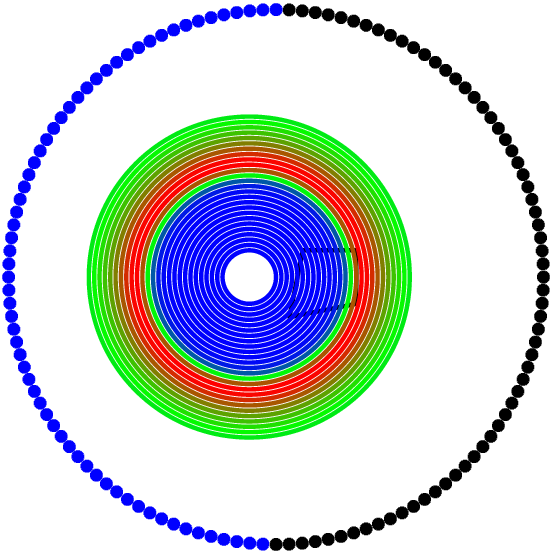}}
  \subfigure[$P=(0,0),k\!=\!2$]{
  \includegraphics[width=0.18\textwidth]{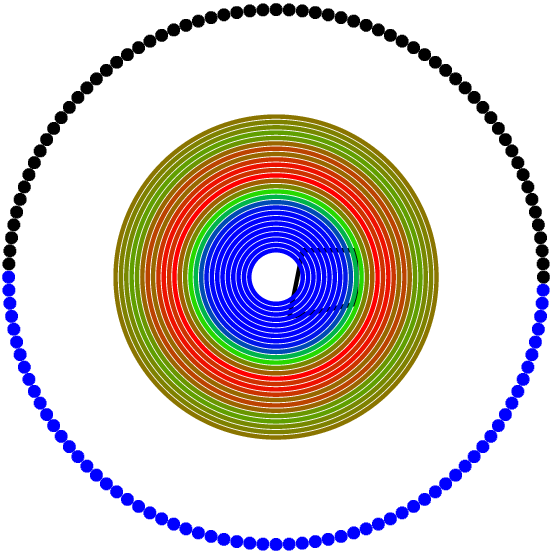}}
  \subfigure[$P\!=\!(0.5,0),k\!=\!4$]{
  \includegraphics[width=0.18\textwidth]{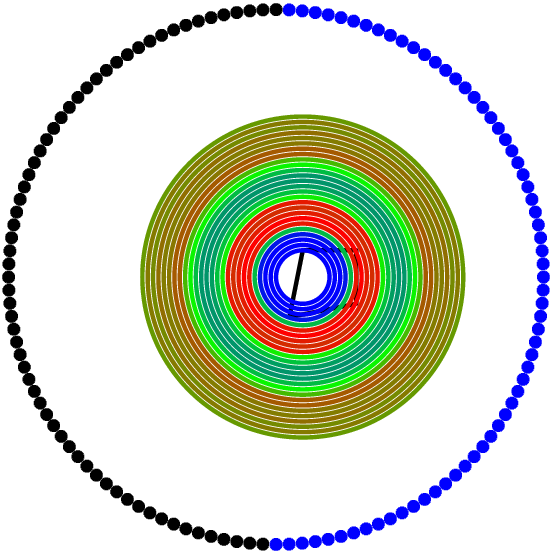}}
  \subfigure[$P\!=\!(1.5,0),k\!=\!6$]{
  \includegraphics[width=0.18\textwidth]{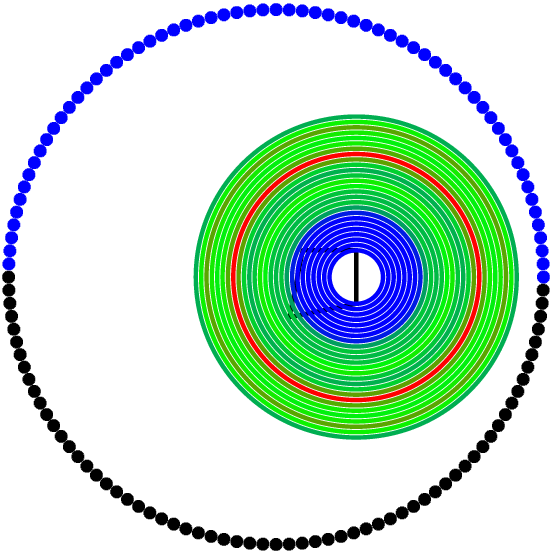}}
  \caption{Numerical results for Example \ref{Ex2}. The black line represents $\pa D$, the circle with knots represents $\pa B$. The boundary value $f$ takes value $0$, $1$, and $2$ at gray, black, and blue knots on $\pa B$, respectively. The colored circles represent $\pa\mho_P^{(\ell)}$ for different $P$ and $\ell$ with its color indicating the value of $\widetilde I_2(\mho_P^{(\ell)})$ in the sense of (\ref{1029-3}).}\label{tilde_exmedium3}
\end{figure}

From the numerical results shown in Figures \ref{ex3}--\ref{tilde_exmedium3}, we see that the rough location of $D$ can be reconstructed from a single pair of Cauchy data to (\ref{1})--(\ref{3}).
Moreover, we can imagine that Remark \ref{rem1029-2} can be numerically verified if the values of $I_2(\Om)$ for all possible domains $\Om$ in $\mathcal O$ are calculated.
For the reconstruction of the convex hull of the target shape, we refer to \cite{XH2024}.

\begin{example}\label{Ex4}
Let $D$ and $B$ be the same as in Example \ref{ex1complex}, and the coefficients $\sigma=1$ and $q=4$ ($k=2$) are known.
We compare the values of $I_2(\Om_{P_j}(r))$ defined by \eqref{ind} and $\widetilde I_2(\Om_{P_j}(r))$ defined by \eqref{indtilde} where $\Om_{P_j}(r)$ is a disk with radius $r\in\{1,1/2,1/4,1/8\}$ centered at $P_j:=(2rj_1-2,2rj_2-2)$ for $j=(j_1,j_2)\in\{0,1,\cdots,2/r\}^2$, and $\widetilde\Om_{P_j}(r/2)$ is a disk with radius $r/2$ centered at $P_j$.
The boundary condition on $\pa\Om_{P_j}(r)$ is $\pa_\nu u+iku=0$, while the boundary condition on $\pa\widetilde\Om_{P_j}(r/2)$ is $\pa_\nu u-iku=0$ for each $j$ and $r$.
Set $\widehat B=\widetilde\Om_{P_j}(r/2)$ for each $j$ and $r$.
The numerical results for $I_2(\Om_{P_j}(r))$ and $\widetilde I_2(\Om_{P_j}(r))$ are shown in Figure \ref{ex4} and Figure \ref{tilde_ex4}, respectively.
Similarly, the numerical results for $I_2(\mho_{P_j}(r))$ defined by \eqref{ind} and $\widetilde I_2(\mho_{P_j}(r))$ defined by \eqref{indtilde} with $\ov{\mho_{P_j}(r)}$ denoting the support of an medium with interior wave number $k(2+i)$ are shown in Figure \ref{exmedium4} and Figure \ref{tilde_exmedium4}, respectively, where $\mho_{P_j}(r)$ is also a disk with radius $r\in\{1,1/2,1/4,1/8\}$ centered at $P_j:=(2rj_1-2,2rj_2-2)$ for $j=(j_1,j_2)\in\{0,1,\cdots,2/r\}^2$, and $\widetilde\mho_{P_j}(r/2)$ is a disk with radius $r/2$ centered at $P_j$, the wave number inside $\widetilde\mho_{P_j}(r/2)$ is $k(2-i)$, $\widehat B=\widetilde\mho_{P_j}(r/2)$ for each $j$ and $r$.
\end{example}

\begin{figure}[htbp]
  \centering
  \subfigure[$r=1$]{
  \includegraphics[width=0.18\textwidth]{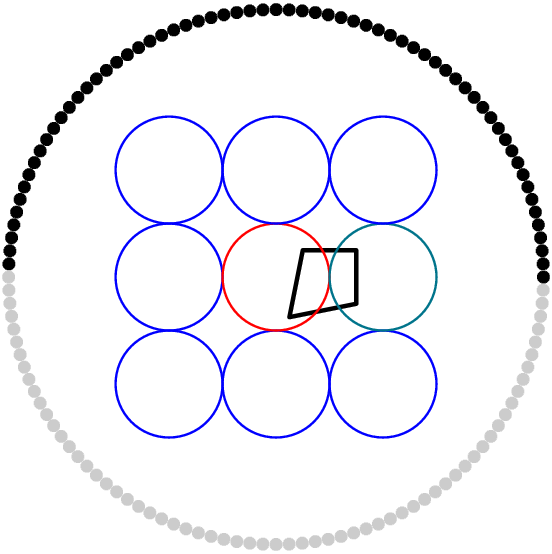}}
  \subfigure[$r=1/2$]{
  \includegraphics[width=0.18\textwidth]{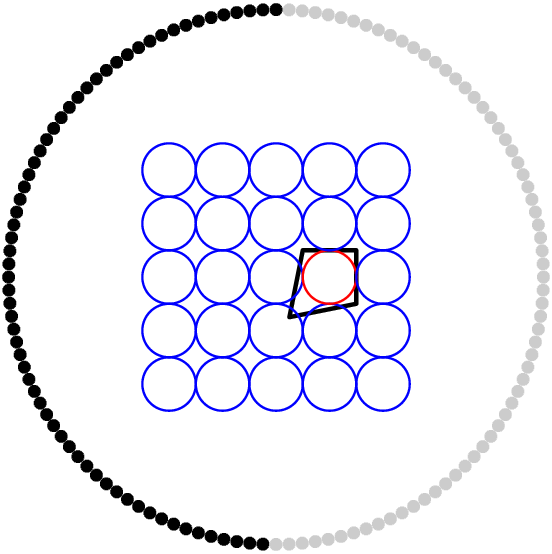}}
  \subfigure[$r=1/4$]{
  \includegraphics[width=0.18\textwidth]{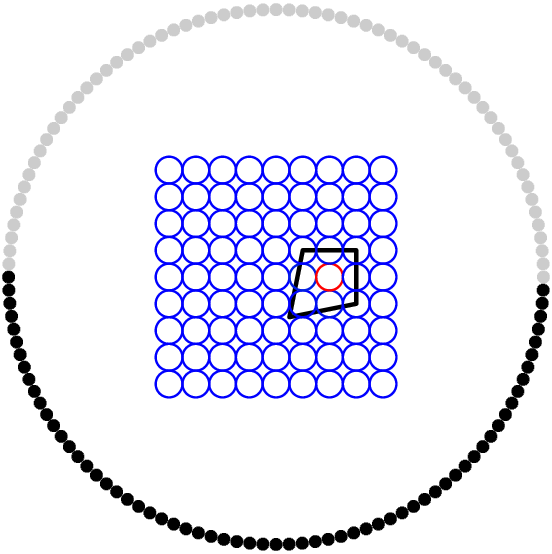}}
  \subfigure[$r=1/4$]{
  \includegraphics[width=0.18\textwidth]{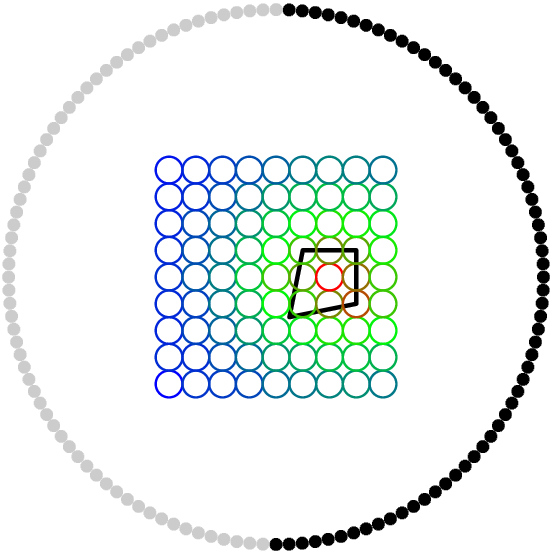}}
  \subfigure[$r=1/8$]{
  \includegraphics[width=0.18\textwidth]{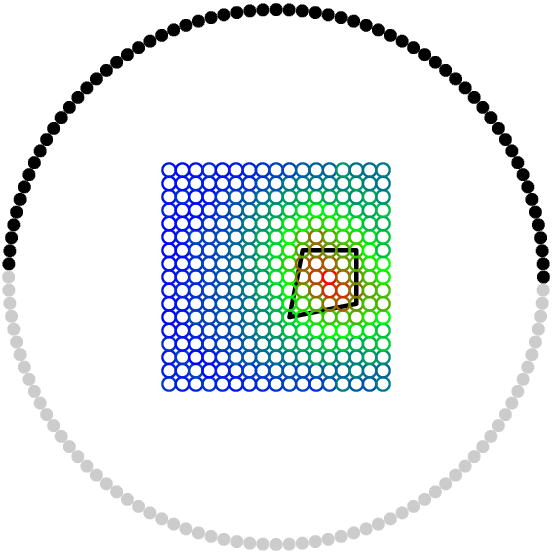}}\\
  \subfigure[$r=1$]{
  \includegraphics[width=0.18\textwidth]{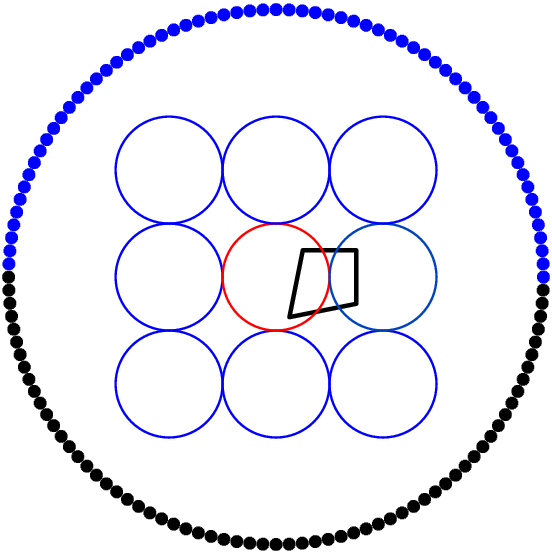}}
  \subfigure[$r=1/2$]{
  \includegraphics[width=0.18\textwidth]{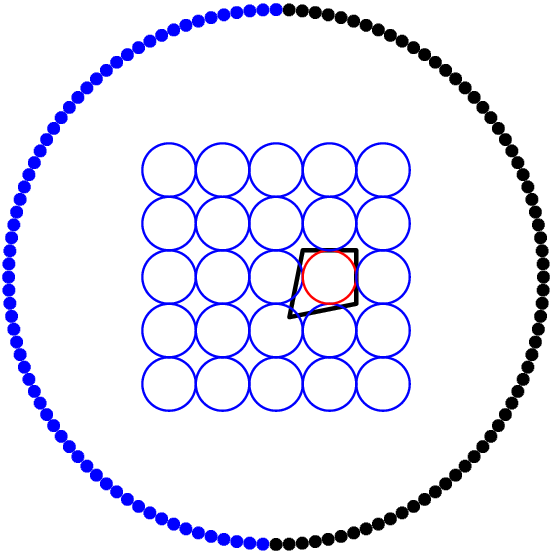}}
  \subfigure[$r=1/4$]{
  \includegraphics[width=0.18\textwidth]{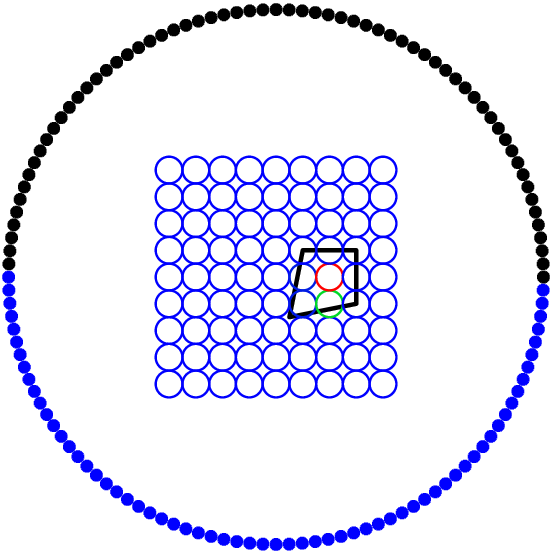}}
  \subfigure[$r=1/4$]{
  \includegraphics[width=0.18\textwidth]{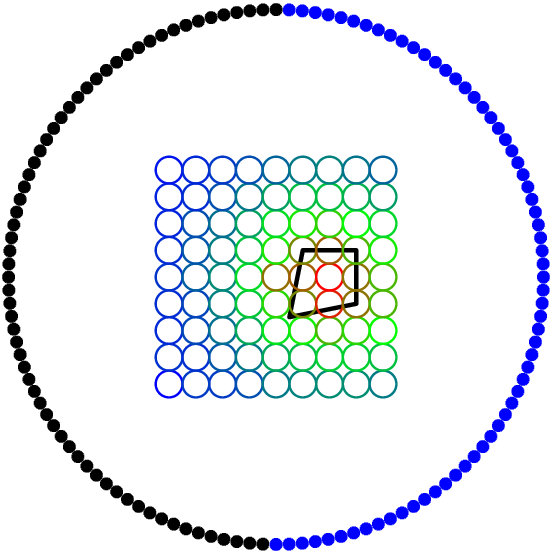}}
  \subfigure[$r=1/8$]{
  \includegraphics[width=0.18\textwidth]{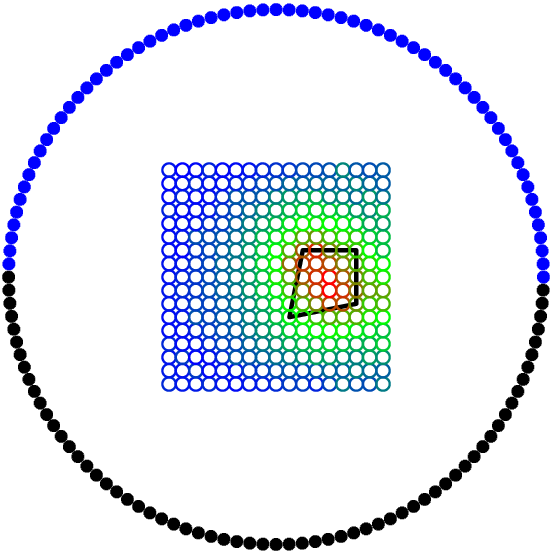}}
  \caption{Numerical results for Example \ref{Ex4}.
  The black line represents $\pa D$, the circle with knots represents $\pa B$.
  The boundary value $f$ takes value $0$, $1$, and $2$ at gray, black, and blue knots on $\pa B$, respectively.
  The colored circles represent $\pa\Om_P^{(\ell)}$ for different $P$ and $\ell$ with its color indicating the value of $I_2(\Om_P^{(\ell)})$ for (a)--(c) and (f)--(h), and $\ln(I_2(\Om_P^{(\ell)}))$ for (d), (e), (i) and (j), respectively, in the sense of (\ref{1029-3}).}\label{ex4}
\end{figure}

\begin{figure}[htbp]
  \centering
  \subfigure[$r=1$]{
  \includegraphics[width=0.18\textwidth]{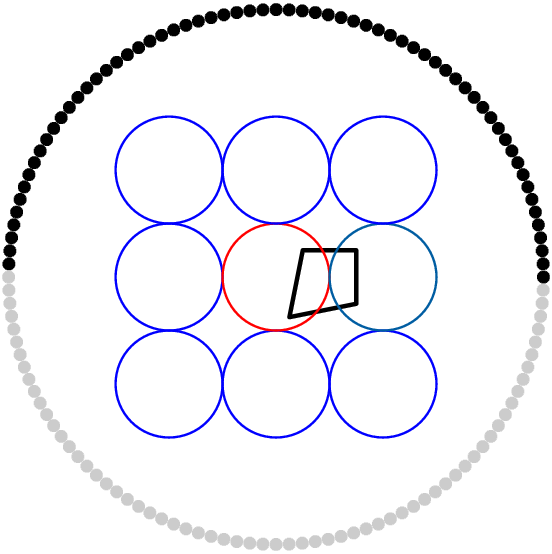}}
  \subfigure[$r=1/2$]{
  \includegraphics[width=0.18\textwidth]{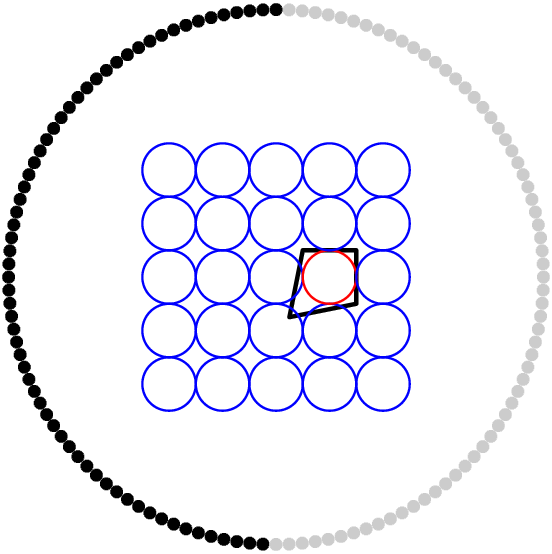}}
  \subfigure[$r=1/4$]{
  \includegraphics[width=0.18\textwidth]{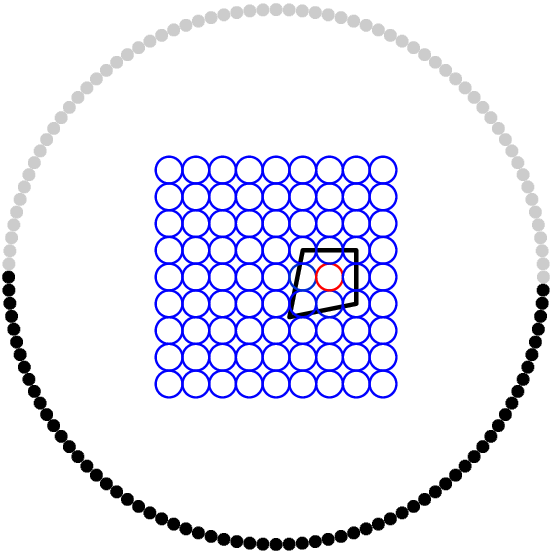}}
  \subfigure[$r=1/4$]{
  \includegraphics[width=0.18\textwidth]{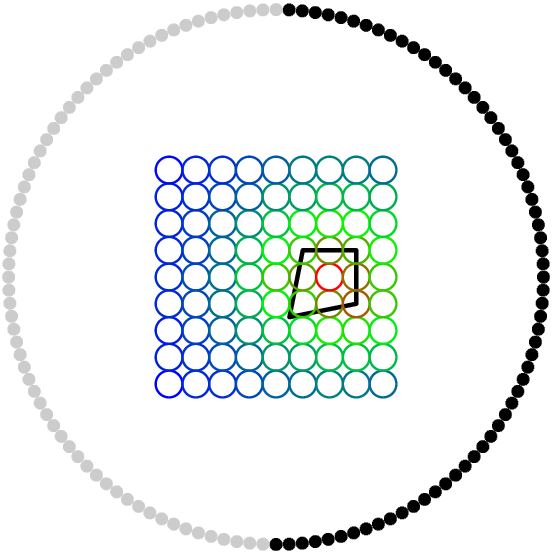}}
  \subfigure[$r=1/8$]{
  \includegraphics[width=0.18\textwidth]{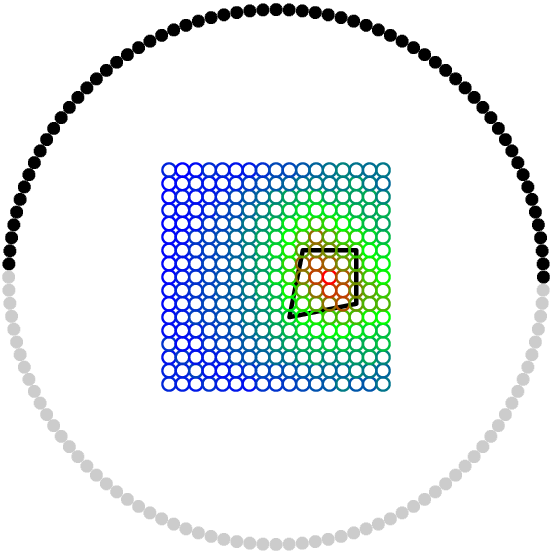}}\\
  \subfigure[$r=1$]{
  \includegraphics[width=0.18\textwidth]{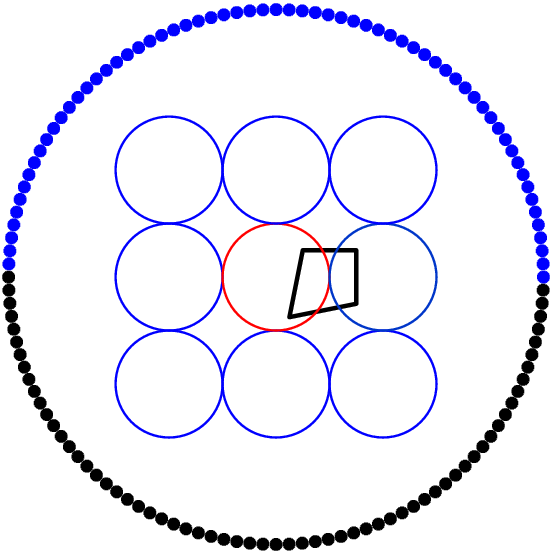}}
  \subfigure[$r=1/2$]{
  \includegraphics[width=0.18\textwidth]{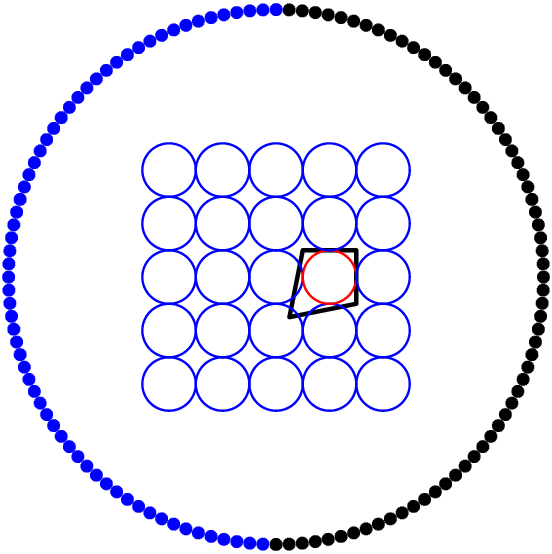}}
  \subfigure[$r=1/4$]{
  \includegraphics[width=0.18\textwidth]{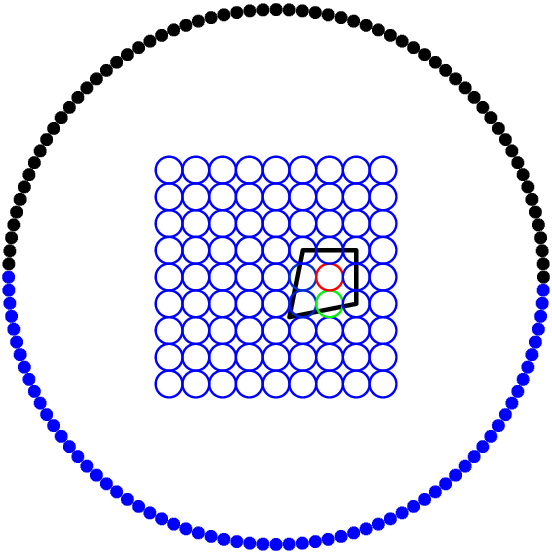}}
  \subfigure[$r=1/4$]{
  \includegraphics[width=0.18\textwidth]{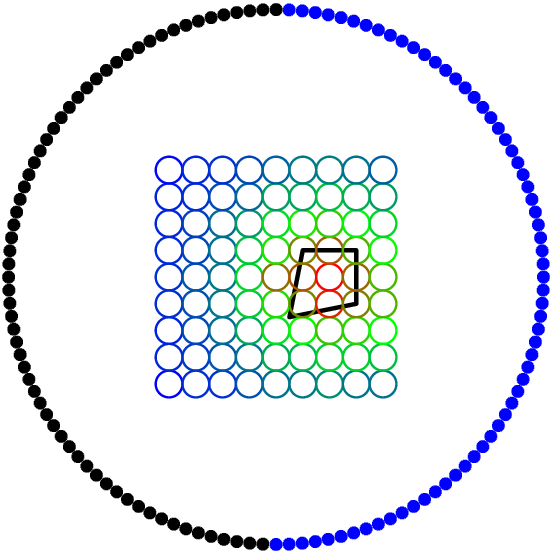}}
  \subfigure[$r=1/8$]{
  \includegraphics[width=0.18\textwidth]{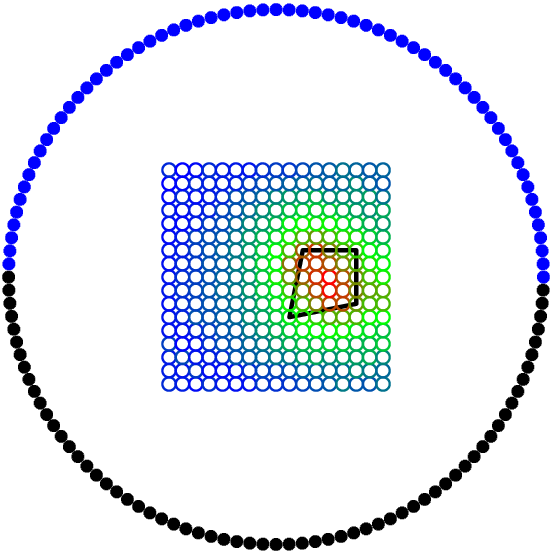}}
  \caption{Numerical results for Example \ref{Ex4}.
  The black line represents $\pa D$, the circle with knots represents $\pa B$.
  The boundary value $f$ takes value $0$, $1$, and $2$ at gray, black, and blue knots on $\pa B$, respectively.
  The colored circles represent $\pa\Om_P^{(\ell)}$ for different $P$ and $\ell$ with its color indicating the value of $\widetilde I_2(\Om_P^{(\ell)})$ for (a)--(c) and (f)--(h), and $\ln(\widetilde I_2(\Om_P^{(\ell)}))$ for (d), (e), (i) and (j), respectively, in the sense of (\ref{1029-3}).}\label{tilde_ex4}
\end{figure}

\begin{figure}[htbp]
  \centering
  \subfigure[$r=1$]{
  \includegraphics[width=0.18\textwidth]{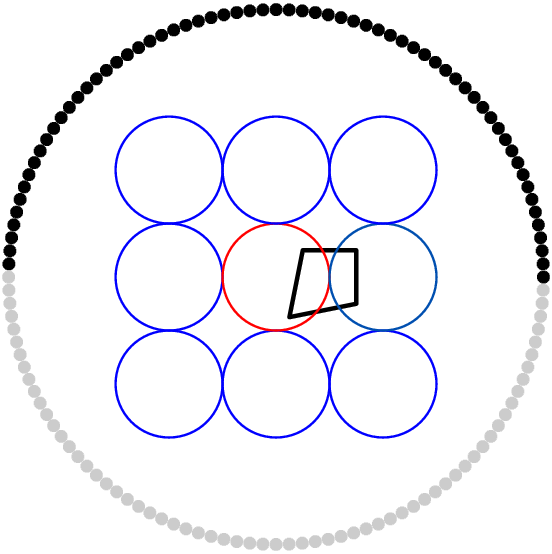}}
  \subfigure[$r=1/2$]{
  \includegraphics[width=0.18\textwidth]{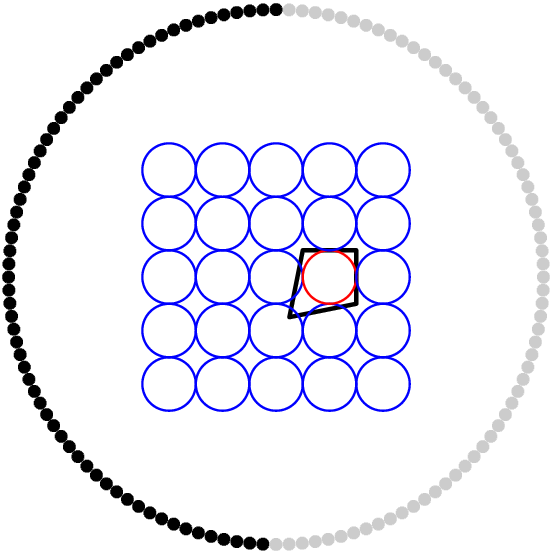}}
  \subfigure[$r=1/4$]{
  \includegraphics[width=0.18\textwidth]{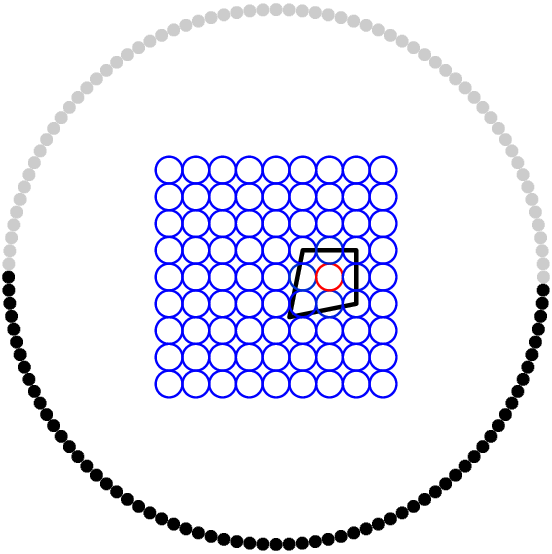}}
  \subfigure[$r=1/4$]{
  \includegraphics[width=0.18\textwidth]{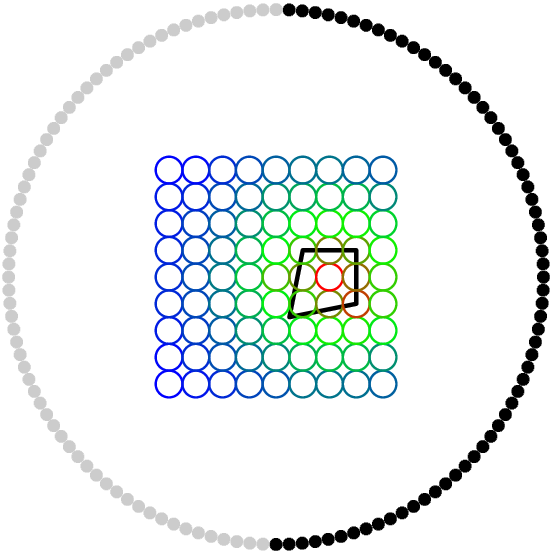}}
  \subfigure[$r=1/8$]{
  \includegraphics[width=0.18\textwidth]{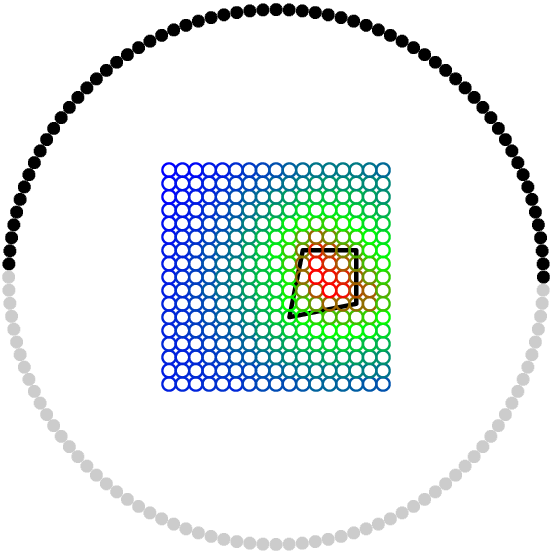}}\\
  \subfigure[$r=1$]{
  \includegraphics[width=0.18\textwidth]{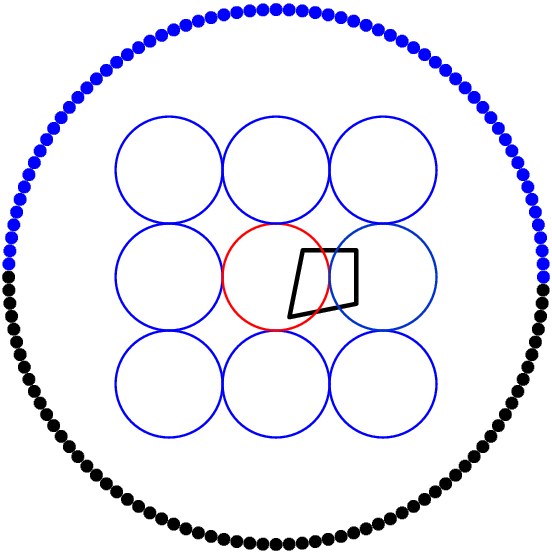}}
  \subfigure[$r=1/2$]{
  \includegraphics[width=0.18\textwidth]{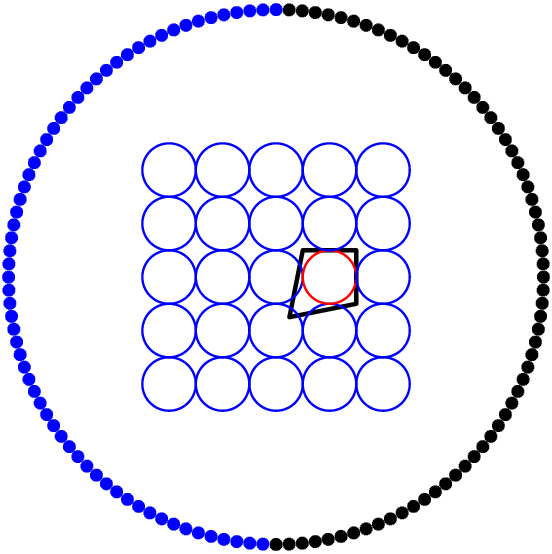}}
  \subfigure[$r=1/4$]{
  \includegraphics[width=0.18\textwidth]{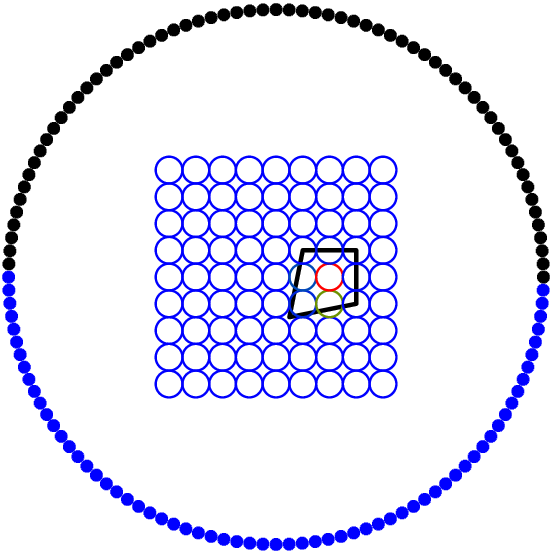}}
  \subfigure[$r=1/4$]{
  \includegraphics[width=0.18\textwidth]{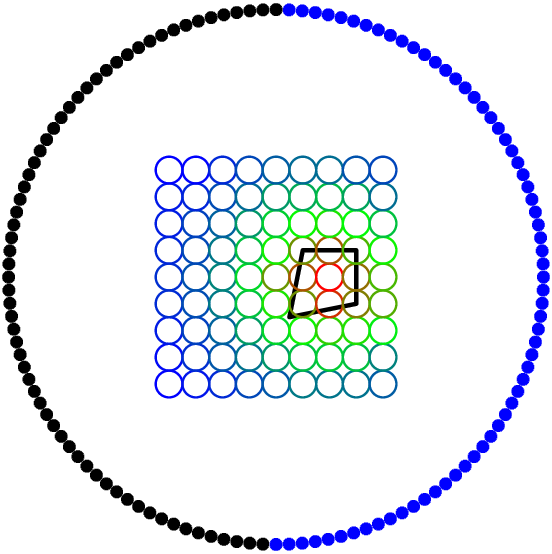}}
  \subfigure[$r=1/8$]{
  \includegraphics[width=0.18\textwidth]{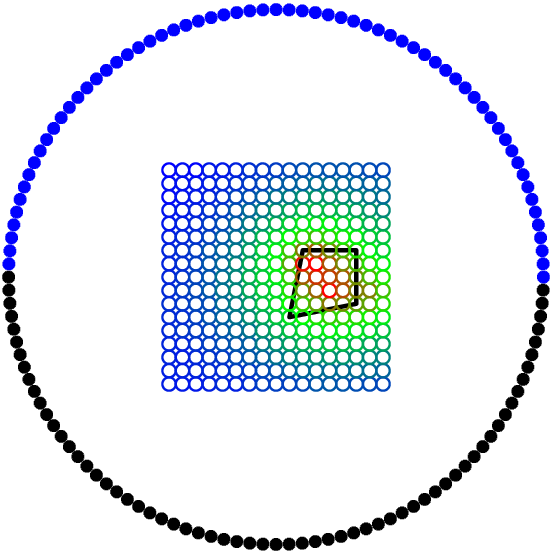}}
  \caption{Numerical results for Example \ref{Ex4}.
  The black line represents $\pa D$, the circle with knots represents $\pa B$.
  The boundary value $f$ takes value $0$, $1$, and $2$ at gray, black, and blue knots on $\pa B$, respectively.
  The colored circles represent $\pa\mho_P^{(\ell)}$ for different $P$ and $\ell$ with its color indicating the value of $I_2(\mho_P^{(\ell)})$ for (a)--(c) and (f)--(h), and $\ln(I_2(\mho_P^{(\ell)}))$ for (d), (e), (i) and (j), respectively, in the sense of (\ref{1029-3}).}\label{exmedium4}
\end{figure}

\begin{figure}[htbp]
  \centering
  \subfigure[$r=1$]{
  \includegraphics[width=0.18\textwidth]{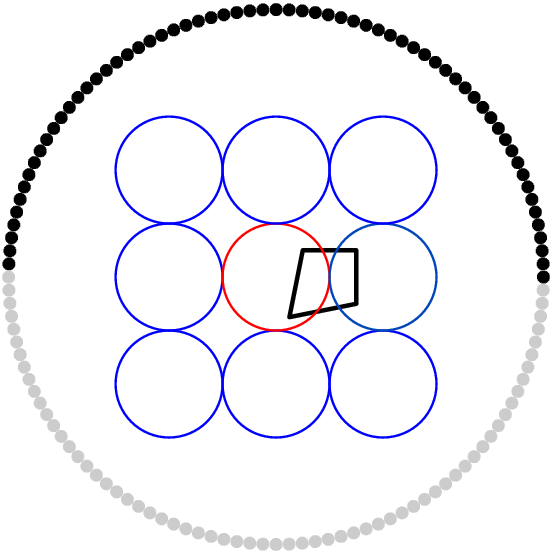}}
  \subfigure[$r=1/2$]{
  \includegraphics[width=0.18\textwidth]{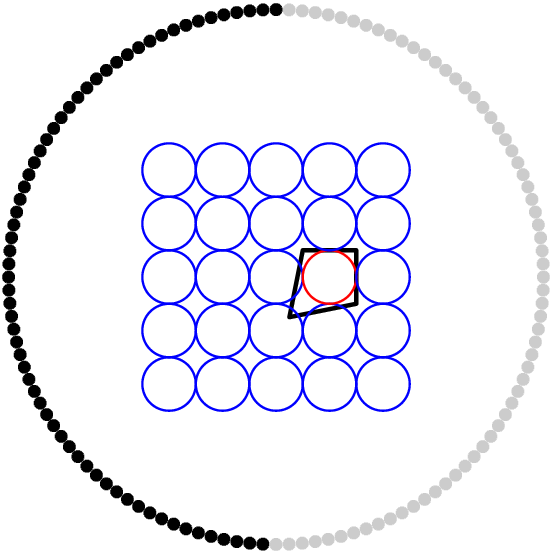}}
  \subfigure[$r=1/4$]{
  \includegraphics[width=0.18\textwidth]{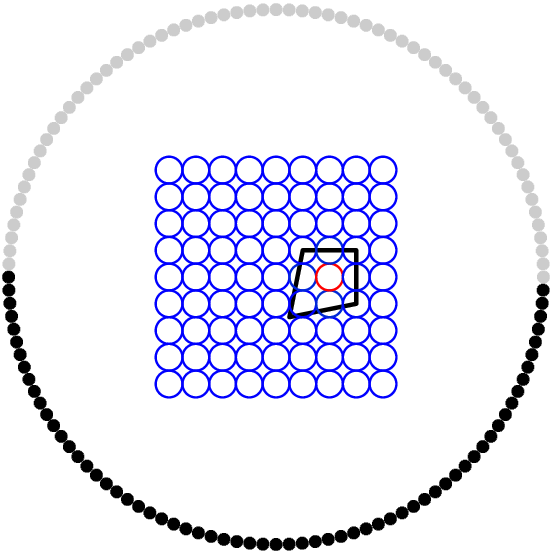}}
  \subfigure[$r=1/4$]{
  \includegraphics[width=0.18\textwidth]{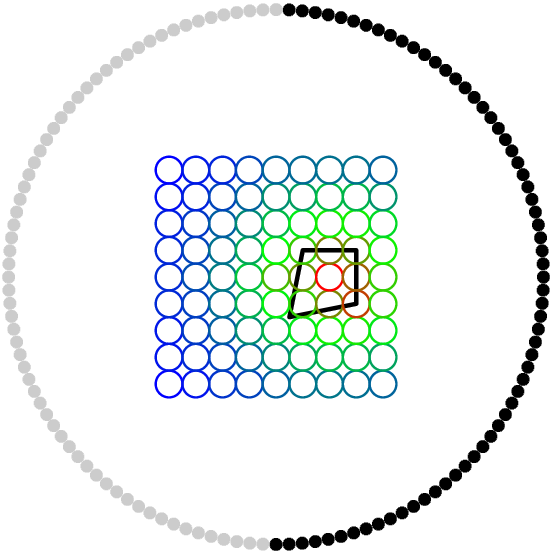}}
  \subfigure[$r=1/8$]{
  \includegraphics[width=0.18\textwidth]{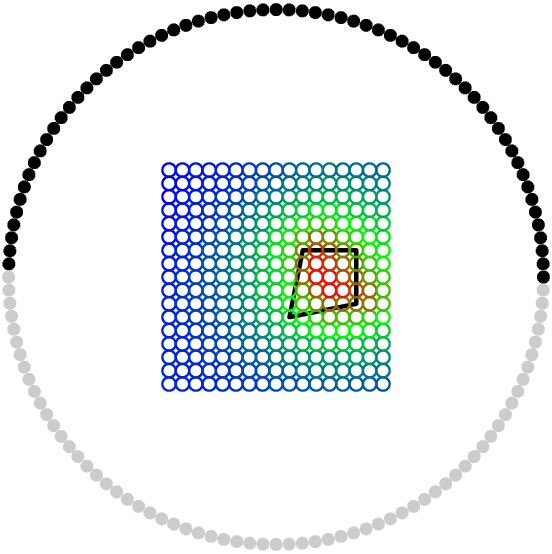}}\\
  \subfigure[$r=1$]{
  \includegraphics[width=0.18\textwidth]{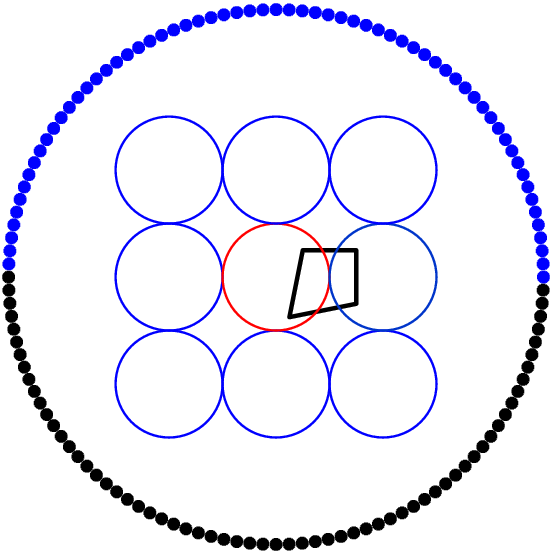}}
  \subfigure[$r=1/2$]{
  \includegraphics[width=0.18\textwidth]{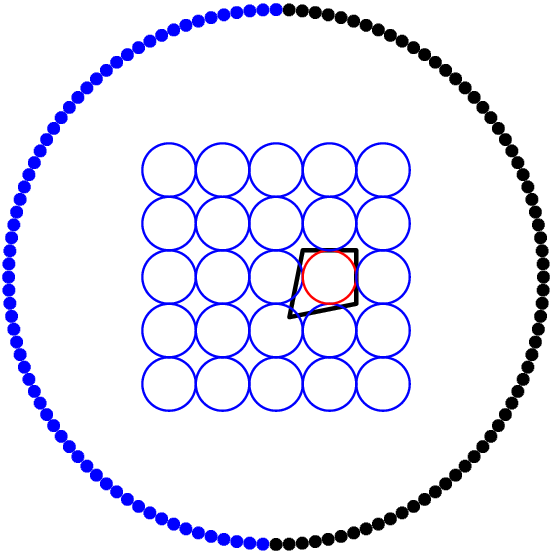}}
  \subfigure[$r=1/4$]{
  \includegraphics[width=0.18\textwidth]{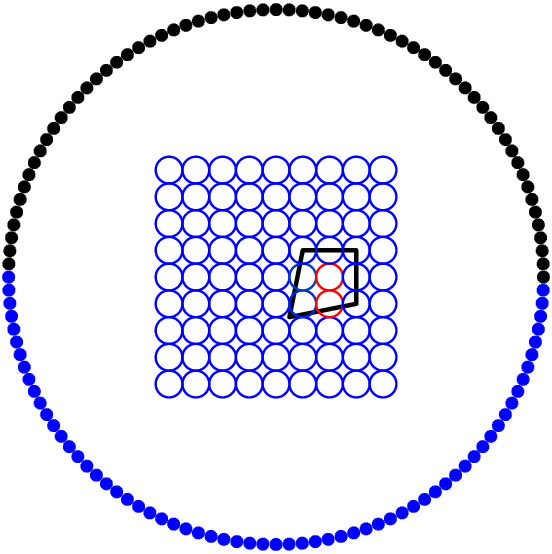}}
  \subfigure[$r=1/4$]{
  \includegraphics[width=0.18\textwidth]{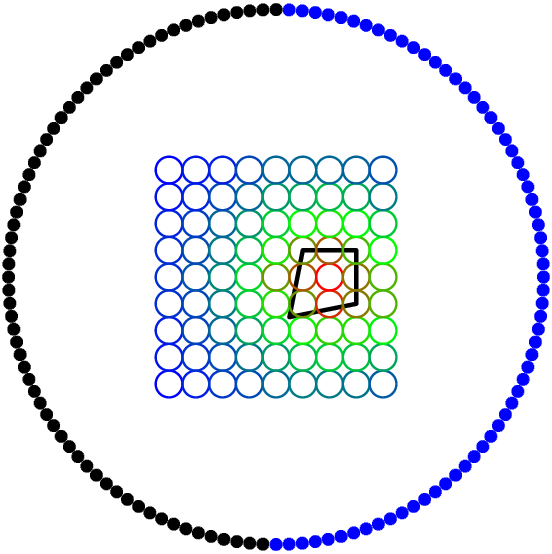}}
  \subfigure[$r=1/8$]{
  \includegraphics[width=0.18\textwidth]{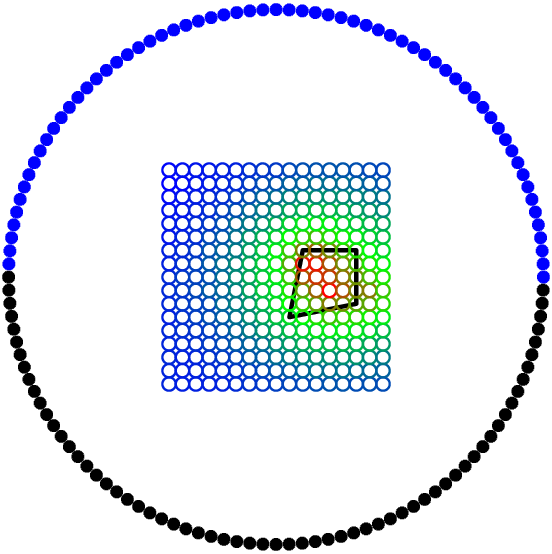}}
  \caption{Numerical results for Example \ref{Ex4}.
  The black line represents $\pa D$, the circle with knots represents $\pa B$.
  The boundary value $f$ takes value $0$, $1$, and $2$ at gray, black, and blue knots on $\pa B$, respectively.
  The colored circles represent $\pa\mho_P^{(\ell)}$ for different $P$ and $\ell$ with its color indicating the value of $\widetilde I_2(\mho_P^{(\ell)})$ for (a)--(c) and (f)--(h), and $\ln(\widetilde I_2(\mho_P^{(\ell)}))$ for (d), (e), (i) and (j), respectively, in the sense of (\ref{1029-3}).}\label{tilde_exmedium4}
\end{figure}

We are not able to give a rigourous explanation to the sampling scheme in Example \ref{Ex4}.
Intuitively, we think in some sense the distance between ${\rm Ran}\,A(k^2,\Om)$ and ${\rm Ran}\,A(k^2,D)$ (or the distance between ${\rm Ran}\,A(k^2,\Om)$ and $\pa_\nu u|_{\pa B}$ with $u$ solving (\ref{-1})--(\ref{-3})) is close provided $\Om$ is close to $D$.
We guess this property also holds with $\Om$ replaced by $\mho$.

\section{Conclusion}\label{s5}
\setcounter{equation}{0}

In this paper, we have proved uniqueness results of an inverse boundary value problem to determine the constant coefficients $\sigma$ and $q$ as well as the Dirichlet polygon $D$ from a single pair of Cauchy data under some a priori assumptions on $D$ and the Dirichlet boundary value $f$ on $\pa B$.
We also propose a sampling method to simultaneously reconstruct the constant coefficients $\sigma$ and $q$, and the rough location and convex hull of the polygon $D$.
{\x Obviously, the numerical results can be improved by iteration methods.}
All these numerical methods are based on a single pair of Cauchy data.
It remains interesting extend this method to the case when $D$ consists of several connect components.

\section*{Acknowledgements}

The work of Xiaoxu Xu is partially supported by National Natural Science Foundation of China grant 12201489, Shaanxi Fundamental Science Research Project for Mathematics and Physics (Grant No.23JSQ025), Young Talent Fund of Association for Science and Technology in Shaanxi, China (Grant No.20240504), the Young Talent Support Plan of Xi'an Jiaotong University, the Fundamental Research Funds for the Central Universities grant xzy012022009.
The work of Guanghui Hu is partially supported by National Natural Science Foundation of China grant 12071236 and the Fundamental Research Funds for Central Universities in China grant 63213025.

\end{document}